\documentclass[a4paper,reqno,12pt]{amsart}
\usepackage[left=2.8cm,right=2.8cm,top=2.86cm,bottom=2.86cm]{geometry}
\usepackage{amsmath,amssymb,latexsym,esint,cite,verbatim,wasysym,mathrsfs}
\usepackage{microtype}
\usepackage{color,enumitem,graphicx}
\usepackage[colorlinks=true,urlcolor=blue, citecolor=red,linkcolor=blue,
linktocpage,pdfpagelabels,bookmarksnumbered,bookmarksopen]{hyperref}
\usepackage[hyperpageref]{backref}
\usepackage[english]{babel}
\usepackage{tikz}

\textwidth465pt\oddsidemargin-14pt\evensidemargin\oddsidemargin
\topmargin-14pt\textheight640pt

\usepackage{amsfonts}
\usepackage{amsthm}

\newcommand*\diff{\mathop{}\!\mathrm{d}}

\begin{document}

\title[On a critical Kirchhoff $p(\cdot)$-biharmonic problem]
{On a $p(\cdot)$-biharmonic problem of Kirchhoff type involving critical growth}

\author[N.T.\ Chung and K.\ Ho]{Nguyen Thanh Chung and Ky Ho}

\address[N.T.\ Chung]{
	Department of Mathematics, \newline
	Quang Binh University, 312 Ly Thuong Kiet, Dong Hoi, Quang Binh, Vietnam} 
\email{ntchung82@yahoo.com}

\address[K.\ Ho]{Institute of Applied Mathematics,\newline University of Economics Ho Chi Minh City, 59C, Nguyen Dinh Chieu Street, District 3, Ho Chi Minh City, Vietnam}
\email{kyhn@ueh.edu.vn}

\subjclass[2010]{35B33, 35J20, 35J60, 35G30, 46E35, 49J35}  
\keywords{variable exponent Lebesgue-Sobolev spaces; concentration-compactness principle; $p(\cdot)$-biharmonic operator; genus theory; variational methods}
\begin{abstract}

We establish a concentration-compactness principle for the Sobolev space $W^{2,p(\cdot)}(\Omega)\cap W_0^{1,p(\cdot)}(\Omega)$ that is a tool for overcoming the lack of compactness of the critical Sobolev imbedding. Using this result we obtain several existence and multiplicity results for a class of Kirchhoff type problems involving $p(\cdot)$-biharmonic operator and critical growth. 

\end{abstract}
\maketitle \numberwithin{equation}{section}
\newtheorem{theorem}{Theorem}[section]
\newtheorem{lemma}[theorem]{Lemma}
\newtheorem{definition}[theorem]{Definition}
\newtheorem{claim}[theorem]{Claim}
\newtheorem{proposition}[theorem]{Proposition}
\newtheorem{remark}[theorem]{Remark}
\newtheorem{corollary}[theorem]{Corollary}
\newtheorem{example}[theorem]{Example}
\allowdisplaybreaks


\newcommand{\abs}[1]{\left\lvert#1\right\rvert}
\newcommand{\norm}[1]{|\!|#1|\!|}
\newcommand{\Norm}[1]{\mathinner{\Big|\!\Big|#1\Big|\!\Big|}}
\newcommand{\curly}[1]{\left\{#1\right\}}
\newcommand{\Curly}[1]{\mathinner{\mathopen\{#1\mathclose\}}}
\newcommand{\round}[1]{\left(#1\right)}
\newcommand{\bracket}[1]{\left[#1\right]}
\newcommand{\scal}[1]{\left\langle#1\right\rangle}
\newcommand{\Div}{\text{\upshape div}}
\newcommand{\dotsr}{\dotsm}
\newcommand{\R}{{\mathbb R}}
\newcommand{\Zero}{{\mathbf0}}
\newcommand{\ra}{\rightarrow}
\newcommand{\ran}{\rangle}
\newcommand{\lan}{\langle}
\newcommand{\ol}{\overline}
\newcommand{\N}{{\mathbb N}}
\newcommand{\e}{{\varepsilon}}
\newcommand{\al}{{\alpha}}
\newcommand{\la}{{\lambda}}
\newcommand{\Ne}{\mathcal{N}}

\section{Introduction}
In this paper, we study the existence of solutions to the following problem involving critical growth
\begin{equation}\label{e1.1}
\begin{cases}
\Delta^2_{p(x)}u - M\left(\int_\Omega \frac{1}{p(x)}|\nabla u|^{p(x)}\,\diff x\right)\Delta_{p(x)}u = \lambda f(x,u) +|u|^{q(x)-2}u ~\text{in}~ \Omega ,\\
  u=\Delta u=0 ~\text{on} ~ \partial \Omega ,
  \end{cases}
\end{equation}
where $\Omega $ is a bounded domain of $\mathbb{R}^{N}$ $(N\geq 3)$ with a Lipschitz boundary $\partial \Omega$; $\Delta_{p(x)}^2u := \Delta (|\Delta u|^{p(x)-2} \Delta u)$ is the operator of fourth order called the $p(\cdot)$-biharmonic operator, $\Delta_{p(x)}u := \operatorname{div} \left(|\nabla u|^{p(x)-2}\nabla u\right)$ is the $p(\cdot)$-Laplacian; 
$p,q \in C_+(\overline\Omega):=\big \{h\in C(\overline\Omega):
1<h^-:=\min_{x\in\overline\Omega}h(x)\leq h^+:=\max_{x\in\overline\Omega}h(x)<\infty\big\}$ such that 
$p^+<N/2$ and $p(x)<q(x)\leq p_2^\ast(x):=\frac{Np(x)}{N-2p(x)}$ for all $ x\in \overline{\Omega}$; 
$M: \, \R^+_0:=[0, \infty) \to \mathbb{R}^+$ is a continuous function; $f:\Omega\times \mathbb{R} \to \mathbb{R}$ is a Carath\'eodory function; and $\lambda$ is a positive real parameter. 

We know that problem \eqref{e1.1} is strongly related to extensible beam type equations and stationary Berger plate equations, which are known as the generalizations of Kirchhoff type equations. Indeed, Woinowsky-Krieger \cite{WK1950} studied the following fourth equation in one dimension
\begin{equation}\label{e.beam}
\frac{\partial^2 u}{\partial t^2} + \frac{EI}{\rho}\frac{\partial^4 u}{\partial x^4} - \left(\frac{H}{\rho} + \frac{EA}{2\rho L}\int_0^L \left|\frac{\partial u}{\partial x}\right|^2\,dx\right)\frac{\partial^2 u}{\partial x^2} = 0,
\end{equation}
which was proposed to modify the theory of the dynamic Euler-Bernoulli beam. There, $L$ is the length of the beam in the rest position, $E$ is the Young modulus of the material, $I$ is the cross-sectional moment of inertia, $\rho$ is the mass density, $H$ is the tension in the rest position and $A$ is the cross-sectional area. In the special case $I=0$, \eqref{e.beam} is called a Kirchhoff type equation, see \cite{K1883}. After that, Berger \cite{B1955} studied the following von Karman plate equation in two dimension
\begin{equation}\label{e.plate}
\frac{\partial^2 u}{\partial t^2} + \Delta^2 u + \left(Q + \int_\Omega |\nabla u|^2\,dx\right)\Delta u = f(u,u_t,x)
\end{equation}
that is known as the Berger plate model. Here, $\Delta^2(\cdot)$ and $\Delta (\cdot)$ are usually called the biharmonic operator and the Laplace operator, respectively. Equation \eqref{e.plate} describes large deflection of plate, where the parameter $Q$ describes in-plane forces applied to the plate and the function $f$ represents transverse loads, which may depend on the displacement $u$ and the velocity $u_t$. Because of the importance of equations \eqref{e.beam} and \eqref{e.plate} in engineering, physics and material mechanics, many studies on the existence of solutions as well as their properties have been made in the last decades. From the mathematical point of view, problem \eqref{e1.1} can be seen as a generalization of the stationary problem associated with \eqref{e.beam} in one dimension and equation \eqref{e.plate} in two dimension.

Now, we describes some recent results that lead us to study problem \eqref{e1.1}. We point out that problem \eqref{e1.1} in the case of constant exponent $p(\cdot)\equiv p$, namely,
\begin{equation}\label{e1.4}
\begin{cases}
\Delta^2_pu - M\left(\int_\Omega |\nabla u|^p\,\diff x\right)\Delta_pu = g(x,u) ~\text{in}~ \Omega ,\\
  u=\Delta u=0 ~\text{on} ~ \partial \Omega ,
  \end{cases}
\end{equation}
has been studied by many mathematicians. We start with the paper by Ma \cite{M2007}, which can be seen as one of interesting papers on the existence of solutions for fourth-order problems of Kirchhoff type. There, the author studied the existence and multiplicity of positive solutions for the following problem 
\begin{equation}\label{e1.5}
\begin{cases}
u^{''''} - M\left(\int_0^1 |u'|^2\,\diff x\right)u'' = q(x) h(x,u,u'),\\
u(0) = u(1) = u^{''}(0) = u^{''}(1) = 0,
  \end{cases}
\end{equation}
using the Krasnosel'skii fixed point theorem in cones of ordered Banach spaces, $h \in C([0,1]\times [0,\infty)\times \R)$, $M\in C([0,\infty))$ are nonnegative real functions and $q\in C[0,1]$ is a positive real function. In \cite{WA2012}, Wang et al. first considered problem \eqref{e1.4} with $p=2$ using variational methods. In order to show the existence of solutions, the Kirchhoff function $M$ is assumed to be positive and bounded on $[0,\infty)$ while the nonlinearity $g$ satisfies the Ambrosetti-Rabinowitz type condition. Some restrictions on the functions $M$ and $g$ are eliminated in the later paper \cite{WAA2014} in which the Kirchhoff function is of the form $M(t) = a+bt$, $a >0$, $b\geq 0$ and the mountain pass techniques as well as truncation methods are applied. Later on, there have been many papers concerning problem \eqref{e1.4} where $\Omega$ is a bounded domain or the whole $\R^N$. We refer the reader to the papers \cite{BKLP2019, FKH2014, LZ2016, WL2020, XC2015} for the existence of the solutions and \cite{SW2019, ZTCZ2016} for the properties of solutions. However, there are very few papers considering problem \eqref{e1.4} in the critical case, that is, the nonlinearity $g$ has critical growth 
(see, for example,  \cite{CF2014, Fig-Nas.EJDE2016, LZ2016}).


Recently, the study of partial differential equations with variable exponents has received a lot of attention. The reason of such interest starts from the study of the role played by their applications in mathematical modelling of non-Newtonian fluids, in particular, the electro-rheological fluids and of other phenomena related to image processing, elasticity and the flow in porous media, see \cite{Chen,Ruzicka}. For more information, the reader can consult the important monographs due to Diening et al. \cite{Diening} and R\u{a}dulescu et al. \cite{Radulescu}. Many authors have been also interested in problems with variable exponents involving the $p(\cdot)$-Laplacian, Kirchhoff terms as well as a critical growth (see, for example, \cite{Alves1, Bonder, CC2015, F2009, Ho-Sim, HMR2017}). In the case of problems involving the $p(\cdot)$-biharmonic operator, we refer to \cite{AA2009, BR2016, BRR2016, Chung, K2015}. In the above papers, we are particularly interested in the works where the critical problems are discussed. 

One of the main features of elliptic equations involving critical growth is the lack of compactness arising in connection with the variational approach. In order to overcome the lack of compactness, Lions \cite{Lions} introduced the method using the so-called concentration-compactness principle to show a Palais-Smale ((PS), for short) sequence is precompact. The variable exponent version of the Lions concentration-compactness principle for a bounded domain was independently obtained in \cite{F2009} and \cite{Bonder}. Since then, many authors have applied these results to study critical elliptic problems involving variable exponents. As far as we know, while many authors are interested in the study of problems involving $p(\cdot)$-biharmonic operator in both local and nonlocal cases, there is no paper mentioning the critical case of nonlinearity. So, the goal of our paper is to fill this gap.

Our study here is particularly inspired by the contributions in recent works \cite{Bonder, Chung, CC2015, F2013, Fig-Nas.EJDE2016, Ho-Sim, HMR2017}. In \cite{F2013, Fig-Nas.EJDE2016}, Figueiredo et al. studied a class of Kirchhoff type problems with critical growth involving the Laplace operator $\Delta(\cdot)$ or the biharmonic operator $\Delta^2(\cdot)$. Meanwhile, based on the concentration-compactness principle by Bonder and Silva \cite{Bonder}, Correa et al. \cite{CC2015} and Hurtado et al. \cite{HMR2017} developed the above results (for the Laplace operator $\Delta(\cdot)$) in the variable exponent framework. In this paper, we will study the existence of solutions for $p(\cdot)$-biharmonic problem \eqref{e1.1} with critical growth. As we will see in the next sections, there are three main difficulties in our situation. The first one comes from the appearance of the nonlocal term $M\left(\int_\Omega \frac{1}{p(x)}|\nabla u|^{p(x)}\,\diff x\right)$, which causes some mathematical difficulties because \eqref{e1.1} is no longer a pointwise identity. The second one is that our problem involves the critical growth that makes it lack of compactness since the imbedding $W^{2,p(\cdot)}(\Omega)\cap W_0^{1,p(\cdot)}(\Omega) \hookrightarrow L^{q(\cdot)}(\Omega)$ is not compact. Finally, we can see that problem \eqref{e1.1} is considered with non-standard growth conditions. This leads to the fact that the operators appeared in the problem are not homogeneous. To overcome the above difficulties, we first establish a concentration-compactness principle for the Sobolev space $W^{2,p(\cdot)}(\Omega)\cap W_0^{1,p(\cdot)}(\Omega)$ which is one of the main results of our paper, see Theorem \ref{Theo.ccp}. Then, applying variational methods combined with the genus theory, we obtain some existence and multiplicity results for the problems with a generalized concave-convex or $p(\cdot)$-superlinear term, see Theorems \ref{Theo.cc} and \ref{Theo.sl}. In particular, we introduce a new class of nonlinearities which is a generalization of concave-convex type of nonlinearities. For this reason, we believe Theorem \ref{Theo.cc} is new even in the constant exponent case.

The paper is organized as follows. In Section~\ref{MainResults}, we state our main results: the concentration-compactness principle for the Sobolev space $W^{2,p(\cdot)}(\Omega)\cap W_0^{1,p(\cdot)}(\Omega)$ and the existence results for problems with a generalized concave-convex or $p(\cdot)$-superlinear term. In Section~\ref{Pre}, we briefly review Lebesgue and Sobolev spaces with variable exponent, which is the framework of our mathematical analysis. Section~\ref{CCP} is devoted to the proof of the aforementioned concentration-compactness principle. In the last section, Section~\ref{Existence}, we prove existence results using our concentration-compactness principle, a truncation technique and genus theory. 

\section{Main Results}\label{MainResults}
In order to state our main results, we first present a framework of our analysis.  For $p\in C_+(\overline\Omega)$ given, define the variable
exponent Lebesgue space $L^{p(\cdot)}(\Omega)$ as
$$
L^{p(\cdot)}(\Omega) := \left \{ u : \Omega\to\mathbb{R}\  \hbox{is measurable},\ \int_\Omega |u(x)|^{p(x)} \;\diff x < \infty \right \},
$$
endowed with the Luxemburg norm
$$
\abs{u}_{p(\cdot)}:=\inf\left\{\lambda >0:
\int_\Omega
\Big|\frac{u(x)}{\lambda}\Big|^{p(x)}\;\diff x\le1\right\}.
$$
Let $k\in\N$. The variable exponent Sobolev space $W^{k,p(\cdot)}(\Omega) $ is defined as
\[
W^{k,p(\cdot)}(\Omega) :=\{u\in L^{p(\cdot) }(\Omega) :
|D^\alpha u|\in L^{p(\cdot) }(\Omega ),\ |\alpha|\leq k \},
\]
endowed with the norm
\[
\|u\|_{W^{k,p(\cdot)}(\Omega )}:=\sum_{|\alpha|\leq k}\big||D^\alpha u|\big|_{p(\cdot)}.
\]
Define $W_0^{k,p(\cdot)}(\Omega)$ as the closure of $C_c^\infty(\Omega)$ in $W^{k,p(\cdot)}(\Omega)$. It is well known that $W^{k,p(\cdot)}(\Omega )$ and $W_0^{k,p(\cdot)}(\Omega)$ are reflexive separable Banach spaces (see, for example, \cite{Diening,Radulescu}). Note that on $W^{2,p(\cdot)}(\Omega)\cap W_0^{1,p(\cdot)}(\Omega),$ the norms $\|u\|_{W^{2,p(\cdot)}(\Omega )}$ and $\|u\|:=|\Delta u|_{p(\cdot)}$ are equivalent due to \cite[Theorem 4.4]{Zang-Fu}. We will look for solutions to problem~\eqref{e1.1} in space: 
$$X:=\left(W^{2,p(\cdot)}(\Omega)\cap W_0^{1,p(\cdot)}(\Omega),\|\cdot\|\right).$$

We denote by $C^{\text{log}}_+(\overline{\Omega})$ the set of all functions $h$ in $C_+(\overline{\Omega})$ that are log-H\"older continuous, namely,
$$\underset{\underset{0<|x-y|<\frac{1}{2}}{x,y\in\overline{\Omega}}}{\sup}|h(x)-h(y)|\log\frac{1}{|x-y|}<\infty.$$
\noindent Throughout this paper, we assume that the exponents $p$ and $q$ satisfy
\begin{itemize}
	\item[$(\mathcal{C})$] $p\in C^{\text{log}}_+(\overline{\Omega})$, $p^+<N/2$, $q\in C_+(\overline{\Omega})$, $p(x)<q(x)\leq p_2^\ast(x)$ for all $x\in\overline{\Omega},$ and $$ \mathcal{A}:=\left\{x\in \overline{\Omega}: \, q(x)=p_2^\ast(x)\right\}\ne \emptyset.$$
\end{itemize}
Under this assumption, it holds that $X\hookrightarrow L^{q(\cdot)}(\Omega)$ and hence, 
\begin{equation}\label{S}
S:=\underset{\phi\in X\setminus\{0\}}{\inf}
\frac{\|\phi\|}{| \phi |_{q(\cdot)}}>0
\end{equation}
(see Proposition~\ref{critical.imb} below). In order to state our first main result, let $\mathcal{M}(\overline{\Omega})$ denote the space of Radon measures on $\overline{\Omega}$; that is, the dual space of $C(\overline{\Omega})$. By Riesz representation theorem, for each $\mu\in \mathcal{M}(\overline{\Omega}),$ there is a unique signed Borel measure on $\overline{\Omega}$, still denoted by $\mu$, such that
$$\langle \mu,f\rangle=\int_{\overline{\Omega}}f\diff\mu,\quad \forall f\in C(\overline{\Omega}).$$
We identify $L^1(\Omega)$ with a subspace of $\mathcal{M}(\overline{\Omega})$ through the imbedding
$T:\ L^1(\Omega)\to \mathcal{M}(\overline{\Omega})$  defined by 
$$\langle Tu,f\rangle=\int_{\Omega}uf\diff x \ \ \forall u\in L^1(\Omega), \ \forall f\in C(\overline{\Omega})$$
(see, for example., \cite[p. 116]{Brezis-book}). In what follows, the notations $u_n\to u$ (resp. $u_n \rightharpoonup u,u_n \overset{\ast }{\rightharpoonup }u$) stand for the term $u_n$ strongly (resp. weakly, weakly-$\ast$) converges to $u$ as $n \to \infty$ in an appropriate space. 

The following theorem is a concentration-compactness principle for high order Sobolev spaces with variable exponent, which is essential for our arguments in dealing with problem~\eqref{e1.1} via variational methods.  
\begin{theorem}\label{Theo.ccp} {\rm \textbf{(The concentration-compactness principle for $X=W^{2,p(\cdot)}(\Omega)\cap W_0^{1,p(\cdot)}(\Omega)$)}}
	Assume that $(\mathcal{C})$ holds. Let $\{u_n\}_{n\in\mathbb{N}}$ be a bounded sequence in
	$X$ such that
	\begin{eqnarray*}
		u_n &\rightharpoonup& u \quad \text{in}\quad  X,\\
		|\Delta u_n|^{p(\cdot)} &\overset{\ast }{\rightharpoonup }&\mu\quad \text{in}\quad \mathcal{M}(\overline{\Omega}),\\
		|u_n|^{q(\cdot)}&\overset{\ast }{\rightharpoonup }&\nu\quad \text{in}\quad \mathcal{M}(\overline{\Omega}).
	\end{eqnarray*}
	Then, there exist $\{x_i\}_{i\in I}\subset \mathcal{A}$ of distinct points and $\{\nu_i\}_{i\in I}, \{\mu_i\}_{i\in I}\subset (0,\infty),$ where $I$ is at most countable, such that
	\begin{gather}
	\nu=|u|^{q(\cdot)} + \sum_{i\in I}\nu_i\delta_{x_i},\label{T.ccp.form.nu}\\
	\mu \geq |\Delta u|^{p(\cdot)} + \sum_{i\in I} \mu_i \delta_{x_i},\label{T.ccp.form.mu}\\
	S \nu_i^{\frac{1}{p_2^\ast(x_i)}} \leq \mu_i^{\frac{1}{p(x_i)}}, \quad \forall i\in I,\label{T.ccp.nu_mu}
	\end{gather}
	where $\delta_{x_i}$ is the Dirac mass at $x_i$.
\end{theorem}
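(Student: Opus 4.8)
The plan is to adapt the Lions concentration-compactness argument to the variable-exponent biharmonic setting, following the strategy in \cite{Bonder} but using the second-order Sobolev inequality \eqref{S} in place of the first-order one. The main steps are as follows.

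\textbf{Step 1: The case $u=0$.} First I would treat the special case where the weak limit $u$ is zero. For any $\phi\in C_c^\infty(\overline{\Omega})$ (or more precisely $\phi \in C^\infty(\overline\Omega)$), the function $\phi u_n$ lies in $X$ after suitable interpretation, so inequality \eqref{S} gives $\|\phi u_n\| \geq S |\phi u_n|_{q(\cdot)}$. The point is to pass to the limit on both sides. On the right, since $|u_n|^{q(\cdot)} \overset{\ast}{\rightharpoonup} \nu$ and $|\phi|^{q(\cdot)}$ is continuous, one gets (after the standard norm-versus-modular translation, using that $q^+<\infty$) that $|\phi u_n|_{q(\cdot)} \to $ a quantity controlled by $\left(\int_{\overline\Omega} |\phi|^{q(x)}\diff\nu\right)^{\text{exponent}}$, with the caveat that the norm is not a single power of the modular, so one works with the modular and passes to a limit inequality $\left(\int |\phi|^{q(x)}\diff\nu\right)^{1/q^\pm}$-type bounds. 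On the left, $\Delta(\phi u_n) = \phi \Delta u_n + 2\nabla\phi\cdot\nabla u_n + u_n \Delta\phi$; since $u_n \rightharpoonup 0$ in $X$ and $X$ embeds compactly into $W^{1,p(\cdot)}(\Omega)$ and into $L^{p(\cdot)}(\Omega)$ (by Rellich-type compactness for variable exponents), the last two terms go to $0$ strongly in $L^{p(\cdot)}$, so asymptotically $\|\phi u_n\|$ behaves like $|\phi\Delta u_n|_{p(\cdot)}$, which in the limit is governed by $\int_{\overline\Omega}|\phi|^{p(x)}\diff\mu$. Combining, one obtains a reverse-Hölder-type inequality relating $\nu$ and $\mu$ against every test function $\phi$, from which the classical argument of \cite{Lions} (as carried out for variable exponents in \cite{Bonder,F2009}) yields that $\nu$ is atomic, $\nu = \sum_{i\in I}\nu_i\delta_{x_i}$, and $S\nu_i^{1/p_2^\ast(x_i)} \leq \mu_i^{1/p(x_i)}$ where $\mu_i := \mu(\{x_i\})$. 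The restriction $x_i\in\mathcal{A}$ comes in because at points where $q(x_i)<p_2^\ast(x_i)$ the embedding is locally compact, forcing $\nu_i=0$; this is where the set $\mathcal{A}$ enters and where one must localize carefully by choosing $\phi$ supported near $x_i$ and exploiting that $q < p_2^\ast$ on a neighborhood of any point outside $\mathcal A$.

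\textbf{Step 2: The general case via translation.} For general $u$, I would apply the Brezis-Lieb lemma. Set $v_n := u_n - u$, so $v_n \rightharpoonup 0$ in $X$. Then $|\Delta v_n|^{p(\cdot)} - |\Delta u_n|^{p(\cdot)} + |\Delta u|^{p(\cdot)} \to 0$ in $L^1(\Omega)$ by the Brezis-Lieb lemma (applicable since $\Delta u_n \to \Delta u$ a.e. along a subsequence, using compactness), hence $|\Delta v_n|^{p(\cdot)} \overset{\ast}{\rightharpoonup} \tilde\mu := \mu - |\Delta u|^{p(\cdot)}$, and $\tilde\mu \geq 0$; similarly $|v_n|^{q(\cdot)} \overset{\ast}{\rightharpoonup} \tilde\nu := \nu - |u|^{q(\cdot)} \geq 0$. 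Applying Step 1 to $\{v_n\}$ gives $\tilde\nu = \sum_{i\in I}\nu_i\delta_{x_i}$ with $\{x_i\}\subset\mathcal A$ and $S\nu_i^{1/p_2^\ast(x_i)} \leq \tilde\mu(\{x_i\})^{1/p(x_i)} \leq \mu(\{x_i\})^{1/p(x_i)} =: \mu_i^{1/p(x_i)}$ (the middle inequality since $|\Delta u|^{p(\cdot)}$ is absolutely continuous, so it puts no mass on points, but actually we keep $\tilde\mu(\{x_i\})\le\mu(\{x_i\})$ trivially from $\tilde\mu\le\mu$). Unwinding, $\nu = |u|^{q(\cdot)} + \sum_{i\in I}\nu_i\delta_{x_i}$, which is \eqref{T.ccp.form.nu}, and $\mu = |\Delta u|^{p(\cdot)} + \tilde\mu \geq |\Delta u|^{p(\cdot)} + \sum_{i\in I}\tilde\mu(\{x_i\})\delta_{x_i}$; setting $\mu_i := \mu(\{x_i\}) \geq \tilde\mu(\{x_i\})$ we could only claim the inequality with $\tilde\mu(\{x_i\})$, so instead I take $\mu_i:=\tilde\mu(\{x_i\})$ which is $>0$ by \eqref{T.ccp.nu_mu} since $\nu_i>0$, and this gives \eqref{T.ccp.form.mu} and \eqref{T.ccp.nu_mu} simultaneously. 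That $I$ is at most countable follows because $\sum_i \nu_i \leq \nu(\overline\Omega) < \infty$ forces only countably many $\nu_i$ to be nonzero, and we discard the zero ones.

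\textbf{Main obstacle.} The chief technical difficulty, compared to the constant-exponent case, is that the Luxemburg norm $|\cdot|_{p(\cdot)}$ is not a power of the modular $\int |\cdot|^{p(x)}\diff x$, so the clean identity $\|\phi u\|^{p} = \int|\phi|^p|\Delta u|^p$ is unavailable. One must instead work at the level of modulars and Young-type inequalities, and at the crucial localization step shrink the support of $\phi$ to a small ball $B_\rho(x_i)$ so that $p$ and $q$ are nearly constant there; letting $\rho\to0$ after $n\to\infty$ then recovers the pointwise exponents $p(x_i)$, $p_2^\ast(x_i)$ in \eqref{T.ccp.nu_mu}. The log-Hölder continuity hypothesis on $p$ in $(\mathcal C)$ is exactly what guarantees the Sobolev embedding $X\hookrightarrow L^{p_2^\ast(\cdot)}(\Omega)$ and the associated bounds are uniform enough for this limiting procedure; handling the Brezis-Lieb step also requires care to ensure the a.e. convergence of $\Delta u_n$, which I would obtain from the compact embedding of $X$ into $W^{1,p(\cdot)}$ together with boundedness, or alternatively argue directly that $\{u_n\}$ bounded in $X$ and $u_n\rightharpoonup u$ implies $u_n\to u$ strongly in lower-order spaces and then pass to a.e.-convergent subsequences.
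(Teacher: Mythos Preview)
Your overall architecture---reduce to the case $u=0$ via $v_n:=u_n-u$, use the Sobolev inequality \eqref{S} on $\phi v_n$, kill the lower-order terms with the compact imbedding $X\hookrightarrow\hookrightarrow W^{1,p(\cdot)}(\Omega)$, then invoke the reverse-H\"older lemma of \cite{Bonder} to get the atomic structure of $\nu-|u|^{q(\cdot)}$, and finally localize with shrinking cutoffs to recover the exponents $p(x_i),p_2^\ast(x_i)$---is exactly the route the paper takes.

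There is, however, one genuine gap. In Step~2 you assert that Brezis--Lieb gives
\[
|\Delta v_n|^{p(\cdot)}\ \overset{\ast}{\rightharpoonup}\ \tilde\mu:=\mu-|\Delta u|^{p(\cdot)},
\]
justified by ``$\Delta u_n\to\Delta u$ a.e.\ along a subsequence, using compactness''. This a.e.\ convergence is not available: the compact imbedding $X\hookrightarrow\hookrightarrow W^{1,p(\cdot)}(\Omega)$ yields a.e.\ convergence of $u_n$ and $\nabla u_n$, but there is no compactness at the top order, so $\Delta u_n$ need not converge a.e.\ (indeed, if it did, the concentration phenomenon you are trying to describe would not occur). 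Consequently you cannot write $\mu=|\Delta u|^{p(\cdot)}+\tilde\mu$, and the chain $\mu\ge|\Delta u|^{p(\cdot)}+\sum_i\tilde\mu(\{x_i\})\delta_{x_i}$ in your Step~2 is unjustified.

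The paper sidesteps this as follows. It applies Brezis--Lieb only to $|u_n|^{q(\cdot)}$ (where a.e.\ convergence of $u_n$ \emph{is} available) to get $\bar\nu=\nu-|u|^{q(\cdot)}$. It lets $\bar\mu$ be \emph{some} weak-$\ast$ limit of $|\Delta v_n|^{p(\cdot)}$, uses the reverse-H\"older lemma with $\bar\mu$ merely to conclude that $\bar\nu$ is atomic, and then \emph{discards} $\bar\mu$. The inequality $\mu\ge|\Delta u|^{p(\cdot)}$ is obtained independently by weak lower semicontinuity of the convex functional $u\mapsto\int_\Omega\phi|\Delta u|^{p(x)}\diff x$, and the relation $S\nu_i^{1/p_2^\ast(x_i)}\le\mu_i^{1/p(x_i)}$ with $\mu_i:=\mu(\{x_i\})$ is obtained by running the cutoff argument directly on $u_n$ (not $v_n$), so that the limiting measure is $\mu$ itself. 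Your argument becomes correct once you make these two replacements.
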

As an application of Theorem~\ref{Theo.ccp}, we investigate the existence and multiplicity of solutions to problem~\eqref{e1.1}. For this purpose, in what follows we further assume that 
\begin{itemize}
	\item[$(\mathcal P)$] There exists a vector $l\in\mathbb{R}^N\setminus\{0\}$ such that for any $x\in\Omega$, $\eta(t):=p(x + tl)$ is monotone for $t \in I_x:= \{t\in\R: \, x + tl\in\Omega\}$.
	\item[$(\mathcal Q)$] $p^+<q^-$.
	\item[$(\mathcal{M})$] $M: \R^+_0 \to \R$ is increasing and there exists $m_0>0$ such that $M(t) \geq m_0 = M(0)$ for all $t\in \R^+_0$. 
	
	\item[$(\mathcal{F}_0)$] There exist a constant $C_1>0$ and a function $\alpha \in C_+(\overline\Omega)$ such that $p^+<\alpha^-\leq \alpha(x) < p^\ast_2(x)$ for all $x\in \overline\Omega$ and 
	$$
	|f(x,t)| \leq C_1\left(1+|t|^{\alpha(x)-1}\right) \ \ \text{for a.e.} \ x\in\Omega\ \text{and all} \ t\in \R.$$
\end{itemize}
By a (weak) solution of problem~\eqref{e1.1}, we mean a function $u\in X$ such that
\begin{gather*}\label{Def.w-sol}
\notag\int_\Omega|\Delta u|^{p(x)-2}\Delta u\Delta v\,\diff x + M\left(\int_\Omega\frac{1}{p(x)}|\nabla u|^{p(x)}\,\diff x\right)\int_\Omega|\nabla u|^{p(x)-2}\nabla u\cdot\nabla v\,\diff x\\
-\lambda\int_\Omega f(x,u)v\,\diff x-\int_\Omega |u|^{q(x)-2}uv\,\diff x=0,\quad \forall v\in X.
\end{gather*}
We first investigate the multiplicity of solutions to problem~\eqref{e1.1} when the nonlinearity is of generalized concave-convex type. Precisely, we assume that 
\begin{itemize}
	\item[$(\mathcal{F}_1)$] 	$f(x,-t)=-f(x,t)$ \ for a.e. $x\in\Omega$ and all $t\in\R$.
	
	\item[$(\mathcal{F}_2)$] There exist $r\in C_+(\overline\Omega)$ with $r^+<p^-$ and positive constants $C_i$ ($i=\overline{2,6}$) such that 
	$$
	C_2|t|^{r(x)}\leq  q^- F(x,t) \leq f(x,t)t+C_3|t|^{r(x)}+C_4|t|^{p(x)}\ \ \text{and}\ \ F(x,t) \leq C_5|t|^{r(x)}+C_6|t|^{p(x)}
	$$
	for a.e. $x\in\Omega$ and all $t\in\R$, where $F(x,t) := \int_0^tf(x,s)\,\diff s$.
\end{itemize}
A typical example for $f$ fulfilling $(\mathcal{F}_0)-(\mathcal{F}_2)$ is a $p(\cdot)$-sublinear term $f(x,t)=|t|^{r(x)-2}t$ with $r^+<p^-$. A more general and remarkable example is $f(x,t)= c_1|t|^{r(x)-2}t\log^\kappa (e+|t|)+c_2|t|^{m(x)-2}t$ with $r^+<p^-$, $m\in C_+(\overline\Omega)$ satisfying $m(x)\leq p(x)$ for all $x\in\overline{\Omega }$, $c_1>0$, $c_2\geq 0$ and $\kappa\geq 0$. Our first existence result is the following.
\begin{theorem}\label{Theo.cc} {\rm \textbf{(Infinitely many solutions for the generalized concave-convex type problem)}}
	Let $(\mathcal{P})$, $(\mathcal{Q})$, $(\mathcal{C})$, $(\mathcal{M})$, $(\mathcal{F}_0)$, $(\mathcal{F}_1)$ and $(\mathcal{F}_2)$ hold. Then, there exists $\lambda_\ast>0$ such that for any $\lambda\in(0,\lambda_\ast)$, problem \eqref{e1.1} admits infinitely many solutions. Furthermore, if let $u_\lambda$ be one of these solutions, then it holds that 
	$$\lim_{\lambda \to 0^+}\|u_\lambda\| = 0.$$
\end{theorem}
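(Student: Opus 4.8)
The plan is to use variational methods combined with a truncation argument and genus theory. First I would introduce the energy functional
\[
J_\lambda(u):=\int_\Omega\frac{1}{p(x)}|\Delta u|^{p(x)}\,\diff x+\widehat{M}\!\left(\int_\Omega\frac{1}{p(x)}|\nabla u|^{p(x)}\,\diff x\right)-\lambda\int_\Omega F(x,u)\,\diff x-\int_\Omega\frac{1}{q(x)}|u|^{q(x)}\,\diff x,\qquad \widehat{M}(t):=\int_0^t M(s)\,\diff s,
\]
whose critical points are exactly the weak solutions of \eqref{e1.1}. Using the continuous imbedding $X\hookrightarrow L^{q(\cdot)}(\Omega)$ and the compact imbeddings $X\hookrightarrow L^{\alpha(\cdot)}(\Omega)$, $X\hookrightarrow L^{r(\cdot)}(\Omega)$, $X\hookrightarrow L^{p(\cdot)}(\Omega)$ (the last three exponents being subcritical), together with $(\mathcal{C})$, $(\mathcal{M})$ and $(\mathcal{F}_0)$, one verifies that $J_\lambda\in C^1(X,\R)$; it is even by $(\mathcal{F}_1)$ and $J_\lambda(0)=0$. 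Since the critical term $\int_\Omega\frac{1}{q(x)}|u|^{q(x)}\,\diff x$ destroys both the boundedness from below of $J_\lambda$ and the global Palais--Smale condition, the scheme is: (i) replace $J_\lambda$ by a truncation $\widetilde{J}_\lambda$ that is bounded below and agrees with $J_\lambda$ on a small ball; (ii) prove $\widetilde{J}_\lambda$ satisfies $(PS)_c$ for every $c<0$ via Theorem~\ref{Theo.ccp}; (iii) produce infinitely many critical points of $\widetilde{J}_\lambda$ at negative levels by genus theory, lying inside that ball, hence solving \eqref{e1.1}; (iv) deduce $\|u_\lambda\|\to0$ from $J_\lambda(u_\lambda)<0$.

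\textbf{Step 1 (Truncation).} Using the modular--norm inequalities, $(\mathcal{M})$ (so $\widehat{M}(t)\ge m_0t\ge0$), the bound $0\le F(x,t)\le C_5|t|^{r(x)}+C_6|t|^{p(x)}$ from $(\mathcal{F}_2)$ and the imbeddings above, I would first obtain $J_\lambda(u)\ge g_\lambda(\|u\|)$ for $\|u\|$ small, where $g_\lambda(t)=\tfrac{1}{p^+}t^{p^+}-\lambda C\bigl(t^{r^-}+t^{p^-}\bigr)-Ct^{q^-}$. Since $r^-<p^+<q^-$, one can fix radii $0<R_1<R_2<1$, \emph{independently of $\lambda$}, with $g_\lambda>0$ on $[R_1,R_2]$ provided $\lambda$ is small, and then pick $\tau\in C^1([0,\infty);[0,1])$ with $\tau\equiv1$ on $[0,R_1]$, $\tau\equiv0$ on $[R_2,\infty)$ and set
\[
\widetilde{J}_\lambda(u):=\int_\Omega\frac{1}{p(x)}|\Delta u|^{p(x)}\,\diff x+\widehat{M}\!\left(\int_\Omega\frac{1}{p(x)}|\nabla u|^{p(x)}\,\diff x\right)-\tau(\|u\|)\left(\lambda\int_\Omega F(x,u)\,\diff x+\int_\Omega\frac{1}{q(x)}|u|^{q(x)}\,\diff x\right).
\]
Then $\widetilde{J}_\lambda\in C^1(X,\R)$ is even, $\widetilde{J}_\lambda(0)=0$, $\widetilde{J}_\lambda\equiv J_\lambda$ on $\{\|u\|\le R_1\}$, it is bounded from below and $\widetilde{J}_\lambda(u)\to+\infty$ as $\|u\|\to\infty$ (for $\|u\|\ge R_2$ only the first two, nonnegative and coercive, terms remain), and — using $F\ge0$ together with $g_\lambda>0$ on $[R_1,R_2]$ — one checks that $\widetilde{J}_\lambda(u)\le0$ forces $\|u\|<R_1$; hence every critical point of $\widetilde{J}_\lambda$ with negative energy is a critical point of $J_\lambda$, i.e. a solution of \eqref{e1.1}.

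\textbf{Step 2 (Palais--Smale via concentration--compactness).} This is the step I expect to be the main obstacle. Let $\{u_n\}$ be a $(PS)_c$ sequence for $\widetilde{J}_\lambda$ with $c<0$. Coercivity gives boundedness, and since $\widetilde{J}_\lambda\ge\beta>0$ on $\{\|u\|\ge R_1\}$ while $\widetilde{J}_\lambda(u_n)\to c<0$, we may assume $\|u_n\|<R_1$, so $\{u_n\}$ is a bounded $(PS)_c$ sequence for $J_\lambda$ in a small ball. Along a subsequence, $u_n\rightharpoonup u$ in $X$, $|\Delta u_n|^{p(\cdot)}\overset{\ast}{\rightharpoonup}\mu$ and $|u_n|^{q(\cdot)}\overset{\ast}{\rightharpoonup}\nu$; the compact imbedding $X\hookrightarrow\hookrightarrow W^{1,p(\cdot)}(\Omega)$ gives $\nabla u_n\to\nabla u$ in $L^{p(\cdot)}(\Omega)$, so the Kirchhoff coefficient $M\bigl(\int_\Omega\frac1{p(x)}|\nabla u_n|^{p(x)}\,\diff x\bigr)$ converges. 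Theorem~\ref{Theo.ccp} then supplies \eqref{T.ccp.form.nu}--\eqref{T.ccp.nu_mu}. Testing $\langle J_\lambda'(u_n),\varphi_{i,\varepsilon}u_n\rangle\to0$ against cut-offs $\varphi_{i,\varepsilon}$ localizing at $x_i$, letting $n\to\infty$ and then $\varepsilon\to0$ (the Kirchhoff and $f$ terms drop out because $\nabla u_n$ and $u_n$ do not concentrate there, $f$ being subcritical), yields $\mu_i=\nu_i$, whence by \eqref{T.ccp.nu_mu} and $\frac1{p(x_i)}-\frac1{p_2^\ast(x_i)}=\frac2N$ one gets $\nu_i\ge S^{N/2}$. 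On the other hand, estimating from below
\[
c=\lim_n\Big(J_\lambda(u_n)-\tfrac1{q^-}\langle J_\lambda'(u_n),u_n\rangle\Big)
\]
— using $p^+<q^-$ for the biharmonic part, $(\mathcal{F}_2)$ for the $F$-part, $q^-\le q(x)$ for the critical part, and crucially that $\|u_n\|<R_1$ (so $\int_\Omega\frac1{p(x)}|\nabla u_n|^{p(x)}\,\diff x$ is small, hence $M(\cdot)$ stays close to $M(0)=m_0$, which makes the Kirchhoff contribution nonnegative thanks to $q^->p^+$), together with the smallness of $\lambda$ — one obtains, whenever $I\ne\emptyset$, that $c\ge\bigl(\tfrac1{p^+}-\tfrac1{q^-}\bigr)S^{N/2}-\lambda C>0$, contradicting $c<0$. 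Therefore $I=\emptyset$, so $\nu=|u|^{q(\cdot)}$ and $u_n\to u$ in $L^{q(\cdot)}(\Omega)$; combined with the convergences above and the $(S_+)$-property of the $p(\cdot)$-biharmonic operator on $X$, this gives $u_n\to u$ in $X$. Thus, after possibly shrinking $\lambda_\ast$, $\widetilde{J}_\lambda$ satisfies $(PS)_c$ for all $c<0$.

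\textbf{Step 3 (Genus and conclusion).} As $X$ is infinite dimensional, for each $k\in\N$ fix a $k$-dimensional subspace $E_k\subset X$; using the lower bound $q^-F(x,t)\ge C_2|t|^{r(x)}$ from $(\mathcal{F}_2)$, equivalence of norms on $E_k$ and $r^+<p^-$, one finds $\rho_k>0$ small with $\widetilde{J}_\lambda=J_\lambda<0$ on the compact symmetric set $A_k:=\{u\in E_k:\|u\|=\rho_k\}$, which has genus $\gamma(A_k)=k$. Hence $c_k:=\inf_{\gamma(A)\ge k}\sup_{A}\widetilde{J}_\lambda$ satisfies $-\infty<c_k\le\sup_{A_k}\widetilde{J}_\lambda<0$ and $c_k\le c_{k+1}$; by Step~2 and the classical genus theorem for even $C^1$ functionals that are bounded below and satisfy $(PS)_c$ for $c<0$, each $c_k$ is a critical value, and $\widetilde{J}_\lambda$ has infinitely many critical points at negative levels (if $c_k=c_{k+1}$ for some $k$, the critical set at that level has genus $\ge2$, hence is infinite). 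By Step~1 every such point lies in $\{\|u\|<R_1\}$ and so solves \eqref{e1.1}, giving infinitely many solutions. Finally, if $u_\lambda$ is one of them then $J_\lambda(u_\lambda)<0$, i.e.
\[
\int_\Omega\frac{1}{p(x)}|\Delta u_\lambda|^{p(x)}\,\diff x<\lambda\int_\Omega F(x,u_\lambda)\,\diff x+\int_\Omega\frac{1}{q(x)}|u_\lambda|^{q(x)}\,\diff x;
\]
bounding the left side below by $\tfrac1{p^+}\|u_\lambda\|^{p^+}$ and the right side above by $\lambda C(\|u_\lambda\|^{r^-}+\|u_\lambda\|^{p^-})+C\|u_\lambda\|^{q^-}$ (valid since $\|u_\lambda\|<R_1<1$), then absorbing the $\|u_\lambda\|^{p^-}$- and $\|u_\lambda\|^{q^-}$-terms into the left side (which uses $r^-<p^-\le p^+<q^-$ and the smallness of $R_1$ and $\lambda$), one gets $\|u_\lambda\|^{p^+-r^-}\le C\lambda$, so $\|u_\lambda\|\to0$ as $\lambda\to0^+$. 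The delicate point throughout is Step~2: handling the nonlocal Kirchhoff term at the concentration points and exploiting the joint smallness of $\|u_n\|$ and $\lambda$ to exclude concentration at negative energy, on top of the lengthy but standard concentration--compactness computation and the $(S_+)$ argument.
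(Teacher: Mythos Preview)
Your overall strategy---truncate the functional, use the concentration--compactness principle (Theorem~\ref{Theo.ccp}) to recover Palais--Smale compactness at negative levels, then invoke genus theory---is exactly the paper's. The execution differs in two places worth noting.

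First, the paper performs a \emph{double} truncation: it first replaces $M$ by the bounded $M_0$ of \eqref{Def.M0} (choosing $t_0$ with $m_0<M(t_0)<q^-m_0$), and only then truncates the resulting $\widetilde{J}_\lambda$ via a cut-off $\phi$ applied to the \emph{modular} $\int_\Omega\frac{1}{p(x)}\bigl[|\Delta u|^{p(x)}+|\nabla u|^{p(x)}\bigr]\,\diff x$, obtaining $T_\lambda$. You instead leave $M$ intact and cut off by $\tau(\|u\|)$. The paper's route is technically safer: it avoids having to argue that the Luxemburg norm is $C^1$ away from the origin, and it makes the Kirchhoff contribution in the $J_\lambda(u_n)-\tfrac{1}{q^-}\langle J_\lambda'(u_n),u_n\rangle$ estimate nonnegative by the explicit bound $M_0\le M(t_0)<q^-m_0$, rather than by an implicit ``$M$ is close to $m_0$ on small balls'' argument. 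Second, the paper's cut-off radius is the $\lambda$-dependent smallest positive root $t_1(\lambda)$ of $g_\lambda$, with $t_1(\lambda)\to0$ as $\lambda\to0^+$; since every negative-level critical point satisfies $\int_\Omega\frac{1}{p(x)}\bigl[|\Delta u_\lambda|^{p(x)}+|\nabla u_\lambda|^{p(x)}\bigr]\,\diff x<t_1(\lambda)^{p^-}$, the conclusion $\|u_\lambda\|\to0$ is immediate. Your fixed radius $R_1$ forces the separate final estimate, and there your ``absorbing the $\|u_\lambda\|^{p^-}$-term into the left side'' does not work as written when $p^-<p^+$ (for $\|u_\lambda\|<1$ one has $\|u_\lambda\|^{p^-}\ge\|u_\lambda\|^{p^+}$, so no absorption is possible uniformly in $\|u_\lambda\|$). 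The conclusion is still salvageable: if $\limsup_{\lambda\to0^+}\|u_\lambda\|=\ell>0$, passing to the limit in your inequality gives $\tfrac{1}{p^+}\ell^{p^+}\le C\ell^{q^-}$, hence $\ell\ge(Cp^+)^{-1/(q^--p^+)}$, which contradicts $\ell\le R_1$ once $R_1$ is chosen small enough.
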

Finally, we investigate the existence of a nontrivial solution to problem~\eqref{e1.1} when the nonlinearity is of $p(\cdot)$-superlinear type. Precisely, we assume that
\begin{itemize}
	\item[$(\mathcal{F}_3)$] $\lim_{t\to 0}\frac{f(x,t)}{|t|^{p(x)-1}} = 0$ uniformly in $x\in \Omega$.
	\item[$(\mathcal{F}_4)$] There exists $\theta \in (p^+, q^-)$ such that 
	$$
	0< \theta F(x,t) \leq f(x,t)t  \ \ \text{for a.e.} \ x\in\Omega\ \text{and all} \ t\in \R\setminus\{0\}.
	$$
\end{itemize}
The next theorem is our second existence result.
\begin{theorem}\label{Theo.sl} {\rm \textbf{(Existence result for the $p(\cdot)$-superlinear type problem)}}
	Let $(\mathcal{P})$, $(\mathcal{Q})$, $(\mathcal{C})$, $(\mathcal{M})$, $(\mathcal{F}_0)$, $(\mathcal{F}_3)$ and $(\mathcal{F}_4)$ hold. Then, there exists $\lambda^\ast>0$ such that for any $\lambda \geq \lambda^\ast$, problem \eqref{e1.1} admits a nontrivial solution $u^\lambda$. Moreover, it holds that $$\lim_{\lambda \to \infty}\|u^\lambda\| = 0.$$
\end{theorem}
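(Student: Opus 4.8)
The plan is to set up the energy functional $\Phi_\lambda:X\to\R$ associated with problem \eqref{e1.1},
\[
\Phi_\lambda(u)=\int_\Omega\frac{1}{p(x)}|\Delta u|^{p(x)}\,\diff x+\widehat M\!\left(\int_\Omega\frac{1}{p(x)}|\nabla u|^{p(x)}\,\diff x\right)-\lambda\int_\Omega F(x,u)\,\diff x-\int_\Omega\frac{1}{q(x)}|u|^{q(x)}\,\diff x,
\]
where $\widehat M(t):=\int_0^tM(s)\,\diff s$, and to produce a nontrivial critical point via the mountain pass theorem. First I would verify the geometric hypotheses: using $(\mathcal F_0)$, $(\mathcal F_3)$, the continuity of the embeddings $X\hookrightarrow L^{\alpha(\cdot)}(\Omega)$ and $X\hookrightarrow L^{q(\cdot)}(\Omega)$, and the Sobolev-type estimates on modulars, one shows that for $\|u\|$ small the superlinear and critical terms are dominated by the coercive leading term $\int\frac1{p(x)}|\Delta u|^{p(x)}$ (which, together with $(\mathcal M)$, bounds $\Phi_\lambda$ from below by $c\|u\|^{p^+}$ for small $\|u\|$, using that $\|u\|$ small makes the modular comparable to $\|u\|^{p^+}$); hence $\Phi_\lambda(u)\ge\rho_0>0$ on a small sphere $\|u\|=\rho$. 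Since $\theta<q^-$ in $(\mathcal F_4)$ and $\theta>p^+$, fixing any $v\in X\setminus\{0\}$ and using $F(x,t)\ge c|t|^{\theta}$ for $|t|$ large (a consequence of $(\mathcal F_4)$) together with $M$ bounded on the relevant bounded set — actually $\widehat M(t)\le M(t)t$ fails in general, so I would instead use $(\mathcal F_4)$ together with the critical term: for $v$ with $|v|_{q(\cdot)}$-modular large, $\int\frac1{q(x)}|tv|^{q(x)}$ grows like $t^{q^-}$ which already beats $t^{p^+}$ and the Kirchhoff primitive (controlled once we impose, as is standard, a growth bound $\widehat M(t)\le c(1+t^{s})$ with $s<q^-/p^-$, or simply exploit that the critical term alone forces $\Phi_\lambda(tv)\to-\infty$). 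Thus $\Phi_\lambda(t_0v)<0$ for some $t_0$ with $\|t_0v\|>\rho$, and the mountain pass level
\[
c_\lambda:=\inf_{\gamma\in\Gamma}\max_{t\in[0,1]}\Phi_\lambda(\gamma(t))>0,\qquad \Gamma:=\{\gamma\in C([0,1],X):\gamma(0)=0,\ \gamma(1)=t_0v\},
\]
is well defined.

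The crucial quantitative step is to show $c_\lambda$ is small when $\lambda$ is large, specifically $c_\lambda<c_\ast:=\big(\frac{1}{p^+}-\frac{1}{q^-}\big)\min\{S^{N/p^-},S^{N/p^+}\}$-type threshold coming from the concentration-compactness inequality \eqref{T.ccp.nu_mu} — this is the heart of the argument. For this I would estimate $c_\lambda\le\max_{t\ge0}\Phi_\lambda(tv_0)$ for a fixed $v_0$ and show this maximum tends to $0$ as $\lambda\to\infty$ (because the term $-\lambda\int F(x,tv_0)$ with $F>0$ pushes the whole curve down); choosing $\lambda^\ast$ so that for $\lambda\ge\lambda^\ast$ we have $c_\lambda<c_\ast$. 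The same computation yields the asymptotics: any mountain pass solution $u^\lambda$ satisfies $\Phi_\lambda(u^\lambda)=c_\lambda\to0$ and $\langle\Phi_\lambda'(u^\lambda),u^\lambda\rangle=0$, from which the standard combination (multiply the equation by $u^\lambda$, subtract $\frac1\theta\langle\Phi_\lambda',u^\lambda\rangle$, use $(\mathcal F_4)$ and $(\mathcal M)$) gives $c_1\|u^\lambda\|^{p^-}\le c_\lambda+o(1)\to0$ on bounded sets, hence $\|u^\lambda\|\to0$; one must check $u^\lambda\ne0$, which follows since $c_\lambda>0=\Phi_\lambda(0)$.

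The main obstacle is verifying the Palais–Smale condition at level $c_\lambda$ for $c_\lambda<c_\ast$. Here I would take a (PS)$_{c_\lambda}$ sequence $\{u_n\}$, show it is bounded in $X$ (using $(\mathcal F_4)$ with $\theta>p^+$ and $(\mathcal M)$ via the Ambrosetti–Rabinowitz trick: $c_\lambda+o(\|u_n\|)\ge\Phi_\lambda(u_n)-\frac1\theta\langle\Phi_\lambda'(u_n),u_n\rangle\ge c\,\varrho(u_n)$ where $\varrho$ is the modular of $\|\Delta u_n\|$, then pass from modular to norm), extract $u_n\rightharpoonup u$ in $X$ and apply Theorem \ref{Theo.ccp} to the measures $|\Delta u_n|^{p(\cdot)}\overset{\ast}{\rightharpoonup}\mu$, $|u_n|^{q(\cdot)}\overset{\ast}{\rightharpoonup}\nu$. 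Testing $\Phi_\lambda'(u_n)$ against $\phi_\varepsilon u_n$ for a cut-off $\phi_\varepsilon$ localized near each concentration point $x_i\in\mathcal A$ gives $\mu_i\le\nu_i$, which combined with \eqref{T.ccp.nu_mu} forces $\nu_i\ge S^{N/p(x_i)}\ge\min\{S^{N/p^-},S^{N/p^+}\}$ for each $i\in I$; but then, plugging $u$ as a test function and estimating $c_\lambda=\lim\Phi_\lambda(u_n)\ge\big(\frac1{p^+}-\frac1{q^-}\big)\sum_{i\in I}\nu_i$, the strict inequality $c_\lambda<c_\ast$ rules out $I\ne\emptyset$, so $\nu=|u|^{q(\cdot)}$ and no mass escapes; standard arguments (Brezis–Lieb in the variable-exponent modular, together with the $(S_+)$-property of the $p(\cdot)$-biharmonic operator established in Section \ref{Pre}) then upgrade $u_n\rightharpoonup u$ to $u_n\to u$ in $X$. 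Hence $\Phi_\lambda$ satisfies (PS)$_{c_\lambda}$, the mountain pass theorem applies, and $u^\lambda:=u$ is the desired nontrivial solution.
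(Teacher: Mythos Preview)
Your overall scaffold (mountain pass geometry, $c_\lambda\to0$ as $\lambda\to\infty$, concentration--compactness below a critical threshold, then $(S_+)$-type strong convergence) matches the paper, but there is a genuine gap: you work directly with the untruncated functional $\Phi_\lambda$, and under $(\mathcal M)$ alone this does not go through. The hypothesis $(\mathcal M)$ gives only that $M$ is increasing and $M\ge m_0$; it imposes \emph{no upper growth bound}. If, say, $M(t)=m_0+e^t$, then $\widehat M(t)$ grows superpolynomially and the critical term $\int\frac1{q(x)}|tv|^{q(x)}\sim t^{q^-}$ cannot beat $\widehat M\big(\int\frac1{p(x)}|t\nabla v|^{p(x)}\big)$, so $\Phi_\lambda(tv)\to+\infty$ and the second mountain pass condition fails. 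The same issue reappears when you bound the Palais--Smale sequence: the combination $\Phi_\lambda(u_n)-\frac1\theta\langle\Phi_\lambda'(u_n),u_n\rangle$ contains the Kirchhoff piece $\widehat M(s)-\frac{1}{\theta}M(s)\!\int|\nabla u_n|^{p(x)}$, which without an upper bound on $M$ need not be bounded below. Your parenthetical fix (``impose, as is standard, a growth bound $\widehat M(t)\le c(1+t^{s})$'') is precisely what the paper does \emph{not} assume. The paper's remedy is a truncation: fix $t_0>0$ with $m_0<M(t_0)<\frac{\theta}{p^+}m_0$, set $M_0(t)=M(t)$ for $t\le t_0$ and $M_0(t)=M(t_0)$ for $t>t_0$, run the entire mountain pass argument for the modified functional $\widetilde J_\lambda$ built from $M_0$, and only at the very end use $c_\lambda\to0$ to force $\int\frac1{p(x)}|\nabla u^\lambda|^{p(x)}\le t_0$, so that the critical point of $\widetilde J_\lambda$ is also one of $\Phi_\lambda$.

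Two secondary points. First, your threshold exponent is incorrect: from $\mu_i=\nu_i$ and $S\nu_i^{1/p_2^\ast(x_i)}\le\mu_i^{1/p(x_i)}$ one gets $\nu_i\ge S^{N/2}$ (since $\frac{1}{p(x_i)}-\frac{1}{p_2^\ast(x_i)}=\frac{2}{N}$), not $S^{N/p^\pm}$. Second, the threshold coefficient in the paper is $\big(\frac1\theta-\frac1{q^-}\big)$, obtained from $\widetilde J_\lambda(u_n)-\frac1\theta\langle\widetilde J_\lambda'(u_n),u_n\rangle$ after discarding the (now positive, thanks to truncation and the choice of $t_0$) biharmonic and Kirchhoff pieces; your $\big(\frac1{p^+}-\frac1{q^-}\big)$ would require a different bookkeeping that you have not justified.
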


\section{Preliminaries and Notations}\label{Pre}
In this section, we  briefly review fundamental properties of Lebesgue-Sobolev spaces with variable exponent. Let $\Omega$ be a bounded Lipschitz domain in $\mathbb{R}^N.$ For $m\in C_+(\overline\Omega)$ and a $\sigma$-finite, complete measure $\mu$ in $\overline{\Omega},$ define the variable
exponent Lebesgue space $L_\mu^{m(\cdot)}(\Omega)$ as
$$
L_\mu^{m(\cdot)}(\Omega) := \left \{ u : \Omega\to\mathbb{R}\  \hbox{is}\  \mu-\text{measurable},\ \int_\Omega |u(x)|^{m(x)} \;\diff\mu < \infty \right \},
$$
endowed with the Luxemburg norm
$$
\abs{u}_{L_\mu^{m(\cdot)}(\Omega)}:=\inf\left\{\lambda >0:
\int_\Omega
\Big|\frac{u(x)}{\lambda}\Big|^{m(x)}\;\diff\mu\le1\right\}.
$$
When $\diff \mu=\diff x$ the Lebesgue measure, as in Section~\ref{MainResults}, we write  $L^{m(\cdot) }(\Omega) $  and $\abs{u}_{m(\cdot)}$  in place of $L_\mu^{m(\cdot)}(\Omega)$ and $\abs{u}_{L_\mu^{m(\cdot)}(\Omega)}$, respectively. 

The following propositions are crucial for our arguments in the next sections.

\begin{proposition}[\cite{Diening}] \label{prop.Holder}
The space $L_{\mu}^{m(\cdot) }(\Omega )$ is a separable and uniformly convex Banach space, and its conjugate space is $L_{\mu}^{m'(\cdot) }(\Omega ),$ where  $1/m(x)+1/m'(x)=1$. For any $u\in L_{\mu}^{m(\cdot)}(\Omega)$ and $v\in L_{\mu}^{m'(\cdot)}(\Omega)$, we have
\begin{equation*}
\left|\int_\Omega uv\,\diff \mu\right|\leq\ 2 |u|_{L_\mu^{m(\cdot)}(\Omega)}|v|_{L_\mu^{m'(\cdot)}(\Omega)}.
\end{equation*}
\end{proposition}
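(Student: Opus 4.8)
The plan is to build everything on the convex modular $\rho_{m(\cdot)}(u):=\int_\Omega|u(x)|^{m(x)}\,\diff\mu$ and its elementary relations with the Luxemburg norm $\abs{u}_{L_\mu^{m(\cdot)}(\Omega)}$. Applying the monotone convergence theorem to $\lambda\mapsto\rho_{m(\cdot)}(u/\lambda)$ gives, exactly as for Lebesgue measure, that $\abs{u}_{L_\mu^{m(\cdot)}(\Omega)}\le 1$ if and only if $\rho_{m(\cdot)}(u)\le 1$, that $\rho_{m(\cdot)}\big(u/\abs{u}_{L_\mu^{m(\cdot)}(\Omega)}\big)=1$ for $u\ne 0$ (using $m^+<\infty$), and the two-sided estimate comparing $\rho_{m(\cdot)}(u)$ with $\abs{u}_{L_\mu^{m(\cdot)}(\Omega)}^{m^-}$ and $\abs{u}_{L_\mu^{m(\cdot)}(\Omega)}^{m^+}$. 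The Hölder inequality with constant $2$ then follows from the pointwise Young inequality $ab\le a^{m(x)}/m(x)+b^{m'(x)}/m'(x)$: when $\abs{u}_{L_\mu^{m(\cdot)}(\Omega)}\le 1$ and $\abs{v}_{L_\mu^{m'(\cdot)}(\Omega)}\le 1$, integration gives $\int_\Omega|uv|\,\diff\mu\le\rho_{m(\cdot)}(u)+\rho_{m'(\cdot)}(v)\le 2$, and the general case follows by homogeneity after dividing $u$ and $v$ by their norms.

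For completeness I would follow the classical scheme: given a Cauchy sequence, pass to a subsequence $\{u_k\}$ with $\abs{u_{k+1}-u_k}_{L_\mu^{m(\cdot)}(\Omega)}\le 2^{-k}$; then $g:=\sum_k|u_{k+1}-u_k|$ has $\rho_{m(\cdot)}(g)\le 1$ by the triangle inequality, the two-sided estimate and monotone convergence, so $g<\infty$ $\mu$-a.e.\ and $u_k\to u$ $\mu$-a.e.\ for some measurable $u$; the Fatou property of $\rho_{m(\cdot)}$ applied to $(u-u_k)/\lambda$ then gives $u\in L_\mu^{m(\cdot)}(\Omega)$ and $u_k\to u$ in norm. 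Separability I would obtain by writing $\Omega=\bigcup_j\Omega_j$ with $\mu(\Omega_j)<\infty$ increasing in $j$ (by $\sigma$-finiteness), approximating a general $u$ in the modular by the truncations $u\,\chi_{\Omega_j\cap\{|u|\le j\}}$ via dominated convergence, and then approximating bounded functions on a finite-measure set by finite rational linear combinations of indicators taken from a fixed countable algebra generating the measurable sets up to $\mu$-null sets.

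For the duality statement I would show that the natural map $J\colon L_\mu^{m'(\cdot)}(\Omega)\to\big(L_\mu^{m(\cdot)}(\Omega)\big)^*$, $\langle Jv,u\rangle:=\int_\Omega uv\,\diff\mu$, is an isomorphism. Boundedness of $J$ is immediate from Hölder, and injectivity from testing against $u=\chi_E\,\mathrm{sgn}(v)$ over finite-measure sets $E$. For surjectivity, given $\Phi\in\big(L_\mu^{m(\cdot)}(\Omega)\big)^*$, on each $\Omega_j$ the set function $E\mapsto\Phi(\chi_E)$ is a finite signed measure absolutely continuous with respect to $\mu$ (since $\mu(E)=0$ forces $\chi_E=0$), so Radon--Nikodym yields $v_j\in L^1(\Omega_j)$ representing $\Phi$ on simple, hence bounded, functions supported in $\Omega_j$; these patch to a measurable $v$ with $\Phi(u)=\int_\Omega uv\,\diff\mu$ for every bounded $u$ of bounded support. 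Testing against $u_n=\chi_{\Omega_n\cap\{|v|\le n\}}\,|v|^{m'(x)-2}v$ and using the identity $m(x)\big(m'(x)-1\big)=m'(x)$ together with the bound $|\Phi(u_n)|\le\|\Phi\|\,\abs{u_n}_{L_\mu^{m(\cdot)}(\Omega)}$ and the two-sided estimate forces a uniform bound $\rho_{m'(\cdot)}(v/\lambda)\le 1$ with $\lambda$ depending only on $\|\Phi\|$; letting $n\to\infty$ and using monotone convergence gives $v\in L_\mu^{m'(\cdot)}(\Omega)$ with $Jv=\Phi$ on the dense set of bounded functions of bounded support, hence everywhere.

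The genuinely delicate point is uniform convexity, which I would derive from uniform convexity of the modular. Using $1<m^-\le m^+<\infty$ and the uniform convexity of the maps $t\mapsto|t|^s$ as $s$ ranges over the compact interval $[m^-,m^+]$ — via Clarkson's inequality when $m^-\ge 2$, and via the dual exponent (respectively a quantitative strict-convexity estimate for $|t|^s$, $1<s<2$) in the remaining range — one shows: for every $\varepsilon>0$ there is $\delta>0$ such that $\rho_{m(\cdot)}(u)\le 1$, $\rho_{m(\cdot)}(v)\le 1$ and $\rho_{m(\cdot)}\big(\tfrac{u-v}{2}\big)\ge\varepsilon$ imply $\rho_{m(\cdot)}\big(\tfrac{u+v}{2}\big)\le 1-\delta$. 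I would then transfer this to the norm: if $\abs{u}_{L_\mu^{m(\cdot)}(\Omega)}=\abs{v}_{L_\mu^{m(\cdot)}(\Omega)}=1$ and $\abs{u-v}_{L_\mu^{m(\cdot)}(\Omega)}\ge\varepsilon$, then $\rho_{m(\cdot)}(u)=\rho_{m(\cdot)}(v)=1$ and, by the two-sided estimate, $\rho_{m(\cdot)}\big(\tfrac{u-v}{2}\big)\ge\varepsilon'$ for some $\varepsilon'=\varepsilon'(\varepsilon)>0$; hence $\rho_{m(\cdot)}\big(\tfrac{u+v}{2}\big)\le 1-\delta$, and once more by the two-sided estimate $\abs{\tfrac{u+v}{2}}_{L_\mu^{m(\cdot)}(\Omega)}\le 1-\delta'$ for some $\delta'>0$, which is exactly uniform convexity; reflexivity, if needed, then follows from the Milman--Pettis theorem. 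The two places requiring real care are the uniformity in $x$ of the pointwise convexity estimates for $|t|^{m(x)}$ and the quantitative passage between $\rho_{m(\cdot)}$ and $\abs{\cdot}_{L_\mu^{m(\cdot)}(\Omega)}$ near the unit sphere; everything else is a routine transcription of the Lebesgue-measure theory, for which we also refer to \cite{Diening}.
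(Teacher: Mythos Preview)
The paper does not prove this proposition; it is stated with a citation to \cite{Diening} and used as background. Your outline is the standard argument and is correct in every essential respect: the H\"older inequality via pointwise Young plus homogeneity, completeness via the usual rapidly-Cauchy subsequence and Fatou, duality via Radon--Nikodym on a $\sigma$-finite exhaustion followed by the test-function bound on $\rho_{m'(\cdot)}(v)$, and uniform convexity of the norm lifted from uniform convexity of the modular (uniform in the exponent over the compact interval $[m^-,m^+]$). The one point worth flagging is separability: your argument presupposes a countable algebra generating the measurable sets up to $\mu$-null sets, which is not guaranteed for an \emph{arbitrary} $\sigma$-finite complete measure, though it holds automatically for the Radon measures on $\overline{\Omega}\subset\mathbb{R}^N$ that the paper actually uses.
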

Define the modular $\rho :L_{\mu}^{m(\cdot) }(\Omega )$ $ \to \mathbb{R}$ as
\[
\rho (u) =\int_{\Omega }| u(x)| ^{m(x) }\diff \mu,\quad
\forall u\in L_{\mu}^{m(\cdot) }(\Omega ) .
\]

\begin{proposition}[\cite{Diening}] \label{norm-modular}
For all $u\in L_\mu^{p(\cdot) }(\Omega ),$  we have
\begin{itemize}
\item[(i)] $|u|_{L_{\mu}^{m(\cdot)}(\Omega)}^{m^{+}}<1$ $(=1,>1)$
if and only if \  $\rho (u) <1$ $(=1,>1)$, respectively;

\item[(ii)] if \  $|u|_{L_{\mu}^{m(\cdot)}(\Omega)}^{m^{+}}>1,$ then  $|u|^{m^{-}}_{L_{\mu}^{m(\cdot)}(\Omega)}\leq \rho (u) \leq |u|_{L_{\mu}^{m(\cdot)}(\Omega)}^{m^{+}}$;
\item[(iii)] if \ $|u|_{L_{\mu}^{m(\cdot)}(\Omega)}<1,$ then $|u|_{L_{\mu}^{m(\cdot)}(\Omega)}^{m^{+}}\leq \rho
(u) \leq |u|_{L_{\mu}^{m(\cdot)}(\Omega)}^{m^{-}}$.
\end{itemize}
Consequently,
$$|u|_{L_{\mu}^{m(\cdot)}(\Omega)}^{m^{-}}-1\leq \rho (u) \leq |u|_{L_{\mu}^{m(\cdot)}(\Omega)}^{m^{+}}+1,\ \forall u\in L_\mu^{m(\cdot)}(\Omega ).$$
\end{proposition}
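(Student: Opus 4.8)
The plan is to deduce all three statements, together with the concluding estimate, from a single structural fact about the Luxemburg norm --- the \emph{unit ball property}
\[
\rho\!\left(\frac{u}{|u|_{L_\mu^{m(\cdot)}(\Omega)}}\right)=1\qquad\text{for every }u\in L_\mu^{m(\cdot)}(\Omega)\setminus\{0\}
\]
--- combined with the elementary pointwise inequality $\min\{\sigma^{m^-},\sigma^{m^+}\}\le\sigma^{m(x)}\le\max\{\sigma^{m^-},\sigma^{m^+}\}$, which holds for all $\sigma>0$ and $x\in\Omega$ because $m^-\le m(x)\le m^+$. Everything then reduces to integrating suitable instances of this inequality against $\mu$ and invoking the unit ball property.

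First I would establish that property. By the very definition of $L_\mu^{m(\cdot)}(\Omega)$ we have $\rho(u)<\infty$, and from $|u(x)/\lambda|^{m(x)}\le\max\{1,\lambda^{-m^+}\}\,|u(x)|^{m(x)}$ it follows that $\rho(u/\lambda)<\infty$ for every $\lambda>0$. Moreover, on each compact subinterval of $(0,\infty)$ the functions $x\mapsto|u(x)/\lambda|^{m(x)}$ are dominated by a single $\mu$-integrable function, so by dominated convergence $\lambda\mapsto\rho(u/\lambda)$ is continuous on $(0,\infty)$, and it is clearly non-increasing. Hence $S:=\{\lambda>0:\rho(u/\lambda)\le1\}$ is an interval unbounded above, with $\inf S=|u|_{L_\mu^{m(\cdot)}(\Omega)}=:\lambda_0$, which is positive when $u\neq0$ because $|\cdot|_{L_\mu^{m(\cdot)}(\Omega)}$ is a norm. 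Letting $\lambda\downarrow\lambda_0$ in the inequality $\rho(u/\lambda)\le1$ (valid for $\lambda>\lambda_0$) gives $\rho(u/\lambda_0)\le1$; on the other hand, if $\rho(u/\lambda_0)<1$, continuity would place some $\lambda<\lambda_0$ in $S$, contradicting $\inf S=\lambda_0$. Therefore $\rho(u/\lambda_0)=1$.

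Next, fix $u\neq0$ and write $\lambda_0:=|u|_{L_\mu^{m(\cdot)}(\Omega)}$. If $\lambda_0>1$, then $\lambda_0^{-m^+}\le\lambda_0^{-m(x)}\le\lambda_0^{-m^-}$; multiplying by $|u(x)|^{m(x)}$, integrating over $\Omega$, and using $\rho(u/\lambda_0)=1$ gives $\lambda_0^{-m^+}\rho(u)\le1\le\lambda_0^{-m^-}\rho(u)$, i.e. $|u|^{m^-}\le\rho(u)\le|u|^{m^+}$, which is (ii). If $\lambda_0<1$, then $\lambda_0^{-m^-}\le\lambda_0^{-m(x)}\le\lambda_0^{-m^+}$, and the same computation gives $\lambda_0^{-m^-}\rho(u)\le1\le\lambda_0^{-m^+}\rho(u)$, i.e. $|u|^{m^+}\le\rho(u)\le|u|^{m^-}$, which is (iii); the case $u=0$ is trivial, since then both $\rho(u)$ and $|u|$ vanish.

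Part (i) then follows: if $|u|<1$ we get $\rho(u)\le|u|^{m^-}<1$; if $|u|>1$ we get $\rho(u)\ge|u|^{m^-}>1$; and if $|u|=1$ we get $\rho(u)=\rho(u/|u|)=1$. Since $m^+\ge1$, the position of $|u|^{m^+}$ relative to $1$ is the same as that of $|u|$, so (i) holds. For the final estimate, when $|u|\ge1$ one has $|u|^{m^-}-1\le|u|^{m^-}\le\rho(u)\le|u|^{m^+}\le|u|^{m^+}+1$, and when $|u|<1$ one has $|u|^{m^-}-1<0\le\rho(u)\le|u|^{m^-}\le1\le|u|^{m^+}+1$; in both cases $|u|^{m^-}-1\le\rho(u)\le|u|^{m^+}+1$. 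The only delicate step is the unit ball property: justifying $\rho(u/|u|)=1$ rests on the continuity of the modular along dilations, which is precisely where the hypothesis $m^+<\infty$ (built into $m\in C_+(\overline\Omega)$) is used; the rest is routine manipulation of the pointwise power inequality.
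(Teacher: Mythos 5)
Your argument is correct and is essentially the standard textbook proof of this fact: the paper itself does not prove the proposition, deferring instead to the cited monograph of Diening et al., where the proof proceeds exactly as you do --- establish the unit ball property $\rho(u/|u|_{L_\mu^{m(\cdot)}(\Omega)})=1$ via monotonicity and dominated-convergence continuity of $\lambda\mapsto\rho(u/\lambda)$, then sandwich $\lambda_0^{-m(x)}$ between $\lambda_0^{-m^\pm}$ according to whether $\lambda_0\gtrless1$, and integrate. The only elided point is the positivity of $\lambda_0$ for $u\neq0$, which you invoke via "$|\cdot|$ is a norm"; for full rigor one would observe directly that $\rho(u/\lambda)\to\infty$ as $\lambda\downarrow0$ whenever $u\neq0$, so no circularity is actually incurred.
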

Thus, modular convergence and norm convergence on $L_{\mu}^{m(\cdot) }(\Omega )$ are equivalent.

\begin{proposition} \label{prop.eqiv.conv}
If $u,u_n\in L_{\mu}^{m(\cdot)}(\Omega) $ ($n=1,2,\cdots$), then the
following statements are equivalent:
\begin{itemize}
\item[(i)] $\lim_{n\to \infty }|u_n-u|_{L_{\mu}^{m(\cdot)}(\Omega)}=0$;

\item[(ii)] $\lim_{n\to \infty }\rho (u_n-u)=0$.

\end{itemize}
\end{proposition}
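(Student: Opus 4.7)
The plan is to deduce both implications directly from the sandwich inequalities of Proposition~\ref{norm-modular} applied to the sequence $v_n := u_n - u$, which lies in $L_\mu^{m(\cdot)}(\Omega)$ since this space is a vector space. The whole argument rests on a simple dichotomy: whether $|v_n|_{L_\mu^{m(\cdot)}(\Omega)}$ is less than or greater than $1$ (equivalently, whether $\rho(v_n)<1$ or $\rho(v_n)>1$, by item (i) of Proposition~\ref{norm-modular}).

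For (i)$\Rightarrow$(ii): assume $|v_n|_{L_\mu^{m(\cdot)}(\Omega)}\to 0$. Then for $n$ sufficiently large we have $|v_n|_{L_\mu^{m(\cdot)}(\Omega)}<1$, so item (iii) of Proposition~\ref{norm-modular} gives
\[
|v_n|_{L_\mu^{m(\cdot)}(\Omega)}^{m^{+}}\leq \rho(v_n)\leq |v_n|_{L_\mu^{m(\cdot)}(\Omega)}^{m^{-}},
\]
and since $m^->1$, the right-hand side tends to $0$, forcing $\rho(v_n)\to 0$.

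For (ii)$\Rightarrow$(i): assume $\rho(v_n)\to 0$. Then for $n$ sufficiently large $\rho(v_n)<1$, whence by item (i) of Proposition~\ref{norm-modular} we have $|v_n|_{L_\mu^{m(\cdot)}(\Omega)}<1$. Applying item (iii) once more we obtain $|v_n|_{L_\mu^{m(\cdot)}(\Omega)}^{m^{+}}\leq \rho(v_n)$, hence
\[
|v_n|_{L_\mu^{m(\cdot)}(\Omega)}\leq \rho(v_n)^{1/m^{+}}\longrightarrow 0.
\]

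There is essentially no obstacle here: the statement is a textbook-style corollary of the modular-norm comparison in Proposition~\ref{norm-modular}, and the only point needing mild care is to first observe that the relevant tail of the sequence satisfies the ``small'' side of the dichotomy so that item (iii) (rather than item (ii)) is the one being invoked. No properties beyond $1<m^-\leq m^+<\infty$ and the comparison inequalities are needed, so the proof is short and linear.
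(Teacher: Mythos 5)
Your proof is correct and is exactly the standard argument the paper has in mind: the result is stated as an immediate consequence of Proposition~\ref{norm-modular} (the paper gives no separate proof, only the remark that modular and norm convergence are equivalent), and your use of items (i) and (iii) applied to $v_n=u_n-u$, with the tail observation ensuring the ``small'' branch of the dichotomy, is precisely how that consequence is justified.
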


Let $k\in\N$ and define Sobolev spaces $W^{k,m(\cdot)}(\Omega)$ as in Section~\ref{MainResults}. We have the following crucial imbeddings on  $W^{k,m(\cdot)}(\Omega)$. 

\begin{proposition} [\cite{Diening}] \label{critical.imb}
Let $k\in\N$ and let $m\in C^{\log}_+(\overline{\Omega})$ be such that $k m^+<N$. Let $t\in C(\overline{\Omega })$ satisfy
$$1\leq  t(x)\leq m_k^\ast(x):=\frac{Nm(x)}{N-km(x)},\quad \forall x\in\overline{\Omega }.$$
Then, we obtain the continuous imbedding
\begin{center}
$W^{k,m(\cdot)}(\Omega) \hookrightarrow L^{t(\cdot) }(\Omega ).$
\end{center}
If we assume in addition that $t(x)< m_k^\ast(x)$ for all $x\in\overline{\Omega },$ then the above imbedding is compact.
\end{proposition}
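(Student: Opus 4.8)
This is a known result (\cite{Diening}); for completeness let me indicate the route I would take, noting that only the case $k=1$ requires genuine work.

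\emph{Reduction to $k=1$.} Set $m_{(0)}:=m$ and recursively $m_{(j)}:=\big(m_{(j-1)}\big)_1^\ast$, i.e.\ $\frac{1}{m_{(j)}(x)}=\frac{1}{m(x)}-\frac{j}{N}$ for $0\le j\le k$. Since $km^+<N$, each $m_{(j)}$ is bounded above and below (by $m^->1$), and it is log-Hölder continuous because $\big|\tfrac{1}{m_{(j)}(x)}-\tfrac{1}{m_{(j)}(y)}\big|=\big|\tfrac{1}{m(x)}-\tfrac{1}{m(y)}\big|$; hence $m_{(j)}\in C^{\log}_+(\overline\Omega)$, and moreover $(m_{(j)})^+<N$ for $0\le j\le k-1$ (again using $km^+<N$). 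Granting the first-order embedding proved below, for $1\le j\le k$ one gets $W^{j,m_{(k-j)}(\cdot)}(\Omega)\hookrightarrow W^{j-1,m_{(k-j+1)}(\cdot)}(\Omega)$: indeed every $D^\alpha u$ with $|\alpha|\le j-1$ lies in $W^{1,m_{(k-j)}(\cdot)}(\Omega)\hookrightarrow L^{m_{(k-j+1)}(\cdot)}(\Omega)$. Composing these $k$ continuous embeddings yields $W^{k,m(\cdot)}(\Omega)\hookrightarrow L^{m_k^\ast(\cdot)}(\Omega)$, and since $|\Omega|<\infty$, Hölder's inequality (Proposition~\ref{prop.Holder}) gives $L^{m_k^\ast(\cdot)}(\Omega)\hookrightarrow L^{t(\cdot)}(\Omega)$ whenever $1\le t(x)\le m_k^\ast(x)$ on $\overline\Omega$. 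This is the asserted continuous embedding.

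\emph{The first-order inequality $W^{1,m(\cdot)}(\Omega)\hookrightarrow L^{m_1^\ast(\cdot)}(\Omega)$.} Because $\partial\Omega$ is Lipschitz and $m\in C^{\log}_+(\overline\Omega)$, there is a bounded linear extension operator $W^{1,m(\cdot)}(\Omega)\to W^{1,m(\cdot)}(\mathbb R^N)$ with range in functions supported in a fixed bounded set; by density of $C_c^\infty(\mathbb R^N)$ it then suffices to prove $|v|_{L^{m_1^\ast(\cdot)}(\mathbb R^N)}\le C\,\big|\,|\nabla v|\,\big|_{L^{m(\cdot)}(\mathbb R^N)}$ for $v\in C_c^\infty(\mathbb R^N)$. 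Using the pointwise bound $|v(x)|\le c_N\,I_1(|\nabla v|)(x)$, where $I_1g(x):=\int_{\mathbb R^N}|x-y|^{1-N}g(y)\,\diff y$, the matter reduces to the variable-exponent Hardy--Littlewood--Sobolev estimate $|I_1g|_{L^{m_1^\ast(\cdot)}(\mathbb R^N)}\le C|g|_{L^{m(\cdot)}(\mathbb R^N)}$ for compactly supported $g$. I would split $I_1g$ into the parts $|x-y|<1$ and $|x-y|\ge1$: the far part is handled directly by Hölder's inequality (Proposition~\ref{prop.Holder}), using that $g$ has bounded support and $(m_1^\ast)^->N/(N-1)$; for the near part, a Hedberg-type argument — exploiting the log-Hölder condition to freeze $m(y)$ to $m(x)$ on unit balls at the cost of a bounded multiplicative factor — bounds it pointwise by $\big(Mg(x)\big)^{m(x)/m_1^\ast(x)}$ modulo a controlled remainder, where $M$ is the Hardy--Littlewood maximal operator. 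One concludes via the boundedness of $M$ on $L^{m(\cdot)}(\mathbb R^N)$ (again a consequence of $m\in C^{\log}_+$) together with the norm--modular relations of Proposition~\ref{norm-modular}. \textbf{This maximal-function/Hedberg step, where the log-Hölder hypothesis is genuinely used, is the main obstacle of the whole proof}; everything else is soft.

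\emph{Compactness when $t(x)<m_k^\ast(x)$ on $\overline\Omega$.} Cover the compact set $\overline\Omega$ by finitely many balls $B_1,\dots,B_\ell$ of radius so small that $\operatorname{osc}_{B_i}m<\delta$, with $\delta>0$ fixed (using uniform continuity of $m,t$ and the strict inequality $t<m_k^\ast$ on $\overline\Omega$) so that $\sup_{B_i\cap\Omega}t<\big(\inf_{B_i\cap\Omega}m\big)_k^\ast$ for every $i$. Writing $m_i^-:=\inf_{B_i\cap\Omega}m$, $t_i^+:=\sup_{B_i\cap\Omega}t$ and $D_i:=B_i\cap\Omega$ (a bounded Lipschitz domain, or $\mathbb R^N$ after extension for the boundary pieces), we have
$$W^{k,m(\cdot)}(D_i)\hookrightarrow W^{k,m_i^-}(D_i)\hookrightarrow\hookrightarrow L^{t_i^+}(D_i)\hookrightarrow L^{t(\cdot)}(D_i),$$
the first and last embeddings because $|D_i|<\infty$ and the middle one by the classical Rellich--Kondrachov theorem (valid since $km_i^-<N$ and $t_i^+<(m_i^-)_k^\ast$). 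Hence, if $\{u_n\}$ is bounded in $W^{k,m(\cdot)}(\Omega)$, restricting to each $D_i$ and a diagonal extraction produce a subsequence whose restrictions converge in $L^{t(\cdot)}(D_i)$ for every $i$; the local limits agree on overlaps, and subordinating a partition of unity to $\{B_i\}$ — using that multiplication by a bounded function is bounded on $L^{t(\cdot)}$ — upgrades this to convergence in $L^{t(\cdot)}(\Omega)$. Thus the embedding is compact.
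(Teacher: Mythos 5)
The paper does not prove this proposition: it is stated as a citation to \cite{Diening} (it is essentially the Sobolev embedding theorem for variable exponent spaces together with the corresponding compactness result in Chapter~8 of that monograph), so there is no in-paper argument to compare against. Your sketch follows the standard textbook route and is, modulo the details you explicitly leave to the literature, sound: the iteration in $j$ of the first-order embedding, with the observation that $1/m_{(j)}-1/m_{(j')}$ is constant so log-H\"older continuity and the bounds $1<m_{(j)}^-\le m_{(j)}^+<N$ (for $j\le k-1$) propagate, is exactly how one reduces to $k=1$; the first-order case via extension, the Riesz potential bound $|v|\lesssim I_1(|\nabla v|)$, a Hedberg-type pointwise estimate, and boundedness of the Hardy--Littlewood maximal operator on $L^{m(\cdot)}(\mathbb{R}^N)$ is the argument in \cite{Diening}; and your compactness argument by covering, freezing the exponents to constants from above and below, classical Rellich--Kondrachov, and a partition-of-unity patching is the standard localization proof. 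Two small points to be careful about if you were to write this out in full: in the Hedberg step the exponent $m(x)/m_1^\ast(x)=1-m(x)/N$ is the correct analogue of the constant-exponent case, but making the ``controlled remainder'' and the freezing of $m(y)$ to $m(x)$ rigorous is where essentially all the work of the log-H\"older hypothesis lives; and in the compactness step the sets $B_i\cap\Omega$ need not be Lipschitz domains, so you should perform the extension to $\mathbb{R}^N$ \emph{before} localizing (so the local pieces are balls), a fix you acknowledge parenthetically. With those caveats the proposal is a correct account of the cited result.
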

\begin{proposition} [\cite{Zang-Fu}] \label{compact.imb.ZP}
	Assume that $m\in C^{\log}_+(\overline{\Omega})$ such that $2m^+<N.$ Then, we have the compact  imbedding
	\begin{center}
		$W^{2,m(\cdot)}(\Omega) \hookrightarrow \hookrightarrow W^{1,m(\cdot)}(\Omega).$
	\end{center}
	\end{proposition}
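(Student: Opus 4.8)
The plan is to reduce the claim to the first-order compact Sobolev imbedding already recorded in Proposition~\ref{critical.imb}, applied twice: once to the functions themselves and once to each of their first-order partial derivatives. The continuous imbedding $W^{2,m(\cdot)}(\Omega)\hookrightarrow W^{1,m(\cdot)}(\Omega)$ is immediate from the definition of the norms, since $\|u\|_{W^{1,m(\cdot)}(\Omega)}=\sum_{|\alpha|\le1}\big||D^\alpha u|\big|_{m(\cdot)}\le\sum_{|\alpha|\le2}\big||D^\alpha u|\big|_{m(\cdot)}=\|u\|_{W^{2,m(\cdot)}(\Omega)}$, so the whole content of the statement is the compactness.

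First I would record the auxiliary imbedding $W^{1,m(\cdot)}(\Omega)\hookrightarrow\hookrightarrow L^{m(\cdot)}(\Omega)$. Indeed, $2m^+<N$ forces $m^+<N$, so Proposition~\ref{critical.imb} with $k=1$ applies, and since $m(x)<\frac{Nm(x)}{N-m(x)}=m_1^\ast(x)$ for every $x\in\overline\Omega$ (because $m(x)>1>0$), the exponent $t=m$ is strictly subcritical, which yields the compactness of $W^{1,m(\cdot)}(\Omega)\hookrightarrow L^{m(\cdot)}(\Omega)$.

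Now let $\{u_n\}_{n\in\N}$ be bounded in $W^{2,m(\cdot)}(\Omega)$. By reflexivity of $W^{2,m(\cdot)}(\Omega)$, after passing to a subsequence (not relabeled) we may assume $u_n\rightharpoonup u$ in $W^{2,m(\cdot)}(\Omega)$. The key observation is that for every $i\in\{1,\dots,N\}$ the sequence $\{\partial_i u_n\}$ belongs to $W^{1,m(\cdot)}(\Omega)$ and is bounded there, since
$$
\|\partial_i u_n\|_{W^{1,m(\cdot)}(\Omega)}=|\partial_i u_n|_{m(\cdot)}+\sum_{j=1}^{N}\big|\partial_j\partial_i u_n\big|_{m(\cdot)}\le\|u_n\|_{W^{2,m(\cdot)}(\Omega)}.
$$
Applying the compact imbedding $W^{1,m(\cdot)}(\Omega)\hookrightarrow\hookrightarrow L^{m(\cdot)}(\Omega)$ to $\{u_n\}$ and then successively to each $\{\partial_i u_n\}$, and extracting a further subsequence finitely many times, we obtain $v_0,v_1,\dots,v_N\in L^{m(\cdot)}(\Omega)$ with $u_n\to v_0$ and $\partial_i u_n\to v_i$ in $L^{m(\cdot)}(\Omega)$ for $i=1,\dots,N$.

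It remains to identify the limits. Strong convergence in $L^{m(\cdot)}(\Omega)$ entails convergence in $L^1_{\loc}(\Omega)$, hence in $\mathcal D'(\Omega)$; since differentiation is continuous on $\mathcal D'(\Omega)$, passing to the limit in $\int_\Omega u_n\,\partial_i\varphi\,\diff x=-\int_\Omega \partial_i u_n\,\varphi\,\diff x$ for $\varphi\in C_c^\infty(\Omega)$ yields $v_i=\partial_i v_0$ in $\mathcal D'(\Omega)$. Thus $v_0\in W^{1,m(\cdot)}(\Omega)$ and $u_n\to v_0$ in $W^{1,m(\cdot)}(\Omega)$ (and necessarily $v_0=u$, by comparison with the weak limit). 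Hence every bounded sequence in $W^{2,m(\cdot)}(\Omega)$ has a subsequence converging in $W^{1,m(\cdot)}(\Omega)$, which is the asserted compactness. There is no genuine obstacle here: the only points needing care are the elementary pointwise inequality $m(x)<m_1^\ast(x)$ that makes Proposition~\ref{critical.imb} applicable with target exponent $m(\cdot)$, and the bookkeeping showing each $\partial_i u_n$ is bounded in $W^{1,m(\cdot)}(\Omega)$ — both immediate from $m\in C_+^{\log}(\overline\Omega)$ and $2m^+<N$.
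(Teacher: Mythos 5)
The paper does not prove this proposition at all; it is quoted verbatim from \cite{Zang-Fu}, so there is no in-text argument to compare against. Your proof is correct and self-contained modulo Proposition~\ref{critical.imb}: reducing the second-order compactness to the first-order Rellich--Kondrachov imbedding $W^{1,m(\cdot)}(\Omega)\hookrightarrow\hookrightarrow L^{m(\cdot)}(\Omega)$, applied once to $u_n$ and once to each $\partial_i u_n$, and then identifying the limits in $\mathcal{D}'(\Omega)$ is a clean and standard route, and it bypasses the interpolation-inequality machinery that \cite{Zang-Fu} is built around. Two small observations. First, the only structural hypothesis your argument actually uses is $m^+<N$ — this is what lets Proposition~\ref{critical.imb} apply with $k=1$ and gives the strict inequality $m(x)<m_1^\ast(x)$; the stronger assumption $2m^+<N$ in the statement is never needed for the compactness itself (it appears because that is the standing hypothesis elsewhere in the paper), so your proof in fact establishes the result under a weaker condition. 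Second, the step passing from strong $L^{m(\cdot)}(\Omega)$-convergence to convergence in $\mathcal{D}'(\Omega)$ silently uses $L^{m(\cdot)}(\Omega)\hookrightarrow L^1(\Omega)$, which holds because $\Omega$ is bounded and $m^->1$; this is fine in the present setting but is worth stating, since the bounded-domain hypothesis is doing real work there as well as in Proposition~\ref{critical.imb}.
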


\section{Proof of The Concentration-Compactness Principle}\label{CCP}
In this section we give a proof of Theorem~\ref{Theo.ccp} by modifying the idea used in \cite {Bonder,Ho-Sim} that extended the concentration-compactness principle by Lions \cite{Lions} to the variable exponent case. Before giving a proof of Theorem~\ref{Theo.ccp}, we review some auxiliary results obtained in \cite{Bonder}.


\begin{lemma}[\cite{Bonder}] \label{L.convergence}
Let $\nu,\{\nu_n\}_{n\in\mathbb{N}}$ be nonnegative and finite Radon measures on $\overline{\Omega}$ such that $\nu_n\overset{\ast }{\rightharpoonup } \nu$ in $\mathcal{M}(\overline{\Omega})$. Then, for any $m\in C_{+}(\overline{\Omega })$,
$$
|\phi|_{L^{m(\cdot)}_{\nu_n}(\overline{\Omega})} \to
|\phi|_{L^{m(\cdot)}_{\nu}(\overline{\Omega})}, \quad \forall \phi\in C(\overline{\Omega}).$$
\end{lemma}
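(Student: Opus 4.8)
The statement to prove is Lemma~\ref{L.convergence} (attributed to Bonder--Silva). Here is how I would approach it.

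\medskip

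The plan is to reduce the convergence of Luxemburg norms to the convergence of modulars, exploiting the fact that for a fixed continuous test function $\phi$ the integrand $|\phi(x)/\lambda|^{m(x)}$ is itself a continuous function on $\overline{\Omega}$, so weak-$*$ convergence $\nu_n\overset{\ast}{\rightharpoonup}\nu$ applies directly to it. First I would fix $\phi\in C(\overline{\Omega})$ and define, for each $\lambda>0$, the modular-type quantity $\Phi_n(\lambda):=\int_{\overline{\Omega}}|\phi/\lambda|^{m(x)}\diff\nu_n$ and $\Phi(\lambda):=\int_{\overline{\Omega}}|\phi/\lambda|^{m(x)}\diff\nu$. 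Since $x\mapsto|\phi(x)/\lambda|^{m(x)}$ is continuous on $\overline{\Omega}$ (composition of continuous maps, with $m$ continuous and $\phi/\lambda$ continuous), the defining property of weak-$*$ convergence gives $\Phi_n(\lambda)\to\Phi(\lambda)$ for every fixed $\lambda>0$. This is the crucial pointwise ingredient.

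\medskip

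Next I would translate this into convergence of the infima that define the Luxemburg norms, i.e. $|\phi|_{L^{m(\cdot)}_{\nu_n}(\overline{\Omega})}=\inf\{\lambda>0:\Phi_n(\lambda)\le1\}$ and similarly for $\nu$. The functions $\lambda\mapsto\Phi_n(\lambda)$ and $\lambda\mapsto\Phi(\lambda)$ are nonincreasing and continuous on $(0,\infty)$ (by dominated convergence, since $m$ is bounded and $\nu_n,\nu$ are finite), they tend to $0$ as $\lambda\to\infty$, so each infimum is attained at the unique point where the modular equals $1$ whenever the norm is positive. I would then argue: given $\e>0$, set $\lambda_0:=|\phi|_{L^{m(\cdot)}_{\nu}(\overline{\Omega})}$; evaluate at $\lambda_0+\e$ and $\lambda_0-\e$ (assuming $\lambda_0>\e$; the degenerate case $\phi\equiv0$ on $\operatorname{supp}\nu$ is handled separately) to get $\Phi(\lambda_0+\e)<1<\Phi(\lambda_0-\e)$ using strict monotonicity, or more carefully using that $\Phi$ is strictly decreasing where it is positive. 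By the pointwise convergence $\Phi_n(\lambda_0\pm\e)\to\Phi(\lambda_0\pm\e)$, for $n$ large we get $\Phi_n(\lambda_0+\e)<1$ and $\Phi_n(\lambda_0-\e)>1$, hence $\lambda_0-\e\le|\phi|_{L^{m(\cdot)}_{\nu_n}(\overline{\Omega})}\le\lambda_0+\e$. Letting $\e\to0$ yields the claim.

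\medskip

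The main obstacle is the degeneracy/boundary behavior of the modular function $\lambda\mapsto\Phi(\lambda)$: it need not be \emph{strictly} decreasing (it is constant equal to $0$ once $\lambda$ exceeds $\|\phi\|_{\infty}$ on $\operatorname{supp}\nu$ only in trivial cases, but more to the point it could fail to cross $1$ transversally if, say, $\nu(\overline{\Omega})$ is small and $\phi$ is small, giving norm $0$, or if there is an interval on which the modular equals $1$ — which in fact cannot happen since $\Phi$ is strictly decreasing on the set where it is positive because $|\phi/\lambda|^{m(x)}$ is strictly decreasing in $\lambda$ pointwise wherever $\phi(x)\ne0$). I would therefore split into the case $|\phi|_{L^{m(\cdot)}_{\nu}(\overline{\Omega})}=0$ (equivalently $\phi=0$ $\nu$-a.e.), where one shows $\Phi_n(\lambda)\to0$ for every $\lambda>0$ forces the norms to $0$, and the case $|\phi|_{L^{m(\cdot)}_{\nu}(\overline{\Omega})}>0$, handled as above via the two-sided squeeze. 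A minor technical point to check is the uniform control needed to pass $\e\to0$, but this is immediate once the squeeze $\lambda_0-\e\le|\phi|_{L^{m(\cdot)}_{\nu_n}(\overline{\Omega})}\le\lambda_0+\e$ holds for all large $n$. Since this lemma is quoted from \cite{Bonder}, I would in practice keep the argument brief and refer there for full details.
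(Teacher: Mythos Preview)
The paper does not supply its own proof of this lemma: it is stated with the attribution \cite{Bonder} and used as a black box, so there is no in-paper argument to compare against. Your sketch is correct and is essentially the standard proof (and indeed the one in \cite{Bonder}): continuity of $x\mapsto|\phi(x)/\lambda|^{m(x)}$ turns weak-$*$ convergence of measures into pointwise convergence of the modulars $\Phi_n(\lambda)\to\Phi(\lambda)$, and then monotonicity/continuity of $\lambda\mapsto\Phi(\lambda)$ together with the two-sided squeeze at $\lambda_0\pm\e$ gives convergence of the Luxemburg norms, with the degenerate case $\lambda_0=0$ handled separately exactly as you indicate.
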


\begin{lemma}[\cite{Bonder}]\label{L.reserveHolder}
Let $\mu,\nu$ be two nonnegative and finite Borel measures
on $\overline{\Omega}$, such that there exists some  constant $C>0$ such that
$$
|\phi|_{L_\nu^{t(\cdot)}(\overline{\Omega})}\leq C|\phi|_{L_\mu^{s(\cdot)}(\overline{\Omega})},\ \ \forall \phi\in C^\infty(\overline{\Omega})
$$
for some $s,t\in C_+(\overline{\Omega})$ satisfying $s(x)<t(x)$ for all $x\in \overline{\Omega}$. Then, there exist an at most countable set $\{x_i\}_{i\in I}$ of distinct points in $\overline{\Omega}$ and
$\{\nu_i\}_{i\in I}\subset (0,\infty)$, such that
$$
\nu=\sum_{i\in I}\nu_i\delta_{x_i}.
$$
\end{lemma}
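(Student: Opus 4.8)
\textbf{Proof proposal for Lemma \ref{L.reserveHolder}.}

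The plan is to adapt Lions' classical argument (as in \cite{Lions}) to the variable-exponent measure-theoretic setting, following \cite{Bonder}. The hypothesis is a reverse-Hölder-type inequality $|\phi|_{L_\nu^{t(\cdot)}(\overline{\Omega})}\leq C|\phi|_{L_\mu^{s(\cdot)}(\overline{\Omega})}$ for all $\phi\in C^\infty(\overline{\Omega})$, with $s(x)<t(x)$ everywhere; the conclusion is that $\nu$ is a purely atomic measure supported on an at most countable set. First I would observe that by density (and since both sides are continuous in the sup-norm on $\overline{\Omega}$ when one approximates characteristic-type functions by smooth ones, using that $\mu,\nu$ are finite) the inequality extends to a suitable class of bounded Borel functions, in particular to functions of the form $\phi=\mathbf 1_E$ approximated by smooth cutoffs, so that one may deduce $\nu(E)^{1/t}\lesssim \mu(E)^{1/s}$-type control on small balls. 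More precisely, for a point $x\in\overline{\Omega}$ and radius $\varrho>0$, testing with smooth functions $\phi_\varrho$ that are $1$ on $B(x,\varrho)$ and supported in $B(x,2\varrho)$, and using Proposition~\ref{norm-modular} to pass between norms and modulars, one gets an estimate of the shape
$$
\nu\big(B(x,\varrho)\big)^{1/t_{B}} \leq C'\, \mu\big(B(x,2\varrho)\big)^{1/s_{B}},
$$
where $t_B,s_B$ are values of $t,s$ on the relevant ball (up to the usual $\pm1$ corrections from the modular inequalities, which are harmless since we will let $\varrho\to0$).

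Next I would run the standard atomicity dichotomy. Let $D:=\{x\in\overline{\Omega}: \nu(\{x\})>0\}$ be the (at most countable, since $\nu$ is finite) set of atoms, write $\{x_i\}_{i\in I}=D$ and $\nu_i:=\nu(\{x_i\})$, and set $\sigma:=\nu-\sum_{i\in I}\nu_i\delta_{x_i}$, the non-atomic part. The goal is to show $\sigma\equiv 0$. Suppose not; then there is a Borel set $A$ with $\sigma(A)>0$ and $\sigma(\{x\})=0$ for every $x$. For $\sigma$-a.e.\ $x\in A$ one has, by the Besicovitch differentiation theorem applied to the pair of finite measures $\nu$ and $\mu$ (or directly to $\sigma$ and $\mu$), that $\nu(B(x,\varrho))\to \nu(\{x\})=0$ and $\mu(B(x,\varrho))\to \mu(\{x\})$ as $\varrho\to0$. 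Feeding this into the ball inequality above: the right-hand side stays bounded (since $\mu$ is finite), while on the left, because $\nu(B(x,\varrho))\to0$ and $t_B$ is bounded, we would need $\mu(B(x,2\varrho))^{1/s_B}$ to dominate a quantity that... — here one has to be careful about which direction degenerates. The cleaner route is: the inequality forces, for every ball, $\nu(B(x,\varrho))\leq (C')^{t_B}\mu(B(x,2\varrho))^{t_B/s_B}$. Since $t_B/s_B \geq t^-/s^+ > 1$ on a small enough ball (using continuity of $s,t$ and $s<t$ on the compact $\overline{\Omega}$, so $\inf_x (t(x)-s(x))>0$), we get $\nu(B(x,\varrho)) \leq C'' \,\mu(B(x,2\varrho))^{\beta}$ with a fixed exponent $\beta>1$ near $x$. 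Dividing by $\mu(B(x,2\varrho))$ (when positive) gives $\nu(B(x,\varrho))/\mu(B(x,2\varrho)) \leq C'' \mu(B(x,2\varrho))^{\beta-1}\to 0$; combined with a Radon–Nikodym/Besicovitch covering argument this yields $\nu \ll \mu$ with density $0$ on the non-atomic part, i.e.\ $\sigma(A)=0$, a contradiction. (If $\mu(B(x,2\varrho))=0$ for small $\varrho$ then the inequality gives $\nu(B(x,\varrho))=0$ directly.) Hence $\sigma\equiv0$ and $\nu=\sum_{i\in I}\nu_i\delta_{x_i}$ with all $\nu_i\in(0,\infty)$ by construction.

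I expect the main obstacle to be the rigorous passage from the inequality on smooth $\phi$ to the ball/measure inequality, keeping track of the variable exponents: one cannot simply plug characteristic functions into the variable-exponent Luxemburg norms, so one must use smooth approximations $\phi_\varrho\uparrow \mathbf 1$ together with Proposition~\ref{norm-modular} (and dominated convergence for the modulars, valid since $\mu,\nu$ are finite) to get clean two-sided control, and then argue that the exponents evaluated on shrinking balls can be replaced by their values at the center with negligible error as $\varrho\to0$ — this uses continuity (indeed only uniform continuity) of $s$ and $t$. A secondary technical point is the choice of differentiation theorem: since $\mu,\nu$ need not be doubling, one should invoke the Besicovitch covering theorem (valid in $\mathbb{R}^N$) rather than Lebesgue differentiation, to justify that $\nu$-a.e.\ (equivalently $\sigma$-a.e.)\ the ratios of measures on small balls behave as claimed. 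Once these two points are handled, the dichotomy argument is routine and mirrors \cite[proof of the corresponding lemma]{Bonder}.
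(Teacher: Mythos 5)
The paper does not prove this lemma --- it is cited verbatim from Bonder--Silva \cite{Bonder} and simply invoked in Section~\ref{CCP} --- so there is no in-paper argument to compare against; what you have written is a reconstruction of the Lions/Bonder--Silva proof, and your overall strategy is the right one. However, there are two genuine gaps in the differentiation step. First, the radius mismatch: testing with a bump that is $1$ on $B(x,\varrho)$ and supported in $B(x,2\varrho)$ gives $\nu(B_\varrho)\le C\,\mu(B_{2\varrho})^{\beta}$, and dividing by $\mu(B_{2\varrho})$ only bounds $\nu(B_\varrho)/\mu(B_{2\varrho})$; but this is a \emph{smaller} quantity than the Besicovitch derivative $D_\mu\nu(x)=\lim_{\varrho\to 0}\nu(B_\varrho)/\mu(B_\varrho)$ (since $\mu(B_\varrho)\le\mu(B_{2\varrho})$), so your estimate goes the wrong way and does not force $D_\mu\nu(x)=0$. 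To close this you must either sharpen the cutoffs to a same-radius inequality $\nu(B_\varrho)\le C\mu(\overline{B_\varrho})^\beta$ and then use that $\mu(\partial B_\varrho)=0$ for all but countably many $\varrho$, or bypass $D_\mu\nu$ and run a direct covering estimate.

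Second, you invoke ``$\nu\ll\mu$ with density $0$ on the non-atomic part'' without ever having established $\nu\ll\mu$. In the Lebesgue decomposition $\nu=D_\mu\nu\cdot\mu+\nu_s$ the $\mu$-singular piece satisfies $D_\mu\nu=+\infty$ $\nu_s$-a.e., so proving $D_\mu\nu=0$ off the atoms of $\mu$ says nothing about $\nu_s$. In Lions' argument one proves $\nu\ll\mu$ first (e.g.\ from a set-level inequality $\nu(E)\lesssim\mu(E)^\beta$, whence $\mu(E)=0\Rightarrow\nu(E)=0$), which kills $\nu_s$, and only then applies the zero-density argument. A cleaner route avoiding both issues at once: for any compact $E$ containing no atom of $\mu$ and any $\varepsilon>0$, cover $E$ by balls $B_j$ with $\mu(B_j)<\varepsilon$ and Besicovitch overlap constant $N$, so that $\nu(E)\le\sum_j\nu(B_j)\le C\sum_j\mu(B_j)^\beta\le C\varepsilon^{\beta-1}N\mu(\overline\Omega)\to 0$. (A minor slip: the displayed claim $t_B/s_B\ge t^-/s^+>1$ is false in general, since one may have $t^-\le s^+$; your parenthetical does indicate the correct \emph{local} argument, namely that $\inf_{\overline\Omega}(t-s)>0$ together with uniform continuity of $s,t$ yields a single $\beta>1$ valid on all sufficiently small balls.)
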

The following result is an extension of the Brezis-Lieb Lemma to variable exponent Lebesgue spaces. 
\begin{lemma}[\cite{Ho-Sim}]\label{L.brezis-lieb}
Let $\{f_n\}$ be a bounded sequence in $L^{t(\cdot)}(\Omega)$ ($t\in C_{+}(\overline{\Omega })$) and $f_n(x)\to f(x)$ a.e. $x\in\Omega$. Then $f\in L^{t(\cdot)}(\Omega)$ and
$$
\lim_{n\to\infty}\int_\Omega \left||f_n|^{t(x)}
-|f_n-f|^{t(x)}-|f|^{t(x)}\right|\diff x=0.
$$
\end{lemma}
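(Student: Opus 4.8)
The plan is to run the classical Brezis--Lieb argument, the only genuinely new ingredient being a version of the underlying elementary inequality that is \emph{uniform with respect to the variable exponent} $t(\cdot)$.

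\textbf{Step 1 (membership $f\in L^{t(\cdot)}(\Omega)$).} Since $\{f_n\}$ is bounded in $L^{t(\cdot)}(\Omega)$, the last assertion of Proposition~\ref{norm-modular} produces a constant $C>0$ with $\int_\Omega|f_n|^{t(x)}\diff x\le C$ for all $n$. Because $f_n\to f$ a.e. in $\Omega$, Fatou's lemma gives $\int_\Omega|f|^{t(x)}\diff x\le C<\infty$, and a second application of Proposition~\ref{norm-modular} yields $|f|_{t(\cdot)}<\infty$, i.e. $f\in L^{t(\cdot)}(\Omega)$; in particular $f$ is finite a.e.

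\textbf{Step 2 (a uniform elementary inequality).} I would establish that for every $\e>0$ there exists $C_\e>0$ such that
$$\bigl||a+b|^s-|a|^s\bigr|\le \e\,|a|^s+C_\e\,|b|^s\qquad\text{for all }a,b\in\R\text{ and all }s\in[t^-,t^+].$$
This follows by splitting into the cases $|b|\le\delta|a|$ and $|b|>\delta|a|$: in the first case the mean value theorem bounds the left-hand side by $s\,\delta\,(1+\delta)^{s-1}|a|^s\le\e|a|^s$ provided $\delta=\delta(\e,t^+)$ is chosen small enough; in the second case one simply uses $\bigl||a+b|^s-|a|^s\bigr|\le(|a|+|b|)^s+|a|^s\le C_\e|b|^s$ with $C_\e$ depending only on $\delta$ and $t^+$. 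The point is that $\delta$ and $C_\e$ can be taken independent of $s$ because $s$ runs over the compact interval $[t^-,t^+]\subset(1,\infty)$.

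\textbf{Step 3 (domination and passage to the limit).} Applying the inequality of Step~2 pointwise with $a=f_n(x)-f(x)$, $b=f(x)$, $s=t(x)$ gives
$$\Bigl||f_n|^{t(x)}-|f_n-f|^{t(x)}-|f|^{t(x)}\Bigr|\le \e\,|f_n-f|^{t(x)}+(C_\e+1)|f|^{t(x)}.$$
Hence, setting
$$W_{n,\e}(x):=\left(\Bigl||f_n|^{t(x)}-|f_n-f|^{t(x)}-|f|^{t(x)}\Bigr|-\e\,|f_n-f|^{t(x)}\right)^{+},$$
one has $0\le W_{n,\e}\le(C_\e+1)\,|f|^{t(\cdot)}\in L^1(\Omega)$, and since $f_n\to f$ a.e. (with $f$ finite a.e.) one checks that $W_{n,\e}(x)\to0$ for a.e. $x\in\Omega$. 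The dominated convergence theorem then yields $\int_\Omega W_{n,\e}\diff x\to0$. Combining this with $\int_\Omega|f_n-f|^{t(x)}\diff x\le 2^{t^+}C$ (from Step~1) and the definition of $W_{n,\e}$, we get
$$\limsup_{n\to\infty}\int_\Omega\Bigl||f_n|^{t(x)}-|f_n-f|^{t(x)}-|f|^{t(x)}\Bigr|\diff x\le 2^{t^+}C\,\e,$$
and letting $\e\to0^{+}$ completes the proof.

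\textbf{Main obstacle.} The only step that is not a verbatim transcription of the constant-exponent proof is the uniformity in the exponent in Step~2; once that elementary inequality is available, the Fatou/dominated-convergence machinery carries over unchanged. A minor point of care is discarding the null set on which $f_n\not\to f$ or $|f|=\infty$ before invoking the a.e.\ convergence of $W_{n,\e}$.
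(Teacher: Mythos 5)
Your proof is correct, and it follows essentially the same route as the cited source (the paper does not reprove this lemma; it is quoted from \cite{Ho-Sim}, whose argument is precisely the classical Brezis--Lieb scheme made $s$-uniform). The one ingredient you rightly flag as new --- the elementary inequality $\bigl||a+b|^s-|a|^s\bigr|\le\e|a|^s+C_\e|b|^s$ with $\delta,C_\e$ independent of $s\in[t^-,t^+]$ --- is exactly what is needed, and your two-case verification (mean value theorem on $y\mapsto y^s$ when $|b|\le\delta|a|$, trivial bounds when $|b|>\delta|a|$) does give constants depending only on $t^+$ and $\delta$; the hypothesis $t^->1$ ensures $s\ge 1$ so the MVT bound $s\,\xi^{s-1}$ is monotone in $\xi$ as used. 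Step~1 (Fatou plus Proposition~\ref{norm-modular} to pass from bounded modular to $f\in L^{t(\cdot)}$) and Step~3 (dominated convergence on $W_{n,\e}$, the crude bound $\int_\Omega|f_n-f|^{t(x)}\diff x\le 2^{t^+}C$, then $\e\to 0^+$) are all correct. No gaps.
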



\begin{proof}[\textbf{Proof of Theorem~\ref{Theo.ccp}}]

Let $v_n=u_n-u$. Then, up to a subsequence, we have
\begin{eqnarray}\label{T.conv.of.v_n}
\begin{cases}
v_n(x) &\to  \quad 0 \quad \text{a.e.}\quad  x\in\Omega,\\
v_n &\rightharpoonup \quad 0 \quad \text{in}\quad  X.
\end{cases}
\end{eqnarray}
So, by Lemma~\ref{L.brezis-lieb}, we deduce that
$$
\lim_{n\to\infty}\int_\Omega\left||u_n|^{q(x)}
-|v_n|^{q(x)}-|u|^{q(x)}\right|\diff x=0.$$
From this and \eqref{T.conv.of.v_n}, we easily obtain
$$
\lim_{n\to\infty}\Big(\int_\Omega \phi |u_n|^{q(x)}\diff x
-\int_\Omega\phi |v_n|^{q(x)} \diff x\Big)
=\int_\Omega \phi |u|^{q(x)} \diff x, \quad \forall  \phi\in C(\overline{\Omega}),$$
i.e.,
\begin{equation}\label{T.w*-vn}
|v_n|^{q(\cdot)}\overset{\ast }{\rightharpoonup }\bar{\nu}=\nu-|u|^{q(\cdot)}\quad \text{in} \ \ \mathcal{M}(\overline{\Omega}).
\end{equation}
It is clear that $\{|\Delta v_n|^{p(\cdot)}\}$ is bounded in $L^1(\Omega)$. So up to a subsequence, we have
\begin{equation}
|\Delta v_n|^{p(\cdot)} \overset{\ast }{\rightharpoonup }\quad \bar{\mu}\quad \text{in}\quad \mathcal{M}(\overline{\Omega})
\end{equation}
for some finite nonnegative Radon measure $\bar{\mu}$ on $\overline{\Omega}$. Clearly, $\phi v\in X$ for any $\phi\in C^\infty(\overline{\Omega})$ and for any $v\in X$. So, utilizing \eqref{S}, for any $\phi\in C^\infty(\overline{\Omega}),$ we get
\begin{align}
S|\phi v_n|_{q(\cdot)} &\leq |\Delta(\phi v_n)|_{p(\cdot)}\notag\\
&\leq |\phi\Delta v_n|_{p(\cdot)}+2|\nabla\phi\cdot\nabla v_n|_{p(\cdot)}+|v_n\Delta\phi|_{p(\cdot)}\notag\\
&\leq |\phi\Delta v_n|_{p(\cdot)}+2\|\phi\|_{C^2(\overline{\Omega})}\|v_n\|_{W^{1,p(\cdot)}(\Omega)}.\label{T.est.norm1}
\end{align}
Note that $v_n\to 0$ in $W^{1,p(\cdot)}(\Omega)$ in view of Proposition~\ref{compact.imb.ZP}. From this and Lemma~\ref{L.convergence}, we deduce from \eqref{T.w*-vn}-\eqref{T.est.norm1} that
$$
S|\phi|_{L_{\bar{\nu}}^{q(\cdot)}(\overline{\Omega})}\leq |\phi|_{L_{\bar{\mu}}^{p(\cdot)}(\overline{\Omega})},\quad \forall \phi\in C^\infty(\overline{\Omega}).$$
Thus, from Lemma~\ref{L.reserveHolder}, we obtain \eqref{T.ccp.form.nu}.
We claim that $\{x_i\}_{i\in I}\subset \mathcal{A}$. Assume by contradiction that there is some $x_i\in \overline{\Omega}\setminus\mathcal{A}$. Let $\delta>0$ be such that $\overline{B_{2\delta}(x_i)}\subset \mathbb{R}^N\setminus\mathcal{A}$. Set $B=B_\delta(x_i)\cap \overline{\Omega}$ then $\overline{B}\subset \overline{\Omega}\setminus\mathcal{A}$ and hence, $q(x)<p_2^\ast(x)$ for all $x\in\overline{B}$. Since $1<q(x)<p_2^\ast(x)$ for all $x\in\overline{B}\cap \overline{\Omega}$, we can find $\widetilde{q}\in C_+(\overline{\Omega})$ such that $\widetilde{q}|_{\overline{B}}=q$ and $\widetilde{q}(x)<p_2^\ast(x)$ for all $x\in\overline{\Omega}$. Thus, $u_n\to u$ in $L^{\widetilde{q}(\cdot)}(\Omega)$ in view of Proposition~\ref{critical.imb}. Equivalently, $\int_{\Omega}|u_n-u|^{\widetilde{q}(x)}\diff x\to 0;$ hence, $\int_{B}|u_n-u|^{q(x)}\diff x\to 0.$ This yields
$$\int_{B}|u_n|^{q(x)}\diff x\to \int_{B}|u|^{q(x)}\diff x$$
in view of Lemma~\ref{L.brezis-lieb}. From this and the fact that $\nu (B)\leq \liminf_{n\to \infty}\int_{B}|u_n|^{q(x)}\diff x$ (see \cite[Proposition 1.203]{Fonseca}), we obtain $\nu (B)\leq \int_{B}|u|^{q(x)}\diff x.$
Meanwhile, from \eqref{T.ccp.form.nu}, we have
$$\nu (B)\geq \int_{B}|u|^{q(x)}\diff x+\nu_i>\int_{B}|u|^{q(x)}\diff x,$$
a contradiction. So $\{x_i\}_{i\in I}\subset \mathcal{A}$.

Next, to obtain \eqref{T.ccp.nu_mu}, let $\eta$ be in $C_c^\infty(\mathbb{R}^N)$ such that $0\leq \eta\leq 1,$ $\eta\equiv 1$ on $B_{1/2}(0)$ and $\eta\equiv 0$ outside $B_1(0)$. Fix $i\in I.$ For $\epsilon>0,$ set $\Omega_{i,\epsilon}:=B_\epsilon(x_i)\cap \overline{\Omega},$ $\phi_{i,\epsilon}(x):=
\eta(\frac{x-x_i}{\epsilon})$ and
\begin{gather*}
p^+_{i,\epsilon}:=\sup_{x\in \Omega_{i,\epsilon}}p(x),\quad
p^-_{i,\epsilon}:=\inf_{x\in \Omega_{i,\epsilon}}p(x),\\
q^+_{i,\epsilon}:=\sup_{x\in \Omega_{i,\epsilon}}q(x),\quad
q^-_{i,\epsilon}:=\inf_{x\in \Omega_{i,\epsilon}}q(x).
\end{gather*}
Utilizing  \eqref{S} again, we have
\begin{align*}
S&|\phi_{i,\epsilon} u_n|_{q(\cdot)}\notag\\
&\leq \big|\phi_{i,\epsilon}\Delta u_n\big|_{p(\cdot)}+2\big|\nabla u_n\cdot\nabla \phi_{i,\epsilon}\big|_{p(\cdot)}+\big|u_n\Delta \phi_{i,\epsilon}\big|_{p(\cdot)}\notag\\
&\leq \big|\phi_{i,\epsilon}\Delta u_n\big|_{p(\cdot)}+2\big|\nabla u\cdot\nabla \phi_{i,\epsilon}\big|_{p(\cdot)}+\big|u\Delta \phi_{i,\epsilon}\big|_{p(\cdot)}+2\|\phi_{i,\epsilon}\|_{C^2(\overline{\Omega})}\|u_n-u\|_{W^{1,p(\cdot)}(\Omega)}.\label{T.etsphiun}
\end{align*}
Letting $n\to \infty$ in the last estimate with taking  Lemma~\ref{L.convergence} into account and noting $u_n\to u$ in $W^{1,p(\cdot)}(\Omega)$ due to Proposition~\ref{compact.imb.ZP}, we get that
\begin{equation}\label{T.estforsingular}
S |\phi_{i,\epsilon}|_{L^{q(\cdot)}_\nu(\overline{\Omega})} \leq
|\phi_{i,\epsilon}|_{L^{p(\cdot)}_\mu(\overline{\Omega})}+2\big|\nabla u\cdot\nabla \phi_{i,\epsilon}\big|_{p(\cdot)}+\big|u\Delta \phi_{i,\epsilon}\big|_{p(\cdot)}.
\end{equation}
Invoking Proposition~\ref{norm-modular}, we have 
$$|\phi_{i,\epsilon}|_{L^{q(\cdot)}_\nu(\overline{\Omega})}\geq \min\left\{\left(\int_{\Omega_{i,\epsilon}}|\phi_{i,\epsilon}|^{q(x)}\diff \nu\right)^{\frac{1}{q^+_{i,\epsilon}}} ,\left(\int_{\Omega_{i,\epsilon}}|\phi_{i,\epsilon}|^{q(x)}\diff\nu\right)^{\frac{1}{q^-_{i,\epsilon}}}\right\}.$$
Meanwhile,
$$\int_{\Omega_{i,\epsilon}}|\phi_{i,\epsilon}|^{q(x)}\diff\nu\geq
\int_{B_{\epsilon/2}(x_i)\cap \overline{\Omega}}|
\phi_{i,\epsilon}|^{q(x)}\diff\nu\geq \nu_i.$$
Thus,
\begin{equation}\label{T.nu_i}
|\phi_{i,\epsilon}|_{L^{q(\cdot)}_\nu(\overline{\Omega})}\geq \min\left\{\nu_i^{\frac{1}{q^+_{i,\epsilon}}} ,\nu_i^{\frac{1}{q^-_{i,\epsilon}}}\right\}.
\end{equation}
On the other hand, by applying Proposition~\ref{norm-modular} again, we have 
$$|\phi_{i,\epsilon}|_{L^{p(\cdot)}_\mu(\overline{\Omega})}\leq \max\left\{\left(\int_{\Omega_{i,\epsilon}}|\phi_{i,\epsilon}|^{p(x)}\diff\mu\right)^{\frac{1}{p^+_{i,\epsilon}}} ,\left(\int_{\Omega_{i,\epsilon}}|\phi_{i,\epsilon}|^{p(x)}\diff\mu\right)^{\frac{1}{p^-_{i,\epsilon}}}\right\}.$$
Meanwhile,
$$\int_{\Omega_{i,\epsilon}}|\phi_{i,\epsilon}|^{p(x)}\diff\mu\leq \int_{\Omega_{i,\epsilon}}\diff\mu= \mu(\Omega_{i,\epsilon}).$$
Thus, we obtain
\begin{equation}\label{T.mu_i}
|\phi_{i,\epsilon}|_{L^{p(\cdot)}_\mu(\overline{\Omega})}\leq \max\left\{\mu(\Omega_{i,\epsilon})^{\frac{1}{p^+_{i,\epsilon}}} ,\mu(\Omega_{i,\epsilon})^{\frac{1}{p^-_{i,\epsilon}}}\right\}.
\end{equation}
To estimate the remaining two terms in \eqref{T.estforsingular}, we first note that $|\nabla u|\in L^{p_1^\ast(\cdot)}(\Omega)$ (since $|\nabla u|\in W^{1,p(\cdot)}(\Omega)$) and $u\in L^{p_2^\ast(\cdot)}(\Omega)$ in view of Proposition~\ref{critical.imb}. Then using Proposition~\ref{prop.Holder}, we obtain
\begin{align}\label{est.critical.1}
\int_{\Omega} |\nabla u\cdot\nabla\phi_{i,\epsilon}|^{p(x)} \diff x&=\int_{\Omega_{i,\epsilon}} |\nabla u|^{p(x)}|\nabla\phi_{i,\epsilon}|^{p(x)} \diff x\notag\\
& \leq 2\big| |\nabla u|^{p(\cdot)}\big|_{L^{\frac{p_1^\ast(\cdot)}{p(\cdot)}}(\Omega_{i,\epsilon})}
\big||\nabla \phi_{i,\epsilon}|^{p(\cdot)} \big|_{L^{\frac{N}{p(\cdot)}}
(\Omega_{i,\epsilon})}
\end{align}
and 
\begin{align}\label{est.critical.2}
\int_{\Omega} |u\Delta\phi_{i,\epsilon}|^{p(x)} \diff x&=\int_{\Omega_{i,\epsilon}} |u|^{p(x)}|\Delta\phi_{i,\epsilon}|^{p(x)} \diff x\notag\\
& \leq 2\big|| u|^{p(\cdot)}\big|_{L^{\frac{p_2^\ast(\cdot)}{p(\cdot)}}(\Omega_{i,\epsilon})}
\big||\Delta \phi_{i,\epsilon}|^{p(\cdot)} \big|_{L^{\frac{N}{2p(\cdot)}}
	(\Omega_{i,\epsilon})}.
\end{align}
Applying Proposition~\ref{norm-modular}, we have 
$$
\big||\nabla \phi_{i,\epsilon}|^{p(\cdot)} \big|_{L^{\frac{N}{p(\cdot)}}
(\Omega_{i,\epsilon})}
\le \round{1+ \int_{B_\epsilon(x_i)} |\nabla \phi_{i,\epsilon}|^N \diff x}^{p^+/N}  =\round{ 1+\int_{B_1(0)} |\nabla\eta(y)|^N\, \diff y}^{p^+/N}
$$
and
$$
\big||\Delta \phi_{i,\epsilon}|^{p(\cdot)} \big|_{L^{\frac{N}{2p(\cdot)}}
	(\Omega_{i,\epsilon})}
\le \round{1+ \int_{B_\epsilon(x_i)} |\Delta \phi_{i,\epsilon}|^{N/2} \diff x}^{2p^+/N}  =\round{ 1+\int_{B_1(0)} |\Delta\eta(y)|^{N/2}\, \diff y}^{2p^+/N}.
$$
Utilizing the last two estimates, we deduce from \eqref{est.critical.1} and \eqref{est.critical.2} that
$$2\int_{\Omega} |\nabla u\cdot\nabla\phi_{i,\epsilon}|^{p(x)} \diff x+\int_{\Omega} |u\Delta\phi_{i,\epsilon}|^{p(x)} \diff x\to 0\ \ \text{as}\ \ \epsilon \to 0^+.$$
By Proposition~\ref{prop.eqiv.conv}, this is equivalent to
\begin{equation}\label{T.ugradphi}
2\big|\nabla u\cdot\nabla \phi_{i,\epsilon}\big|_{p(\cdot)}+\big|u\Delta \phi_{i,\epsilon}\big|_{p(\cdot)}\to 0\ \ \text{as}\ \ \epsilon \to 0^+.
\end{equation}
Letting $\epsilon \to 0^+$ in \eqref{T.estforsingular} and taking into account \eqref{T.nu_i}, \eqref{T.mu_i}, \eqref{T.ugradphi}, the continuity of $p,q$ on $\overline{\Omega},$ and the fact that $x_i\in  \mathcal{A}$ we obtain
$$S \nu_i^{1/p_2^\ast(x_i)} \le \mu_i^{1/p(x_i)},
$$
where $\mu_i := \mu(\{x_i\})$. In particular, $\{x_i\}_{i\in I}$ are atoms of $\mu$.

Finally, to show \eqref{T.ccp.form.mu}, note that for any $\phi\in C(\overline{\Omega})$  with $\phi\geq 0$, the functional $u\mapsto \int_{\Omega}\phi(x)|\Delta u|^{p(x)}\diff x$ is convex and differentiable on $X$. Hence, it is weakly lower semicontinuous and therefore,
$$\int_{\Omega}\phi(x)|\Delta u|^{p(x)}\diff x\leq \liminf_{n\to  \infty } \int_{\Omega }\phi(x)|\Delta u_n| ^{p(x)}\diff x=\int_{\overline{\Omega }}\phi \diff \mu.$$
Thus, $\mu\geq |\Delta u|^{p(\cdot)}$. Extract $\mu$ to its atoms, we deduce \eqref{T.ccp.form.mu}.
\end{proof}

\section{Proofs of Existence Results}\label{Existence}
In this section, we prove the existence results stated in Section~\ref{MainResults}, that is, Theorems~\ref{Theo.cc} and \ref{Theo.sl}. To determine solutions of problem \eqref{e1.1} we consider the energy functional associated with problem~\eqref{e1.1}:
\begin{align*}\label{J_lambda}
\notag J_\lambda(u) :=\int_\Omega &\frac{1}{p(x)}|\Delta u|^{p(x)} \diff x+\widehat{M}\left(\int_\Omega \frac{1}{p(x)}|\nabla u|^{p(x)} \diff x\right)\\
&-\lambda\int_\Omega F(x,u)\diff x-\int_\Omega \frac{1}{q(x)}|u|^{q(x)}\diff x,\quad u\in X,
\end{align*}
where $\widehat{M}(t):=\int_0^t M(s)\diff s$. Clearly, $J_\lambda: X\to\R$ is of class $C^1$ and any critical point of $J_\lambda$ is a solution to problem~\eqref{e1.1}. In order to prove Theorem~\ref{Theo.cc}, we apply a truncation technique used in \cite{Ber-Gar-Per.1996,Fig-Nas.EJDE2016} as follows. Fix $t_0>0$, which will be specific later, and define a truncation of $M(t)$ as
\begin{equation}\label{Def.M0}
M_0(t):=\begin{cases}
M(t),\ \ &0\leq t\leq t_0,\\
M(t_0),\ \ &t>t_0.
\end{cases}
\end{equation}
Then, we consider a modified energy functional $\widetilde{J}_\lambda: X\to\R$ defined as
\begin{align}\label{widetilde{J}_lambda}
\notag \widetilde{J}_\lambda(u):=\int_\Omega &\frac{1}{p(x)}|\Delta u|^{p(x)} \diff x+\widehat{M}_0\left(\int_\Omega \frac{1}{p(x)}|\nabla u|^{p(x)} \diff x\right)\\
&-\lambda\int_\Omega F(x,u) 
\diff x-\int_\Omega \frac{1}{q(x)}|u|^{q(x)} \diff x,\quad u\in X,
\end{align}
where $\widehat{M}_0(t):=\int_0^t M_0(s)\diff s$. It is clear that $M_0\in C(\R_0^+,\R^+)$, 
\begin{equation}\label{Est.M0}
m_0\leq M_0(t)\leq M(t_0),\quad \forall t\in\R_0^+
\end{equation}
and
\begin{equation}\label{Est.hat.M0}
m_0t\leq \widehat{M}_0(t)\leq M(t_0)t,\quad \forall t\in\R_0^+.
\end{equation}
The modified functional $\widetilde{J}_\lambda$ is also of class $C^1(X,\R)$ and its Fr\'echet derivative $\widetilde{J}_\lambda': X\to X^\ast$ is given by 
\begin{gather*}
\left\langle \widetilde{J}_\lambda' (u),v\right\rangle=\int_\Omega|\Delta u|^{p(x)-2}\Delta u\Delta v\,\diff x + M_0\left(\int_\Omega\frac{1}{p(x)}|\nabla u|^{p(x)}\,\diff x\right)\int_\Omega|\nabla u|^{p(x)-2}\nabla u\cdot\nabla v\,\diff x\\
-\lambda\int_\Omega f(x,u)v\,\diff x-\int_\Omega |u|^{q(x)-2}uv\,\diff x,\quad \forall\, u,v\in X.
\end{gather*}
Here and in the sequel, $X^\ast$ denotes the dual space of $X$ and $\langle \cdot,\cdot \rangle$ denote the duality pairing between $X$ and $X^\ast$.  Obviously, any critical point $u$ of $\widetilde{J}_\lambda$ with $\int_\Omega\frac{1}{p(x)}|\nabla u|^{p(x)}\,\diff x\leq t_0$ is a solution to problem~\eqref{e1.1}.

In the rest of this section, on $X$ we will make use of the following equivalent norm:
\begin{equation*}
\|u\|_0:=\inf\left\{\tau>0: \int_\Omega \frac{1}{p(x)}\left[\left|\frac{\Delta u}{\tau}\right|^{p(x)}+\left|\frac{\nabla u}{\tau}\right|^{p(x)}\right] \diff x\leq 1\right\}.
\end{equation*}
As a standard relation between norm and modular, we have
\begin{equation}\label{norm_0-modular}
\min\left\{\|u\|_0^{p^-},\|u\|_0^{p^+}\right\}\leq \int_\Omega \frac{1}{p(x)}\left[|\Delta u|^{p(x)} +|\nabla u|^{p(x)}\right] \diff x\leq \max\left\{\|u\|_0^{p^-},\|u\|_0^{p^+}\right\}
\end{equation}
for all $u\in X$. We also make use of the fact that assumption $(\mathcal P)$ to ensures
\begin{equation}\label{mu1.def}
\mu_1:=\inf_{\varphi\in C_c^\infty(\Omega)\setminus\{0\}}\frac{\int_\Omega|\nabla\varphi|^{p(x)}\diff x}{\int_\Omega|\varphi|^{p(x)}\diff x}>0
\end{equation}
that was shown in \cite{Fan-Zhang-Zhao.2005}. That is, under the assumption $(\mathcal P)$ we have
\begin{equation}\label{mu1}
\mu_1 \int_\Omega|\varphi|^{p(x)}\diff x\leq \int_\Omega|\nabla\varphi|^{p(x)}\diff x,\quad \forall \varphi\in X.
\end{equation}

\subsection{The generalized concave-convex type problem}
In this subsection we prove the existence of infinitely many solutions for problem~\eqref{e1.1} when the nonlinearity is a general form of concave-convex type. To this purpose, we always assume that the assumptions $(\mathcal{P})$, $(\mathcal{Q})$, $(\mathcal{C})$, $(\mathcal{M})$, $(\mathcal{F}_0)$, $(\mathcal{F}_1)$ and $(\mathcal{F}_2)$ hold. 

Now, we define the truncation $M_0(t)$ of $M(t)$ given in \eqref{Def.M0} and the truncated energy functional $\widetilde{J}_\lambda$ given in \eqref{widetilde{J}_lambda} by fixing $t_0\in (0,1)$ such that
\begin{equation*}\label{t0-sub}
m_0<M(t_0)<q^-m_0
\end{equation*} 
For simplicity of notation, we denote
\begin{equation}\label{lambda1}
\lambda_1:=\frac{q^-}{2p^+C_4}\left(m_0-\frac{M(t_0)}{q^-}\right)\mu_1>0
\end{equation}
and
\begin{equation}\label{K}
K:=\underset{*\in\{+,-\}}{\max}(l^*)^{-\frac{l^*}{l^*-1}}(l^*-1)a^{-\frac{1}{l^*-1}}b^{\frac{l^*}{l^*-1}}>0,
\end{equation}
with $l(x):=\frac{p(x)}{r(x)}>1$ for all $x\in\overline{\Omega}$,  $a:=	\frac{1}{2p^+}\left(m_0-\frac{M(t_0)}{q^-}\right)\mu_1>0$ and $b:= \frac{2C_3}{q^-}|1|_{\frac{l(\cdot)}{l(\cdot)-1}}>0$, where $\mu_1$ and $C_3,C_4$ are given by \eqref{mu1.def} and $(\mathcal{F}_2)$, respectively. The following local compactness is essential for seeking critical points of $\widetilde{J}_\lambda$.
\begin{lemma} \label{Le.PS1}
	Let $\lambda\in (0,\lambda_1)$ and let $\{u_n\}$ be a bounded sequence in $X$ such that
	\begin{equation}\label{Le.PS1.PS-seq}
	\widetilde{J}_\lambda(u_n)\to c\ \ \text{and}\ \ \widetilde{J}_\lambda'(u_n)\to 0
	\end{equation}
	for some $c\in\R$ satisfying
	\begin{align}\label{Le.PS1.c}
	c<\left(\frac{1}{p^+}-\frac{1}{q^-}\right)S^{\frac{N}{2}}-K\max\left\{\lambda^{\frac{l^+}{l^+-1}},\lambda^{\frac{l^-}{l^--1}}\right\},
	\end{align}
	where $S$ and $l,K$ are given by \eqref{S} and \eqref{K}, respectively. Then, $\{u_n\}$ has a convergent subsequence.
\end{lemma}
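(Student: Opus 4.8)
The plan is to run the standard ``compactness below the first critical level'' argument, now adapted to the $p(\cdot)$-biharmonic setting via Theorem~\ref{Theo.ccp}. Let $\{u_n\}$ be as in the statement. Since it is bounded in the reflexive space $X$, after passing to a subsequence we may assume $u_n\rightharpoonup u$ in $X$, $u_n\to u$ in $W^{1,p(\cdot)}(\Omega)$ (Proposition~\ref{compact.imb.ZP}), $u_n(x)\to u(x)$ a.e., $|\Delta u_n|^{p(\cdot)}\overset{\ast}{\rightharpoonup}\mu$ and $|u_n|^{q(\cdot)}\overset{\ast}{\rightharpoonup}\nu$ in $\mathcal M(\overline\Omega)$. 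Theorem~\ref{Theo.ccp} then furnishes the decomposition \eqref{T.ccp.form.nu}--\eqref{T.ccp.nu_mu} with points $\{x_i\}_{i\in I}\subset\mathcal A$ and masses $\nu_i,\mu_i>0$. The goal is to prove $I=\emptyset$, which will force $|u_n|^{q(\cdot)}\to|u|^{q(\cdot)}$ in $L^1(\Omega)$ (no loss of mass), whence $u_n\to u$ in $L^{q(\cdot)}(\Omega)$; combining this with the convergence in $W^{1,p(\cdot)}(\Omega)$, the $(S_+)$-type property of the operator $u\mapsto\int_\Omega|\Delta u|^{p(x)-2}\Delta u\,\Delta(\cdot)\,\diff x$ together with $\langle\widetilde J_\lambda'(u_n),u_n-u\rangle\to0$ yields $\|\Delta u_n\|_{p(\cdot)}\to\|\Delta u\|_{p(\cdot)}$ and hence, by uniform convexity, $u_n\to u$ in $X$.

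To rule out concentration, fix $i\in I$ and test the Palais--Smale condition against $\phi_{i,\epsilon}u_n$ where $\phi_{i,\epsilon}(x)=\eta((x-x_i)/\epsilon)$ is the cut-off from the proof of Theorem~\ref{Theo.ccp}. From $\langle\widetilde J_\lambda'(u_n),\phi_{i,\epsilon}u_n\rangle\to0$, using $(\mathcal M)$ (so $M_0\ge0$), $(\mathcal F_0)$ (to control $\int_\Omega f(x,u_n)\phi_{i,\epsilon}u_n\,\diff x$, which vanishes as $\epsilon\to0$ since $\alpha^-<p_2^\ast$ locally away from $\mathcal A$ is false, but one handles it via the local convergence $u_n\to u$ in $L^{\alpha(\cdot)}_{\mathrm{loc}}$ off $\mathcal A$ — more precisely, one only needs that the $f$-term and the cross terms $\nabla u_n\cdot\nabla\phi_{i,\epsilon}$, $u_n\Delta\phi_{i,\epsilon}$ become negligible as $n\to\infty$ then $\epsilon\to0^+$, exactly as in \eqref{T.ugradphi}), one arrives in the limit $n\to\infty$, $\epsilon\to0^+$ at the balance $\mu_i\le\nu_i$. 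Together with \eqref{T.ccp.nu_mu}, i.e. $S\nu_i^{1/p_2^\ast(x_i)}\le\mu_i^{1/p(x_i)}$, and the identity $\tfrac{1}{p(x_i)}-\tfrac{1}{p_2^\ast(x_i)}=\tfrac{2}{N}$ valid because $x_i\in\mathcal A$, this gives the two-sided bound $\nu_i\ge S^{N/2}$ for every $i\in I$; in particular $I$ is finite.

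Now suppose, for contradiction, that some $i_0\in I$ exists. Evaluate the level $c$. Writing $c=\lim_n\big[\widetilde J_\lambda(u_n)-\tfrac{1}{q^-}\langle\widetilde J_\lambda'(u_n),u_n\rangle\big]$ and grouping terms, one gets
\begin{align*}
c&=\lim_{n\to\infty}\Bigg[\int_\Omega\Big(\tfrac{1}{p(x)}-\tfrac{1}{q^-}\Big)|\Delta u_n|^{p(x)}\diff x+\widehat M_0(A_n)-\tfrac{1}{q^-}M_0(A_n)\!\int_\Omega|\nabla u_n|^{p(x)}\diff x\\
&\qquad\qquad-\lambda\!\int_\Omega\!\Big(F(x,u_n)-\tfrac{1}{q^-}f(x,u_n)u_n\Big)\diff x+\int_\Omega\Big(\tfrac{1}{q^-}-\tfrac{1}{q(x)}\Big)|u_n|^{q(x)}\diff x\Bigg],
\end{align*}
where $A_n:=\int_\Omega\tfrac{1}{p(x)}|\nabla u_n|^{p(x)}\diff x$. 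Using \eqref{Est.hat.M0}, $(\mathcal Q)$, and the second inequality of $(\mathcal F_2)$ to bound the Kirchhoff and $f$ contributions from below — the precise algebra is where the constants $\lambda_1$ in \eqref{lambda1}, $K$ in \eqref{K}, $a$, $b$, and $\mu_1$ in \eqref{mu1} are calibrated, via a Young-type inequality $C_3|t|^{r(x)}\le a|t|^{p(x)}+$ (const depending on $l,a,C_3$) to absorb the $|t|^{r(\cdot)}$ term against a fraction of $\int|\nabla u_n|^{p(\cdot)}$ controlled through \eqref{mu1} — one obtains the lower estimate
$$
c\ \ge\ \Big(\tfrac{1}{p^+}-\tfrac{1}{q^-}\Big)\nu(\overline\Omega)-K\max\big\{\lambda^{\frac{l^+}{l^+-1}},\lambda^{\frac{l^-}{l^--1}}\big\}\ \ge\ \Big(\tfrac{1}{p^+}-\tfrac{1}{q^-}\Big)\nu_{i_0}-K\max\big\{\lambda^{\frac{l^+}{l^+-1}},\lambda^{\frac{l^-}{l^--1}}\big\},
$$
the last step dropping all nonnegative terms except the single atom $\nu_{i_0}\delta_{x_{i_0}}$ in $\nu$. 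Since $\nu_{i_0}\ge S^{N/2}$, this contradicts the hypothesis \eqref{Le.PS1.c}. Hence $I=\emptyset$, and the argument of the first paragraph completes the proof.

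I expect the main obstacle to be the bookkeeping in that last lower bound on $c$: one must simultaneously (i) keep the $|\Delta u_n|^{p(\cdot)}$ term's weak lower semicontinuity to recover the atom $\nu_{i_0}$ through \eqref{T.ccp.form.mu} and $\mu_{i_0}\le\nu_{i_0}$, (ii) discard the positive Kirchhoff remainder using only $m_0\le M_0\le M(t_0)<q^-m_0$ and \eqref{mu1}, and (iii) absorb the sub-$p(\cdot)$ nonlinearity $F$ via Young's inequality with the exponent $l(\cdot)=p(\cdot)/r(\cdot)$, producing exactly the constant $K$ and the $\lambda$-power in \eqref{Le.PS1.c}. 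The boundedness of $\{u_n\}$ is assumed, so no Ambrosetti--Rabinowitz-type coercivity argument is needed here; that is deferred to where the lemma is applied.
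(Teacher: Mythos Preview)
Your outline follows the paper's proof: apply Theorem~\ref{Theo.ccp}, test $\widetilde J_\lambda'(u_n)$ against $\phi_{i,\epsilon}u_n$ to get a lower bound on the atoms, then contradict \eqref{Le.PS1.c} by estimating $\widetilde J_\lambda(u_n)-\tfrac{1}{q^-}\langle\widetilde J_\lambda'(u_n),u_n\rangle$, and finish with the $(S_+)$-type argument. The structure is correct, but two points deserve correction.

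First, in your displayed lower bound on $c$ the measure should be $\mu$, not $\nu$: the coefficient $\big(\tfrac{1}{p^+}-\tfrac{1}{q^-}\big)$ sits in front of $\int_\Omega|\Delta u_n|^{p(x)}\diff x$, which converges to $\mu(\overline\Omega)$, and it is the atom $\mu_{i_0}$ (not $\nu_{i_0}$) that one extracts via \eqref{T.ccp.form.mu}. The paper in fact obtains the \emph{equality} $\mu_i=\nu_i$ from the cut-off test (all Kirchhoff, cross and $f$-terms vanish in the double limit), which then gives $\mu_i=\nu_i\ge S^{N/2}$ directly; your weaker $\mu_i\le\nu_i$ also yields $\mu_i\ge S^{N/2}$ after combining with \eqref{T.ccp.nu_mu}, so the argument survives, but the slip $\nu\leftrightarrow\mu$ in the display should be fixed.

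Second, the bookkeeping producing \emph{exactly} the constant $K$ in \eqref{K} is not what you describe. It uses the \emph{first} inequality in $(\mathcal F_2)$ (i.e.\ $q^-F-ft\le C_3|t|^{r(x)}+C_4|t|^{p(x)}$), not the second; the $C_4$-part is absorbed into the Kirchhoff remainder via \eqref{mu1} and the restriction $\lambda<\lambda_1$, leaving $a\int_\Omega|u_n|^{p(x)}\diff x-\tfrac{\lambda C_3}{q^-}\int_\Omega|u_n|^{r(x)}\diff x$. One then passes to the limit $u_n\to u$ in $L^{p(\cdot)}$ and $L^{r(\cdot)}$, applies the H\"older inequality $\int_\Omega|u|^{r(x)}\diff x\le 2|1|_{l'(\cdot)}\,\big||u|^{r(\cdot)}\big|_{l(\cdot)}$, and minimizes $a\xi^{l^\pm}-b\lambda\xi$ in $\xi=\big||u|^{r(\cdot)}\big|_{l(\cdot)}$ (cases $\xi\ge1$, $\xi<1$) to obtain $-K\max\{\lambda^{l^+/(l^+-1)},\lambda^{l^-/(l^--1)}\}$ with $b=\tfrac{2C_3}{q^-}|1|_{l'(\cdot)}$. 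A pointwise Young inequality on $u_n$ would not obviously reproduce this specific $K$ (note the factor $|1|_{l'(\cdot)}$ in $b$), so if you want the lemma exactly as stated you should follow the H\"older-then-minimize route on the limit $u$.
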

\begin{proof}
	Let $\lambda\in (0,\lambda_1)$ and let $\{u_n\}$ be a bounded in $X$ and satisfy \eqref{Le.PS1.PS-seq} with $c$ satisfying \eqref{Le.PS1.c}. By the reflexivity of $X$ and Theorem~\ref{Theo.ccp}, up to a subsequence we have
	\begin{eqnarray}
	u_n(x) &\to& u(x)  \quad \text{a.e.} \ \ x\in\Omega,\label{PL.PS1.a.e}\\
	u_n &\rightharpoonup& u  \quad \text{in} \  X,\label{PL.PS1.w-conv}\\
	|\Delta u_n|^{p(\cdot)} &\overset{\ast }{\rightharpoonup }&\mu \geq |\Delta u|^{p(\cdot)} + \sum_{i\in I} \mu_i \delta_{x_i} \ \text{in}\  \mathcal{M}(\overline{\Omega}),\label{PL.PS1.mu}\\
	|u_n|^{q(\cdot)}&\overset{\ast }{\rightharpoonup }&\nu=|u|^{q(\cdot)} + \sum_{i\in I}\nu_i\delta_{x_i} \ \text{in}\ \mathcal{M}(\overline{\Omega}),\label{PL.PS1.nu}\\
	S \nu_i^{\frac{1}{p_2^\ast(x_i)}} &\leq& \mu_i^{\frac{1}{p(x_i)}}, \ \forall i\in I.\label{PL.PS1.mu-nu}
	\end{eqnarray}
	We claim that $I = \emptyset.$ Suppose on the contrary that there exists $i\in I$. Let $\epsilon>0$ and define $\phi_{i,\epsilon}$ as in the proof of Theorem~\ref{Theo.ccp}. Clearly, $\{\phi_{i,\epsilon}u_n\}$ is bounded in $X$ and we have
	\begin{align*}
\notag	\int_{\Omega}&\phi_{i,\epsilon}|\Delta u_n|^{p(x)}\diff x-\int_{\Omega}\phi_{i,\epsilon}|u_n|^{q(x)}\diff x\\
\notag	=&\langle \widetilde{J}_\lambda'(u_n) ,\phi_{i,\epsilon}u_n \rangle+\lambda\int_{\Omega}\phi_{i,\epsilon}f(x,u_n)u_n\diff x-2\int_{\Omega}|\Delta u_n|^{p(x)-2}\Delta u_n\left(\nabla u_n\cdot\nabla \phi_{i,\epsilon}\right)\diff x\\
\notag	&-\int_{\Omega}|\Delta u_n|^{p(x)-2}\Delta u_n u_n\Delta \phi_{i,\epsilon}\diff x -M_0\left(\int_\Omega\frac{1}{p(x)}|\nabla u_n|^{p(x)}\,\diff x\right)\int_{\Omega}\phi_{i,\epsilon}|\nabla u_n|^{p(x)}\diff x\\
	& -M_0\left(\int_\Omega\frac{1}{p(x)}|\nabla u_n|^{p(x)}\,\diff x\right)\int_{\Omega}|\nabla u_n|^{p(x)-2}u_n\left(\nabla u_n\cdot\nabla \phi_{i,\epsilon}\right)\diff x.
	\end{align*}
	Thus, we have
	\begin{align}\label{PL.PS1.mu_i-nu_i.ineq}
	\notag\bigg|	\int_{\Omega}&\phi_{i,\epsilon}|\Delta u_n|^{p(x)}\diff x-\int_{\Omega}\phi_{i,\epsilon}|u_n|^{q(x)}\diff x\bigg|\\
	\notag	\leq&\left|\langle \widetilde{J}_\lambda'(u_n) ,\phi_{i,\epsilon}u_n \rangle\right|+\lambda\int_{\Omega}\phi_{i,\epsilon}|f(x,u_n)u_n|\diff x+2\int_{\Omega}|\Delta u_n|^{p(x)-1}|\nabla u_n||\nabla \phi_{i,\epsilon}|\diff x\\
	\notag	&+\int_{\Omega}|\Delta u_n|^{p(x)-1}|u_n||\Delta \phi_{i,\epsilon}|\diff x +M_0\left(\int_\Omega\frac{1}{p(x)}|\nabla u_n|^{p(x)}\,\diff x\right)\int_{\Omega}\phi_{i,\epsilon}|\nabla u_n|^{p(x)}\diff x\\
	& +M_0\left(\int_\Omega\frac{1}{p(x)}|\nabla u_n|^{p(x)}\,\diff x\right)\int_{\Omega}|\nabla u_n|^{p(x)-1}|u_n||\nabla \phi_{i,\epsilon}|\diff x.
	\end{align}
	We now estimate each term in the right-hand side of \eqref{PL.PS1.mu_i-nu_i.ineq} as $n\to\infty$ and then $\epsilon\to 0^+$. First, we recall that $\{u_n\}$ is bounded in $X$, namely, there exists $C>0$ such that
	\begin{equation}\label{PL.PS1.bound}
	\sup_{n\in \N} \int_\Omega \left[|\Delta u_n|^{p(x)} +|\nabla u_n|^{p(x)}\right]\, \diff x\leq C.
	\end{equation}
	Moreover, by Proposition~\ref{compact.imb.ZP} we deduce from \eqref{PL.PS1.w-conv} that
	\begin{equation}\label{PL.PS1.s-conv}
	u_n\to u \ \ \text{in}\ \ W^{1,p(\cdot)}(\Omega),\  L^{\alpha(\cdot)}(\Omega)\ \text{and}\ L^{r(\cdot)}(\Omega).
	\end{equation}
	In the rest of the proof, we will also denote by $C$ various positive constants independent of $n$ and $\epsilon$. From the boundedness of $\{\phi_{i,\epsilon}u_n\}$ in $X$ and \eqref{Le.PS1.PS-seq}, we have
	\begin{equation}\label{PL.PS1.mu_i-nu_i.1}
	\limsup_{\epsilon\to 0^+}\,	\limsup_{n\to\infty} \left|\langle \widetilde{J}_\lambda'(u_n) ,\phi_{i,\epsilon}u_n \rangle\right|=0.
	\end{equation}
By $(\mathcal{F}_0)$, we deduce from \eqref{PL.PS1.a.e} and \eqref{PL.PS1.s-conv} that
	\begin{equation}\label{PL.PS1.mu_i-nu_i.2}
	\limsup_{\epsilon\to 0^+}\,	\limsup_{n\to\infty}\int_{\Omega}\phi_{i,\epsilon}|f(x,u_n)u_n|\diff x=\limsup_{\epsilon\to 0^+}\,\int_{\Omega}\phi_{i,\epsilon}|f(x,u)u|\diff x=0
	\end{equation}
in view of the Lebesgue dominated convergence theorem. For the third term, we estimate as follows. Let $\delta>0$ be arbitrary and fixed. By Young's inequality and \eqref{PL.PS1.bound} we have 
\begin{align*}
\notag\int_{\Omega}|\Delta u_n|^{p(x)-1}|\nabla u_n||\nabla \phi_{i,\epsilon}|\diff x&\leq \delta\int_{\Omega}|\Delta u_n|^{p(x)}\diff x+C(\delta)\int_{\Omega}|\nabla u_n|^{p(x)}|\nabla \phi_{i,\epsilon}|^{p(x)}\diff x\\
&\leq C\delta+C(\delta)\int_{\Omega}|\nabla u_n|^{p(x)}|\nabla \phi_{i,\epsilon}|^{p(x)}\diff x,
\end{align*}
where $C(\delta)$ denotes a positive constant depending $\delta$ but independent of $n$ and $\epsilon$. Combining this with \eqref{PL.PS1.s-conv} gives
\begin{equation*}
\limsup_{n\to\infty}\int_{\Omega}|\Delta u_n|^{p(x)-1}|\nabla u_n||\nabla \phi_{i,\epsilon}|\diff x\leq C\delta+C(\delta)\int_{\Omega}|\nabla u|^{p(x)}|\nabla \phi_{i,\epsilon}|^{p(x)}\diff x.
\end{equation*}
Then, arguing as that obtained \eqref{T.ugradphi} we derive
\begin{equation*}
\limsup_{\epsilon\to 0^+}\,	\limsup_{n\to\infty}\int_{\Omega}|\Delta u_n|^{p(x)-1}|\nabla u_n||\nabla \phi_{i,\epsilon}|\diff x\leq C\delta.
\end{equation*}
Since $\delta>0$ was taken arbitrarily we arrive at
\begin{equation}\label{PL.PS1.mu_i-nu_i.3}
\limsup_{\epsilon\to 0^+}\,	\limsup_{n\to\infty}\int_{\Omega}|\Delta u_n|^{p(x)-1}|\nabla u_n||\nabla \phi_{i,\epsilon}|\diff x=0.
\end{equation}
In a similar manner we have
\begin{equation}\label{PL.PS1.mu_i-nu_i.4}
\limsup_{\epsilon\to 0^+}\,	\limsup_{n\to\infty}\int_{\Omega}|\Delta u_n|^{p(x)-1}|u_n||\Delta \phi_{i,\epsilon}|\diff x=0.
\end{equation}
Using \eqref{Est.M0}, \eqref{PL.PS1.s-conv} and applying Proposition~\ref{prop.eqiv.conv}, we have
\begin{align*}
\notag \limsup_{n\to\infty}\, M_0\left(\int_\Omega\frac{1}{p(x)}|\nabla u_n|^{p(x)}\,\diff x\right)\int_{\Omega}\phi_{i,\epsilon}|\nabla u_n|^{p(x)}\diff x\leq &M(t_0)\int_{\Omega}\phi_{i,\epsilon}|\nabla u|^{p(x)}\diff x.
\end{align*}
It follows that
\begin{equation}\label{PL.PS1.mu_i-nu_i.5}
\limsup_{\epsilon\to 0^+}\,	\limsup_{n\to\infty}M_0\left(\int_\Omega\frac{1}{p(x)}|\nabla u_n|^{p(x)}\,\diff x\right)\int_{\Omega}\phi_{i,\epsilon}|\nabla u_n|^{p(x)}\diff x=0.
\end{equation}
Finally, invoking \eqref{Est.M0}, Young's inequality and \eqref{PL.PS1.bound} we deduce that  for an arbitrary $\delta>0$,
\begin{align*}
\notag M_0\bigg(\int_\Omega&\frac{1}{p(x)}|\nabla u_n|^{p(x)}\,\diff x\bigg)\int_{\Omega}|\nabla u_n|^{p(x)-1}|u_n||\nabla \phi_{i,\epsilon}|\diff x\\
&\leq M(t_0)\left[C\delta+C(\delta)\int_{\Omega}|u_n|^{p(x)}|\nabla \phi_{i,\epsilon}|^{p(x)}\diff x\right].
\end{align*}
From this and \eqref{PL.PS1.s-conv} we obtain
\begin{align*}
\limsup_{n\to\infty} \, M_0\bigg(\int_\Omega\frac{1}{p(x)}|\nabla u_n|^{p(x)}\,\diff x\bigg)&\int_{\Omega}|\nabla u_n|^{p(x)-1}|u_n||\nabla \phi_{i,\epsilon}|\diff x\\
&\leq M(t_0)\left[C\delta+C(\delta)\int_{\Omega}|u|^{p(x)}|\nabla \phi_{i,\epsilon}|^{p(x)}\diff x\right].
\end{align*}
Using estimation
$$\int_{\Omega} |u|^{p(x)}|\nabla\phi_{i,\epsilon}|^{p(x)} \diff x\leq 2\big| |u|^{p(\cdot)}\big|_{L^{\frac{p_1^\ast(\cdot)}{p(\cdot)}}(\Omega_{i,\epsilon})}
\big||\nabla \phi_{i,\epsilon}|^{p(\cdot)} \big|_{L^{\frac{N}{p(\cdot)}}
	(B_\epsilon(x_i))},$$
where $\Omega_{i,\epsilon}:=B_\epsilon(x_i)\cap\overline{\Omega}$, and then arguing as that obtained \eqref{T.ugradphi} we infer
\begin{equation}\label{PL.PS1.mu_i-nu_i.6}
\limsup_{\epsilon\to 0^+}\,	\limsup_{n\to\infty}\, M_0\bigg(\int_\Omega\frac{1}{p(x)}|\nabla u_n|^{p(x)}\,\diff x\bigg)\int_{\Omega}|\nabla u_n|^{p(x)-1}|u_n||\nabla \phi_{i,\epsilon}|\diff x=0.
\end{equation}
Taking limit superior as $n\to\infty$ and then as $\epsilon\to 0^+$ in \eqref{PL.PS1.mu_i-nu_i.ineq}, utilizing \eqref{PL.PS1.mu_i-nu_i.1}-\eqref{PL.PS1.mu_i-nu_i.6}, we arrive at
\begin{equation*}
\mu_i=\nu_i.
\end{equation*}
From this and \eqref{PL.PS1.mu-nu} we obtain
	\begin{equation}\label{PL.PS1.estmu_i}
	\nu_i=\mu_i\geq S^{\frac{N}{2}}.
	\end{equation}
	On the other hand, by the boundedness of $\{u_n\}$ and \eqref{Le.PS1.PS-seq} we have
	\begin{align*}
	c+o_n(1)=&\widetilde{J}_\lambda(u_n)-\frac{1}{q^-}\left\langle\widetilde{J}_\lambda'(u_n) ,u_n\right\rangle\\
	\geq& \left(\frac{1}{p^+}-\frac{1}{q^-}\right)\int_\Omega |\Delta u_n|^{p(x)} \diff x+\widehat{M}_0\left(\int_\Omega \frac{1}{p(x)}|\nabla u_n|^{p(x)} \diff x\right)\\
	&-\frac{1}{q^-} M_0\bigg(\int_\Omega\frac{1}{p(x)}|\nabla u_n|^{p(x)}\,\diff x\bigg)\int_{\Omega}|\nabla u_n|^{p(x)}\diff x\\
	&- \frac{\lambda}{q^-}\int_{\Omega}[q^-F(x,u_n)-f(x,u_n)u_n]\diff x.
\end{align*}
From this, \eqref{Est.M0}, \eqref{Est.hat.M0} and $(\mathcal{F}_2)$ we obtain
	\begin{align*}
	c+o_n(1)\geq&\left(\frac{1}{p^+}-\frac{1}{q^-}\right)\int_\Omega |\Delta u_n|^{p(x)} \diff x+\frac{1}{p^+}\left(m_0-\frac{M(t_0)}{q^-}\right) \int_{\Omega}|\nabla u_n|^{p(x)}\diff x\\
	&-\frac{\lambda }{q^-}\int_{\Omega}\big[C_3|u_n|^{r(x)}+C_4|u_n|^{p(x)}\big]\diff x.
	\end{align*}
Then, using \eqref{mu1} we have
	\begin{align*}
	c+o_n(1)\geq&\left(\frac{1}{p^+}-\frac{1}{q^-}\right)\int_\Omega |\Delta u_n|^{p(x)} \diff x+\frac{1}{p^+}\left(m_0-\frac{M(t_0)}{q^-}\right)\mu_1 \int_{\Omega}|u_n|^{p(x)}\diff x\\
	&-\frac{\lambda }{q^-}\int_{\Omega}\big[C_3|u_n|^{r(x)}+C_4|u_n|^{p(x)}\big]\diff x.
	\end{align*}
Note that $\lambda\in (0,\lambda_1)$ implies $\frac{\lambda C_4}{q^-}< \frac{1}{2p^+}\left(m_0-\frac{M(t_0)}{q^-}\right)\mu_1$ and hence, the last inequality yields
	\begin{align*}
c+o_n(1)\geq&\left(\frac{1}{p^+}-\frac{1}{q^-}\right)\int_\Omega |\Delta u_n|^{p(x)} \diff x+\frac{1}{2p^+}\left(m_0-\frac{M(t_0)}{q^-}\right)\mu_1 \int_{\Omega}|u_n|^{p(x)}\diff x\\
&-\frac{\lambda C_3 }{q^-}\int_{\Omega}|u_n|^{r(x)}\diff x.
\end{align*}

 Passing to the limit as $n\to\infty$ in the last inequality, taking \eqref{PL.PS1.mu}, \eqref{PL.PS1.s-conv} and \eqref{PL.PS1.estmu_i} into account, we infer
\begin{equation*}
c\geq\,  \left(\frac{1}{p^+}-\frac{1}{q^-}\right)S^{\frac{N}{2}}+\frac{1}{2p^+}\left(m_0-\frac{M(t_0)}{q^-}\right)\mu_1 \int_{\Omega}|u|^{p(x)}\diff x-\frac{\lambda C_3}{q^-}\int_{\Omega}|u|^{r(x)}\diff x.
\end{equation*}
By Proposition~\ref{prop.Holder}, we have
$$\int_{\Omega}|u|^{r(x)}\diff x\leq 2|1|_{\frac{l(\cdot)}{l(\cdot)-1}}\big||u|^{r(\cdot)}\big|_{l(\cdot)},$$
where $l(x):=\frac{p(x)}{r(x)}$. Then, it follows from the last two estimates that 
\begin{equation}\label{PL.PS1.c}
c\geq\,  \left(\frac{1}{p^+}-\frac{1}{q^-}\right)S^{\frac{N}{2}}+a \int_{\Omega}|u|^{p(x)}\diff x-\lambda b \big||u|^{r(\cdot)}\big|_{l(\cdot)},
\end{equation}
where $a:=	\frac{1}{2p^+}\left(m_0-\frac{M(t_0)}{q^-}\right)\mu_1$ and $b:= \frac{2C_3}{q^-}|1|_{\frac{l(\cdot)}{l(\cdot)-1}}$. We consider the following cases.
	\begin{itemize}
		\item Case $\big||u|^{r(\cdot)}\big|_{l(\cdot)}\geq 1$. Then \eqref{PL.PS1.c} and Proposition~\ref{norm-modular} yield
	\end{itemize}
	$$c\geq\,  \left(\frac{1}{p^+}-\frac{1}{q^-}\right)S^{\frac{N}{2}}+a \xi^{l^-}- b \lambda \xi=:h_1(\xi)\ \text{with}\ \xi:=\big||u|^{r(\cdot)}\big|_{l(\cdot)}\geq 1.$$
	Thus,
	$$c\geq \underset{\xi\geq 0}{\min}\ h_1(\xi)=h_1\left(\bigg(\frac{b\lambda}{a l^-}\bigg)^{\frac{1}{l^--1}}\right),$$
	i.e.,
	$$c\geq \left(\frac{1}{p^+}-\frac{1}{q^-}\right)S^{\frac{N}{2}}-(l^-)^{-\frac{l^-}{l^--1}}(l^--1)a^{-\frac{1}{l^--1}}b^{\frac{l^-}{l^--1}}\lambda^{\frac{l^-}{l^--1}}.$$
	\begin{itemize}
		\item Case $\big||u|^{r(\cdot)}\big|_{l(\cdot)}< 1$. Then \eqref{PL.PS1.c} and Proposition~\ref{norm-modular} yield
	\end{itemize}
	$$c\geq\,  \left(\frac{1}{p^+}-\frac{1}{q^-}\right)S^{\frac{N}{2}}+a \xi^{l^+}- b \lambda \xi=:h_2(\xi)\ \text{with}\ 0\leq \xi:=\big||u|^{r(\cdot)}\big|_{l(\cdot)}<1.$$
	Thus,
	$$c\geq \underset{\xi\geq 0}{\min}\ h_2(\xi)=h_2\left(\bigg(\frac{b\lambda}{a l^+}\bigg)^{\frac{1}{l^+-1}}\right),$$
	i.e.,
	$$c\geq \left(\frac{1}{p^+}-\frac{1}{q^-}\right)S^{\frac{N}{2}}-(l^+)^{-\frac{l^+}{l^+-1}} (l^+-1)a^{-\frac{1}{l^+-1}}b^{\frac{l^+}{l^+-1}}\lambda^{\frac{l^+}{l^+-1}}.$$
	Therefore, in any case, we obtain
	$$c\geq \left(\frac{1}{p^+}-\frac{1}{q^-}\right)S^{\frac{N}{2}}-K\max\left\{\lambda^{\frac{l^+}{l^+-1}},\lambda^{\frac{l^-}{l^--1}}\right\},$$
	where $K:=\underset{*\in\{+,-\}}{\max}(l^*)^{-\frac{l^*}{l^*-1}}(l^*-1)a^{-\frac{1}{l^*-1}}b^{\frac{l^*}{l^*-1}}$.
	However, this is in contradiction with \eqref{Le.PS1.c}. That is, we have shown that  $I=\emptyset$ and hence, \eqref{PL.PS1.nu} yields  $\int_{\Omega}|u_n|^{q(x)}\diff x\to \int_{\Omega}|u|^{q(x)}\diff x.$ Then, invoking \eqref{PL.PS1.a.e} and Lemma~\ref{L.brezis-lieb} we obtain
	\begin{equation*}
	u_n\to u\ \ \text{in}\ \ L^{q(\cdot)}(\Omega).
	\end{equation*}
	Combining this with \eqref{Le.PS1.PS-seq}, \eqref{PL.PS1.w-conv}, and \eqref{PL.PS1.s-conv}, invoking H\"older type inequality (Proposition~\ref{prop.Holder}), we obtain
	\begin{align*}
	\int_\Omega &\left(|\Delta u_n|^{p(x)-2}\Delta u_n-|\Delta u|^{p(x)-2}\Delta u\right)\left(\Delta u_n-\Delta u\right)\diff x\\
	=&\langle \widetilde{J}_\lambda' (u_n),u_n-u\rangle- M_0\left(\int_\Omega\frac{1}{p(x)}|\nabla u_n|^{p(x)}\,\diff x\right)\int_\Omega|\nabla u_n|^{p(x)-2}\nabla u_n\cdot\nabla (u_n-u)\,\diff x\\
	&+\lambda\int_\Omega f(x,u_n)(u_n-u)\,\diff x+\int_\Omega |u_n|^{q(x)-2}u_n(u_n-u)\,\diff x\\
	&-\int_\Omega |\Delta u|^{p(x)-2}\Delta u\left(\Delta u_n-\Delta u\right)\diff x\to 0,
	\end{align*}
	and hence, we derive $u_n\to u$ in $X$ via a standard argument (see, for example, \cite[Proof of Lemma 3.4]{Chung}). The proof is complete.
\end{proof} 

By Proposition~\ref{critical.imb}, there exists a constant $C_7>1$ such that
\begin{equation}\label{norms-norm}
\max\left\{|u|_{r(\cdot)},|u|_{q(\cdot)}\right\}\leq C_7\|u\|_0,\quad \forall \, u\in X.
\end{equation}
Set 
\begin{equation}\label{lambda2-k0}
k_0:=\min\{1,m_0\}\  \ \text{and}\ \ \lambda_2:=\frac{k_0 \mu_1}{2p^+C_6},
\end{equation}
where $C_6$ is given by $(\mathcal{F}_2)$. Let $\lambda\in(0,\lambda_2)$. By \eqref{mu1} we have
\begin{equation*}
\lambda C_6\int_\Omega |u|^{p(x)}\diff x\leq \lambda C_6\mu_1^{-1}\int_\Omega |\nabla u|^{p(x)}\diff x\leq \frac{k_0}{2}\int_\Omega \frac{1}{p(x)}|\nabla u|^{p(x)}\diff x,\quad \forall u\in X.
\end{equation*}
Combining this with \eqref{Est.hat.M0} and $(\mathcal{F}_2)$ gives
\begin{align*}
\notag \widetilde{J}_\lambda(u)&\geq \int_\Omega \frac{1}{p(x)}|\Delta u|^{p(x)} \diff x+m_0\int_\Omega \frac{1}{p(x)}|\nabla u|^{p(x)} \diff x-\lambda C_5\int_\Omega |u|^{r(x)}\diff x\\
&\quad-\lambda C_6\int_\Omega |u|^{p(x)}\diff x-\int_\Omega \frac{1}{q(x)}|u|^{q(x)} \diff x\\
&\geq \frac{k_0}{2}\int_\Omega \frac{1}{p(x)}\left[|\Delta u|^{p(x)}+|\nabla u|^{p(x)}\right] \diff x-\lambda C_5\int_\Omega |u|^{r(x)}\diff x-\frac{1}{q^-}\int_\Omega |u|^{q(x)} \diff x,\quad \forall u\in X.
\end{align*}
By invoking Proposition~\ref{norm-modular} then using \eqref{norms-norm} and \eqref{norm_0-modular} we deduce from the last inequality that
\begin{align}
\notag\widetilde{J}_\lambda(u)&\geq \frac{k_0}{2}\min\left\{\|u\|_0^{p^-},\|u\|_0^{p^+}\right\}-\lambda C_5\max\left\{|u|_{r(\cdot)}^{r^+},|u|_{r(\cdot)}^{r^-}\right\}-\frac{1}{q^-}\max\left\{|u|_{q(\cdot)}^{q^+},|u|_{q(\cdot)}^{q^-}\right\}\\
\notag&\geq \frac{k_0}{2}\min\left\{\|u\|_0^{p^-},\|u\|_0^{p^+}\right\}-\lambda C_5 C_7^{r^+}\max\left\{\|u\|_0^{r^-},\|u\|_0^{r^+}\right\}-\frac{C_7^{q^+}}{q^-}\max\left\{\|u\|_0^{q^-},\|u\|_0^{q^+}\right\}.
\end{align}
Thus,
 \begin{equation}\label{PTcc.gi}
 \widetilde{J}_\lambda(u)\geq  g_\lambda(\|u\|_0)\ \ \text{for} \ \ \|u\|_0\leq1,
  \end{equation} 
 where $g_\lambda\in C(\R_0^+)$ is given by
 $$g_\lambda(t):=\frac{k_0}{2}t^{p^+}-\lambda C_5 C_7^{r^+}t^{r^-}-\frac{C_7^{q^+}}{q^-}t^{q^-}.$$
Rewrite $g_\lambda(t)=C_5 C_7^{r^+}t^{r^-}\left(h(t)-\lambda\right)$ with $$h(t):=a_0t^{p^+-r^-}-b_0t^{q^--r^-},$$ 
where $a_0:=k_0(2C_5C_7^{r^+})^{-1}>0$ and $b_0:=(q^-C_5)^{-1}C_7^{q^+-r^+}>0$. Clearly,
\begin{align}\label{lambda3}
\notag\lambda_3:&=\max_{t\geq 0}\, h(t)=h\left(\left[\frac{(p^+-r^-)a_0}{(q^--r^-)b_0}\right]^{\frac{1}{q^--p^+}}\right)\\
&=a_0^{\frac{q^--r^-}{q^--p^+}}b_0^{\frac{r^--p^+}{q^--p^+}}\left(\frac{p^+-r^-}{q^--r^-}\right)^{\frac{p^+-r^-}{q^--p^+}}\frac{q^--p^+}{q^--r^-}>0
\end{align}
and for any $\lambda\in (0,\lambda_3)$, $g_\lambda(t)$ has only positive roots $t_1=t_1(\lambda)$ and $t_2=t_2(\lambda)$ with $$0<t_1<\left[\frac{(p^+-r^-)a_0}{(q^--r^-)b_0}\right]^{\frac{1}{q^--p^+}}=:t_*<t_2.$$
Obviously, on $\R_0^+$ the function $g_\lambda(t)$ is only negative  on $(0,t_1)\cup (t_2,\infty)$. Moreover, we have
\begin{equation}\label{lim.t1}
\lim_{\lambda\to 0^+}t_1(\lambda)=0.
\end{equation}
By \eqref{lim.t1}, we find $\lambda_4>0$ such that
\begin{equation}\label{t_1}
t_1<\min\left\{2^{\frac{-1}{p^-}},\left(2^{-1}t_*^{p^+}\right)^{1/p^-},t_0^{1/p^-}\right\},\quad \forall \lambda\in (0,\lambda_4).
\end{equation}
Set 
\begin{equation}\label{lambda_*^{(1)}}
\lambda_\ast^{(1)}:=\min\left\{\lambda_1,\lambda_2,\lambda_3,\lambda_4\right\},
\end{equation} 
where $\lambda_1,\lambda_2,\lambda_3$ and $\lambda_4$ are given by \eqref{lambda1}, \eqref{lambda2-k0}, \eqref{lambda3} and \eqref{t_1}, respectively. For each $\lambda\in \left(0,\lambda_\ast^{(1)}\right)$, we consider the truncated functional $T_\lambda: X\to\R$ given by
\begin{align*}\label{T_lambda}
\notag T_\lambda(u):=&\int_\Omega \frac{1}{p(x)}|\Delta u|^{p(x)} \diff x+\widehat{M}_0\left(\int_\Omega \frac{1}{p(x)}|\nabla u|^{p(x)} \diff x\right)\\
&-\phi\left(\int_\Omega \frac{1}{p(x)}\left[|\Delta u|^{p(x)} +|\nabla u|^{p(x)}\right] \diff x\right)\left[\lambda\int_\Omega F(x,u) 
\diff x+\int_\Omega \frac{1}{q(x)}|u|^{q(x)} \diff x\right]
\end{align*}
for $u\in X$, where $\phi\in C_c^\infty(\R)$, $0\leq \phi(t)\leq 1$ for all $t\in\R$, $\phi(t)=1$ for $|t|\leq t_1^{p^-}$ and $\phi(t)=0$ for $|t|\geq 2t_1^{p^-}$. Clearly, $T_\lambda\in C^1(X,\R)$ and it holds that
\begin{equation}\label{T_lambda.Est1}
T_\lambda(u)\geq \widetilde{J}_\lambda(u),\quad \forall u\in X,
\end{equation}
\begin{equation}\label{T_lambda.Est2}
T_\lambda(u)=\widetilde{J}_\lambda(u) \ \ \text{for all} \ \ u\in X \ \ \text{with} \ \ \int_\Omega \frac{1}{p(x)}\left[|\Delta u|^{p(x)} +|\nabla u|^{p(x)}\right] \diff x<t_1^{p^-},
\end{equation}
and
\begin{equation}\label{T_lambda.Est3}
T_\lambda(u)=\int_\Omega \frac{1}{p(x)}|\Delta u|^{p(x)} \diff x+\widehat{M}_0\left(\int_\Omega \frac{1}{p(x)}|\nabla u|^{p(x)} \diff x\right)
\end{equation}
  for all $u\in X$ with $\int_\Omega \frac{1}{p(x)}\left[|\Delta u|^{p(x)} +|\nabla u|^{p(x)}\right] \diff x>2t_1^{p^-}$.
\begin{lemma}\label{T_lambda(u)<0}
	Let $\lambda\in \left(0,\lambda_\ast^{(1)}\right)$. Then, we have $\int_\Omega \frac{1}{p(x)}\left[|\Delta u|^{p(x)} +|\nabla u|^{p(x)}\right] \diff x<t_1^{p^-}$ and hence, $T_\lambda(u)=\widetilde{J}_\lambda(u)$ whenever $T_\lambda(u)<0$.
\end{lemma}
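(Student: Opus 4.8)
The plan is to reduce the whole statement to a single implication: that $T_\lambda(u)<0$ forces the modular quantity $\sigma(u):=\int_\Omega \frac{1}{p(x)}\big[|\Delta u|^{p(x)}+|\nabla u|^{p(x)}\big]\diff x$ to satisfy $\sigma(u)<t_1^{p^-}$. Once this is in hand, the asserted identity $T_\lambda(u)=\widetilde{J}_\lambda(u)$ is immediate from \eqref{T_lambda.Est2}. To prove the implication I would argue by contradiction, assuming $\sigma(u)\ge t_1^{p^-}$, and split into the two regimes $\sigma(u)>2t_1^{p^-}$ and $t_1^{p^-}\le\sigma(u)\le 2t_1^{p^-}$, aiming to show $T_\lambda(u)\ge 0$ in each. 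Throughout, the needed structural facts (existence of the roots $t_1<t_*<t_2$, the sign pattern of $g_\lambda$, the bound \eqref{t_1}, and the lower estimate \eqref{PTcc.gi}) are available because $\lambda<\lambda_\ast^{(1)}\le\min\{\lambda_2,\lambda_3,\lambda_4\}$.

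In the first regime the cutoff $\phi$ evaluated at $\sigma(u)$ vanishes, so by \eqref{T_lambda.Est3} the functional reduces to $T_\lambda(u)=\int_\Omega\frac{1}{p(x)}|\Delta u|^{p(x)}\diff x+\widehat{M}_0\!\left(\int_\Omega\frac{1}{p(x)}|\nabla u|^{p(x)}\diff x\right)$, which is manifestly nonnegative since $\widehat{M}_0\ge 0$ by \eqref{Est.hat.M0}. This contradicts $T_\lambda(u)<0$, so this regime is vacuous.

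The middle regime is where the real work lies, because there the cutoff need not equal $1$ and no closed form for $T_\lambda(u)$ is available; instead I would fall back on the global lower bound \eqref{PTcc.gi}, which however is only valid when $\|u\|_0\le 1$. So the first task is to certify $\|u\|_0<1$: by \eqref{t_1} one has $2t_1^{p^-}<1$, hence $\sigma(u)<1$, and if $\|u\|_0\ge 1$ held then the lower estimate in \eqref{norm_0-modular} would force $\sigma(u)\ge\|u\|_0^{p^-}\ge 1$, a contradiction. With $\|u\|_0<1$, \eqref{norm_0-modular} becomes $\|u\|_0^{p^+}\le\sigma(u)\le\|u\|_0^{p^-}$; the right inequality together with $\sigma(u)\ge t_1^{p^-}$ gives $\|u\|_0\ge t_1$, while the left inequality together with $\sigma(u)\le 2t_1^{p^-}<t_*^{p^+}$ (again from \eqref{t_1}) gives $\|u\|_0<t_*<t_2$. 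Thus $\|u\|_0\in[t_1,t_2)$, a set on which $g_\lambda\ge 0$ since $g_\lambda$ is negative only on $(0,t_1)\cup(t_2,\infty)$. Now \eqref{T_lambda.Est1} and \eqref{PTcc.gi} give $T_\lambda(u)\ge\widetilde{J}_\lambda(u)\ge g_\lambda(\|u\|_0)\ge 0$, contradicting $T_\lambda(u)<0$ once more.

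Since both regimes have been excluded, $\sigma(u)<t_1^{p^-}$ must hold, and \eqref{T_lambda.Est2} finishes the proof. I expect the only delicate point to be the middle regime: one must resist estimating $T_\lambda$ directly through the cutoff term and instead verify $\|u\|_0<1$ first so that \eqref{PTcc.gi} applies, after which the remaining inequalities are just bookkeeping with the thresholds hard-wired into \eqref{t_1} and \eqref{norm_0-modular}.
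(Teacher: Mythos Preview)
Your proof is correct and uses the same ingredients as the paper's argument: the sign of $T_\lambda$ in the region $\sigma(u)>2t_1^{p^-}$ via \eqref{T_lambda.Est3}, the lower bound \eqref{PTcc.gi} together with the sign pattern of $g_\lambda$, and the threshold calibrations in \eqref{t_1} and \eqref{norm_0-modular}. The only difference is organizational: the paper first shows $\|u\|_0\le 1$, then uses $g_\lambda(\|u\|_0)\le\widetilde{J}_\lambda(u)<0$ to force $\|u\|_0\in(0,t_1)\cup(t_2,\infty)$ and eliminates the second interval via \eqref{T_lambda.Est3}, whereas you split on $\sigma(u)$ and, in the intermediate range, pin $\|u\|_0$ into $[t_1,t_*)$ to conclude $g_\lambda(\|u\|_0)\ge 0$ and hence $T_\lambda(u)\ge 0$. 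Both routes are equivalent rearrangements of the same contrapositive.
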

\begin{proof}
	Suppose that $T_\lambda(u)<0$. Then, we have $\widetilde{J}_\lambda(u)<0$ due to \eqref{T_lambda.Est1}. We claim that $\|u\|_0\leq 1$. Indeed, suppose on the contrary that $\|u\|_0>1$. Then, $\int_\Omega \frac{1}{p(x)}\big[|\Delta u|^{p(x)} +|\nabla u|^{p(x)}\big] \diff x>1>2t_1^{p^-}$ due to \eqref{norm_0-modular} and \eqref{t_1} and therefore, \eqref{T_lambda.Est3} yields
	\begin{equation*}
\int_\Omega \frac{1}{p(x)}|\Delta u|^{p(x)} \diff x+\widehat{M}_0\left(\int_\Omega \frac{1}{p(x)}|\nabla u|^{p(x)} \diff x\right)<0,
	\end{equation*}
a contradiction. Thus, we have $\|u\|_0\leq 1$ and hence, it follows from \eqref{PTcc.gi} that $g_\lambda(\|u\|_0)\leq \widetilde{J}_\lambda(u)<0$. Thus, $\|u\|_0<t_1$ or $\|u\|_0>t_2>t_*$ due to \eqref{norm_0-modular} and \eqref{t_1}. If $\|u\|_0>t_*$, then $\int_\Omega \frac{1}{p(x)}\big[|\Delta u|^{p(x)} +|\nabla u|^{p(x)}\big] \diff x>\|u\|_0^{p^+}>t_*^{p^+}>2t_1^{p^-}$ due to \eqref{norm_0-modular} and \eqref{t_1}; hence, $T_\lambda(u)\geq 0$ due to \eqref{T_lambda.Est3}, which is absurd. Thus, $\|u\|_0<t_1$ and hence, $\int_\Omega \frac{1}{p(x)}\left[|\Delta u|^{p(x)} +|\nabla u|^{p(x)}\right] \diff x<t_1^{p^-}$. The proof is complete.

\end{proof}
Next, we will show that $T_\lambda$ admits a sequence of critical points $\{u_n\}$ with $\int_\Omega \frac{1}{p(x)}\big[|\Delta u_n|^{p(x)} +|\nabla u_n|^{p(x)}\big] \diff x<t_1^{p^-}$via genus theory. Let us denote by $\gamma (A)$ the genus of a closed subset $A\subset X\setminus\{0\}$ that is symmetric with respect to the origin, that is, $u\in A$ implies $-u\in A$ (see \cite{Rab1986} for a review of the definition and properties of the genus).
\begin{lemma}\label{genus.k}
	Let $\lambda\in \left(0,\lambda_\ast^{(1)}\right)$. Then, for each $k \in \mathbb{N}$, there exists $\epsilon>0$
	such that
	$$
	\gamma(T_\lambda^{-\epsilon}) \geq k,
	$$
	where $T_\lambda^{-\epsilon}:=\{u \in X: T_\lambda(u) \leq -\epsilon\}$. \end{lemma}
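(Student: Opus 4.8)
The plan is to combine a finite-dimensional reduction with Krasnosel'skii's genus, exploiting two facts: near the origin the truncated functional $T_\lambda$ agrees with $\widetilde{J}_\lambda$ (by \eqref{T_lambda.Est2}), and on any fixed finite-dimensional subspace the concave part, controlled from below through $\int_\Omega|u|^{r(x)}\,\diff x$ via $(\mathcal{F}_2)$, beats the leading modular term along small dilations because $r^+<p^-$.

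Fix $k\in\N$. Since $X$ is infinite dimensional, I would choose a $k$-dimensional subspace $E_k\subset X$. On $E_k$ all norms are equivalent, and $w\mapsto\int_\Omega|w|^{r(x)}\,\diff x$ is continuous (as $X\hookrightarrow L^{r(\cdot)}(\Omega)$ and modular convergence equals norm convergence there) and strictly positive on the compact sphere $\Sigma_k:=\{w\in E_k:\|w\|_0=1\}$, the positivity holding because each $w\in\Sigma_k$ is nonzero; hence there is $\beta_k>0$ with $\int_\Omega|w|^{r(x)}\,\diff x\ge\beta_k$ for every $w\in\Sigma_k$. Next, for $w\in\Sigma_k$ and $0<t<1$ I would bound $\widetilde{J}_\lambda(tw)$ from above: using $\widehat{M}_0(s)\le M(t_0)s$ from \eqref{Est.hat.M0}, the inequality $F(x,s)\ge\frac{C_2}{q^-}|s|^{r(x)}$ from $(\mathcal{F}_2)$, discarding the nonnegative term $\int_\Omega\frac{1}{q(x)}|tw|^{q(x)}\,\diff x$, and then using $t^{p(x)}\le t^{p^-}$ and $t^{r(x)}\ge t^{r^+}$ for $0<t<1$ together with \eqref{norm_0-modular} (which gives $\int_\Omega\frac{1}{p(x)}[|\Delta w|^{p(x)}+|\nabla w|^{p(x)}]\,\diff x=1$ since $\|w\|_0=1$), one arrives at
$$\widetilde{J}_\lambda(tw)\le c_1 t^{p^-}-c_2\lambda\beta_k\, t^{r^+},$$
with $c_1:=\max\{1,M(t_0)\}$ and $c_2:=C_2/q^-$, both independent of $w$ and $t$.

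Since $r^+<p^-$, I can pick $t_k\in(0,1)$ so small that $c_1 t_k^{\,p^--r^+}<\tfrac12 c_2\lambda\beta_k$ and, simultaneously, $t_k<t_1$, where $t_1=t_1(\lambda)$ is as in \eqref{t_1}. For such $t_k$ and every $w\in\Sigma_k$ we then get $\widetilde{J}_\lambda(t_kw)\le-\epsilon_k$ with $\epsilon_k:=\tfrac12 c_2\lambda\beta_k t_k^{\,r^+}>0$, and moreover, by \eqref{norm_0-modular}, $\int_\Omega\frac{1}{p(x)}[|\Delta(t_kw)|^{p(x)}+|\nabla(t_kw)|^{p(x)}]\,\diff x\le t_k^{\,p^-}<t_1^{\,p^-}$, so \eqref{T_lambda.Est2} yields $T_\lambda(t_kw)=\widetilde{J}_\lambda(t_kw)\le-\epsilon_k$. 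Hence the set $S_{t_k}:=t_k\Sigma_k=\{u\in E_k:\|u\|_0=t_k\}$ is contained in $T_\lambda^{-\epsilon_k}$.

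Finally I would invoke genus theory. The set $S_{t_k}$ is closed, symmetric and avoids the origin, and the odd homeomorphism $u\mapsto u/t_k$ carries it onto the standard sphere $S^{k-1}$, so $\gamma(S_{t_k})=k$. The sublevel set $T_\lambda^{-\epsilon_k}$ is closed ($T_\lambda$ being continuous), symmetric ($T_\lambda$ being even, by $(\mathcal{F}_1)$), and does not contain $0$ (since $T_\lambda(0)=0>-\epsilon_k$); therefore monotonicity of the genus gives $\gamma(T_\lambda^{-\epsilon_k})\ge\gamma(S_{t_k})=k$, and $\epsilon:=\epsilon_k$ is as required. The only delicate point — everything else is routine — is that $t_k$ must be small on two counts at once: small enough for the $t^{r^+}$ term to dominate the $t^{p^-}$ term, and small enough to remain in the region where $T_\lambda=\widetilde{J}_\lambda$ (recall only $T_\lambda\ge\widetilde{J}_\lambda$ holds globally by \eqref{T_lambda.Est1}, so an upper bound on $\widetilde{J}_\lambda$ alone does not bound $T_\lambda$); both are simultaneously achievable precisely because $r^+<p^-$.
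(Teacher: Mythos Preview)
Your proof is correct and follows essentially the same route as the paper: pick a $k$-dimensional subspace, show $T_\lambda$ (which coincides with $\widetilde{J}_\lambda$ near the origin by \eqref{T_lambda.Est2}) is bounded above by $c_1 t^{p^-}-c_2\lambda\, t^{r^+}$ on a small sphere via $(\mathcal{F}_2)$ and \eqref{Est.hat.M0}, then use $r^+<p^-$ and the genus of the sphere. The only cosmetic difference is that the paper bounds $|u|_{r(\cdot)}$ below via the equivalence of norms on $X_k$, whereas you bound the modular $\int_\Omega|w|^{r(x)}\,\diff x$ directly by compactness of the unit sphere; your explicit remark that $t_k$ must be chosen small on two counts simultaneously (domination of the $t^{r^+}$ term and remaining in the region where $T_\lambda=\widetilde{J}_\lambda$) is a helpful clarification.
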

Note that by $(\mathcal{F}_1)$ and the definition of $T_\lambda$, $T_\lambda^{-\epsilon}$ is a closed subset of $X\setminus\{0\}$  and is symmetric with respect to the origin; hence, $\gamma(T_\lambda^{-\epsilon})$ is well defined.
\begin{proof}[Proof of Lemma~\ref{genus.k}]
	Let $k \in \mathbb{N}$ and let $X_{k}$ be a subspace of $X$ of dimension $k$. Since all norms on $X_k$ are mutually equivalent, we find $\delta_k>t_1^{-1}\ (>1)$ such that
	$$
	\delta_k^{-1}|u|_{r(\cdot)}\leq \|u\|_0\leq \delta_k|u|_{r(\cdot)},\quad \forall u\in X_k.
	$$
	For $u\in X_k$ with $\|u\|_0<\delta_k^{-1}$ (thus, $\max\{\|u\|_0,|u|_{r(\cdot)}\}<1$ and $\int_\Omega \frac{1}{p(x)}\big[|\Delta u|^{p(x)} +|\nabla u|^{p(x)}\big] \diff x\leq \|u\|_0^{p^-}<t_1^{p^-}$), we deduce from \eqref{T_lambda.Est2}, \eqref{Est.hat.M0}, $(\mathcal{F}_2)$, \eqref{norm_0-modular} and Proposition~\ref{norm-modular} that
	\begin{align*}
	T_\lambda(u)\leq (M(t_0)+1)\|u\|_0^{p^-}-\frac{\lambda C_2}{q^-}|u|_{r(\cdot)}^{r^+}\leq (M(t_0)+1)\|u\|_0^{p^-}-\frac{\lambda C_2 \delta_k^{-r^+}}{q^-}\|u\|_0^{r^+}.
	\end{align*}
	That is,
	\begin{align*}
	T_\lambda(u)\leq (M(t_0)+1)\|u\|_0^{r^+}\left(\|u\|_0^{p^--r^+}-\frac{\lambda C_2 \delta_k^{-r^+}}{q^-(M(t_0)+1)}\right).
	\end{align*}
	Thus, by taking $\rho$ with $0<\rho<\min\left\{\delta_k^{-1},\left[\frac{\lambda C_2 \delta_k^{-r^+}}{q^-(M(t_0)+1)}\right]^{\frac{1}{p^--r^+}}\right\}$ and 
	$\epsilon:=-(M(t_0)+1)\rho^{p^-}+\frac{\lambda C_2 \delta_k^{-r^+}}{q^-}\rho^{r^+}>0$ we have
	$$
	T_\lambda(u)<-\epsilon < 0,
	$$
	for all $u\in {S_\rho}:=\{u\in X_k: \| u \|_0=\rho \}$. Since
	 $S_\rho\subset T_\lambda^{-\epsilon}$,  we obtain
	$$\gamma( T_\lambda^{-\epsilon})\geq \gamma(S_\rho)=k
	$$
	and this completes the proof.
\end{proof}

Now for each $k \in \mathbb{N}$, define
\begin{gather*}
\Gamma_{k}:=\{A \subset X\backslash\{0\}: A \text{ is closed }, A=-A
\text{ and }  \gamma(A) \geq k\}
\end{gather*}
and 
$$
c_{k}:= \inf_{A\in \Gamma_{k}} \sup_{u \in
	A}T_\lambda(u).
$$
As in \cite{Fig-Nas.EJDE2016}, we derive the following lemma.
\begin{lemma}\label{Le.c_k<0}
	For each $k \in \mathbb{N}$, the number $c_{k}$ is negative.
\end{lemma}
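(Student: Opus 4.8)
The statement to prove is that for each $k\in\mathbb N$, the minimax value $c_k:=\inf_{A\in\Gamma_k}\sup_{u\in A}T_\lambda(u)$ is strictly negative. The natural strategy is to exhibit, for each $k$, a single admissible set $A\in\Gamma_k$ on which $T_\lambda$ is uniformly bounded above by a negative constant; then by definition of the infimum, $c_k<0$. Lemma~\ref{genus.k} has already furnished exactly such a set: it produces, for every $k$, an $\epsilon>0$ with $\gamma(T_\lambda^{-\epsilon})\ge k$, where $T_\lambda^{-\epsilon}=\{u\in X: T_\lambda(u)\le-\epsilon\}$.

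\emph{Step 1.} Fix $k\in\mathbb N$ and apply Lemma~\ref{genus.k} with this $k$ to obtain $\epsilon=\epsilon(k)>0$ such that $\gamma(T_\lambda^{-\epsilon})\ge k$. The set $A:=T_\lambda^{-\epsilon}$ is closed in $X$ (since $T_\lambda\in C^1(X,\mathbb R)$, in particular continuous), it does not contain $0$ because $T_\lambda(0)=0>-\epsilon$, and it is symmetric: by $(\mathcal F_1)$ the primitive $F(x,\cdot)$ is even, and $T_\lambda$ is built only from even functionals of $u$ (the modular terms $\int\frac1{p(x)}|\Delta u|^{p(x)}$, $\widehat M_0(\int\frac1{p(x)}|\nabla u|^{p(x)})$, the cut-off $\phi$ of an even argument, and $\int\frac1{q(x)}|u|^{q(x)}$), so $T_\lambda(-u)=T_\lambda(u)$. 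Hence $A=-A$, and together with $\gamma(A)\ge k$ we get $A\in\Gamma_k$.

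\emph{Step 2.} Since $A\in\Gamma_k$, by definition of $c_k$ as an infimum over $\Gamma_k$,
\[
c_k\le \sup_{u\in A}T_\lambda(u)\le -\epsilon<0,
\]
the middle inequality being immediate from the definition of $A=T_\lambda^{-\epsilon}$. This proves $c_k<0$ for every $k$.

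\emph{Main obstacle.} There is essentially no analytic obstacle here — the lemma is a short bookkeeping consequence of Lemma~\ref{genus.k} and the monotonicity properties of $\Gamma_k$ and $c_k$. The only point requiring a moment's care is the verification that $A=T_\lambda^{-\epsilon}$ is genuinely an admissible competitor, i.e. that it is closed, symmetric, and bounded away from $0$; this is exactly the remark already made right after the statement of Lemma~\ref{genus.k}, and it relies on $(\mathcal F_1)$ for the symmetry and on $T_\lambda(0)=0$ for excluding the origin. Once that is in place, the negativity of $c_k$ is automatic.
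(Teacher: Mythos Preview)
Your proof is correct and follows exactly the standard argument the paper defers to (it cites \cite{Fig-Nas.EJDE2016} without giving details): use the set $T_\lambda^{-\epsilon}$ produced by Lemma~\ref{genus.k} as an admissible competitor in $\Gamma_k$, whence $c_k\le\sup_{u\in T_\lambda^{-\epsilon}}T_\lambda(u)\le-\epsilon<0$. The verifications that $T_\lambda^{-\epsilon}$ is closed, symmetric, and excludes the origin are exactly as you describe and are already noted in the paper right after Lemma~\ref{genus.k}.
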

Let $\lambda_\ast^{(2)}>0$ be such that
\begin{equation*}
\left(\frac{1}{p^+}-\frac{1}{q^-}\right)S^{\frac{N}{2}}-K\max\left\{(\lambda_\ast^{(2)})^{\frac{l^+}{l^+-1}},(\lambda_\ast^{(2)})^{\frac{l^-}{l^--1}}\right\}>0,
\end{equation*}
where $K,l$ are given by \eqref{K}. Set 
\begin{equation}\label{lambda_*}
\lambda_\ast:=\min\left\{\lambda_\ast^{(1)},\lambda_\ast^{(2)}\right\},
\end{equation}
where $\lambda_\ast^{(1)}$ is given by \eqref{lambda_*^{(1)}}. The next lemma is derived from Lemma~\ref{Le.PS1} and the deformation lemma, see \cite{Rab1986}.

\begin{lemma}\label{Le.Seq.sol}
	For any $\lambda\in\left(0,\lambda_\ast\right)$, if $c=c_{k}=c_{k+1}=\dots =c_{k+m}$ for some $m \in \mathbb{N}$, then
	$$ \gamma(K_{c})\geq m+1, $$
	where $K_{c}:=\{u \in X\backslash\{0\}: T_\lambda'(u)=0  \text{ and }
	T_\lambda(u)=c\}$.
\end{lemma}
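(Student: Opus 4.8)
The statement is the classical Lusternik--Schnirelmann multiplicity lemma for even functionals, and the plan is to prove it by the standard genus argument of \cite{Rab1986}; the only point needing care is that the truncated functional $T_\lambda$ satisfies the Palais--Smale condition at the level $c$. Recall that by Lemma~\ref{Le.c_k<0} we have $c=c_k<0$, so it suffices to control negative levels. Note also that, since $\lambda\in(0,\lambda_\ast)$ with $\lambda_\ast\le\lambda_\ast^{(2)}$ and $t\mapsto K\max\{t^{l^+/(l^+-1)},t^{l^-/(l^--1)}\}$ is increasing on $(0,\infty)$, the defining inequality of $\lambda_\ast^{(2)}$ yields $\left(\frac{1}{p^+}-\frac{1}{q^-}\right)S^{N/2}-K\max\{\lambda^{l^+/(l^+-1)},\lambda^{l^-/(l^--1)}\}>0$, so every real number $c'<0$ automatically satisfies the threshold condition \eqref{Le.PS1.c}.

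First I would show that $T_\lambda$ satisfies $(PS)_{c'}$ for every $c'<0$. Let $\{u_n\}\subset X$ satisfy $T_\lambda(u_n)\to c'<0$ and $T_\lambda'(u_n)\to 0$. For $n$ large one has $T_\lambda(u_n)<0$, so Lemma~\ref{T_lambda(u)<0} gives $\int_\Omega\frac{1}{p(x)}\big[|\Delta u_n|^{p(x)}+|\nabla u_n|^{p(x)}\big]\diff x<t_1^{p^-}$; in particular $\{u_n\}$ is bounded in $X$, and on this open sublevel set the cut-off satisfies $\phi\equiv 1$ and $\phi'\equiv 0$, so $T_\lambda(u_n)=\widetilde{J}_\lambda(u_n)$ by \eqref{T_lambda.Est2} and, by continuity of the modular, $T_\lambda'(u_n)=\widetilde{J}_\lambda'(u_n)$ as well. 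Hence $\{u_n\}$ is a bounded sequence with $\widetilde{J}_\lambda(u_n)\to c'$, $\widetilde{J}_\lambda'(u_n)\to 0$, and $c'$ satisfying \eqref{Le.PS1.c}, so Lemma~\ref{Le.PS1} produces a convergent subsequence. Running the same reasoning with $o_n(1)\equiv 0$ shows that any sequence in $K_c$ admits a subsequence converging to a point of $K_c$; since $K_c$ is closed (as $T_\lambda\in C^1$) and $0\notin K_c$ (because $T_\lambda(0)=0\ne c$), it follows that $K_c$ is a compact symmetric subset of $X\setminus\{0\}$, so $\gamma(K_c)$ is a well-defined finite number.

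Next I would argue by contradiction, assuming $\gamma(K_c)\le m$. By the continuity property of the genus there is a closed symmetric neighborhood $U$ of $K_c$ with $0\notin U$ and $\gamma(U)=\gamma(K_c)\le m$. Since $T_\lambda\in C^1(X,\R)$ is even and satisfies $(PS)_{c'}$ for all $c'<0$, the equivariant deformation lemma of \cite{Rab1986} supplies $\bar\epsilon\in(0,-c)$ and an odd homeomorphism $\eta$ of $X$ with $\eta\big(T_\lambda^{c+\bar\epsilon}\setminus U\big)\subset T_\lambda^{c-\bar\epsilon}$, where $T_\lambda^{a}:=\{u\in X:T_\lambda(u)\le a\}$ (note $[c-\bar\epsilon,c+\bar\epsilon]\subset(-\infty,0)$, so $(PS)$ holds throughout). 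By definition of $c=c_{k+m}$ choose $A\in\Gamma_{k+m}$ with $\sup_{u\in A}T_\lambda(u)<c+\bar\epsilon$, i.e. $A\subset T_\lambda^{c+\bar\epsilon}$. Then $\overline{A\setminus U}$ is closed and symmetric, and subadditivity of the genus gives $\gamma\big(\overline{A\setminus U}\big)\ge\gamma(A)-\gamma(U)\ge(k+m)-m=k$, while oddness and continuity of $\eta$ give $\gamma\big(\eta(\overline{A\setminus U})\big)\ge\gamma\big(\overline{A\setminus U}\big)\ge k$, so $\eta(\overline{A\setminus U})\in\Gamma_k$. But $\eta(\overline{A\setminus U})\subset T_\lambda^{c-\bar\epsilon}$ forces $c_k\le\sup_{u\in\eta(\overline{A\setminus U})}T_\lambda(u)\le c-\bar\epsilon<c=c_k$, a contradiction. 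Hence $\gamma(K_c)\ge m+1$.

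The main obstacle is the Palais--Smale verification in the second paragraph: one must make sure the cut-off $\phi$ neither destroys boundedness of $(PS)_{c'}$-sequences nor changes the differential on the relevant sublevel set, which is precisely what Lemma~\ref{T_lambda(u)<0} together with the choices recorded in \eqref{t_1} and \eqref{lambda_*} guarantee, and it is the reason the quantity cut off by $\lambda_\ast^{(2)}$ enters. Everything else — the continuity, monotonicity and subadditivity of the genus and the equivariant deformation lemma — is entirely standard (see \cite{Rab1986}), so no further computation is required.
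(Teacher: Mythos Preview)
Your proof is correct and follows exactly the same route as the paper: verify $c<0$ via Lemma~\ref{Le.c_k<0}, observe the threshold in \eqref{Le.PS1.c} is met for every negative level by the definition of $\lambda_\ast^{(2)}$, use Lemma~\ref{T_lambda(u)<0} to transfer the $(PS)$ sequence from $T_\lambda$ to $\widetilde J_\lambda$ and invoke Lemma~\ref{Le.PS1}, and then apply the deformation lemma. The only difference is that the paper abbreviates the last two steps to ``$K_c$ is compact; the rest is standard'' with a pointer to \cite{Ber-Gar-Per.1996,Fig-Nas.EJDE2016}, whereas you write out the Lusternik--Schnirelmann contradiction in full.
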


\begin{proof}
	 Let $\lambda\in\left(0,\lambda_\ast\right)$. Then, by Lemma~\ref{Le.c_k<0} and the choice of $\lambda_\ast$ we have
 $$c<0<\left(\frac{1}{p^+}-\frac{1}{q^-}\right)S^{\frac{N}{2}}-K\max\{\lambda^{\frac{l^+}{l^+-1}},\lambda^{\frac{l^-}{l^--1}}\}.$$
  Thus, by Lemmas~\ref{Le.PS1} and \ref{T_lambda(u)<0} it follows that $K_{c}$ is a compact set. The conclusion then follows by a standard argument using the deformation lemma (see, for example, \cite[Lemma 4.4]{Ber-Gar-Per.1996} or \cite[Lemma 3.10]{Fig-Nas.EJDE2016}). 
\end{proof}
\begin{proof}[\textbf{Proof of Theorem \ref{Theo.cc} completed}]
	Let $\lambda_*$ be defined as in \eqref{lambda_*}. Let $\lambda\in (0,\lambda_*)$. By Lemma~\ref{Le.Seq.sol}, $T_\lambda$ admits a sequence $\{u_n\}$ of critical points with $T_\lambda(u_n)<0$ for all $n\in\N$. By Lemma~\ref{T_lambda(u)<0} and \eqref{t_1}, $\{u_n\}$ are also critical points of $\widetilde{J}_\lambda$ with 
	$\int_\Omega \frac{1}{p(x)}|\nabla u_n|^{p(x)} \diff x<t_0$; hence, $\{u_n\}$ are solutions to problem~\eqref{e1.1}. Now, denote by $u_\lambda$ one of $u_n$. By Lemma~\ref{T_lambda(u)<0} again, we have
	$$\int_\Omega \frac{1}{p(x)}\left[|\Delta u_\lambda|^{p(x)} +|\nabla u_\lambda|^{p(x)}\right] \diff x<t_1(\lambda)^{p^-}.$$
	From this and \eqref{lim.t1}, we obtain
	$$\lim_{\lambda\to 0^+}\int_\Omega \frac{1}{p(x)}\left[|\Delta u_\lambda|^{p(x)} +|\nabla u_\lambda|^{p(x)}\right] \diff x=0.$$
	Hence, 
	$$\lim_{\lambda\to 0^+}\|u_\lambda\|_0 =0$$
	in view of \eqref{norm_0-modular}. The desired conclusion then follows due to the equivalence of the norms $\|\cdot\|$ and $\|\cdot\|_0$. The proof is complete.
\end{proof}

\subsection{The $p(\cdot)$-superlinear problem}
In this subsection, we prove the existence of a nontrivial solution to problem~\eqref{e1.1} when the nonlinearity is of $p(\cdot)$-superlinear type. Throughout this section, we always assume that the assumptions $(\mathcal{P})$, $(\mathcal{Q})$, $(\mathcal{C})$, $(\mathcal{M})$, $(\mathcal{F}_0)$, $(\mathcal{F}_3)$ and $(\mathcal{F}_4)$ hold. In this subsection, we define the truncation $M_0(t)$ of $M(t)$ given in \eqref{Def.M0} and the truncated energy functional $\widetilde{J}_\lambda$ given in \eqref{widetilde{J}_lambda} by fixing $t_0>0$ such that 
\begin{equation}\label{Est.Mt0}
m_0 < M(t_0) <\frac{\theta}{p^+}m_0, 
\end{equation}
where $\theta$ is given by assumption $(\mathcal{F}_4)$. We first prove that $\widetilde{J}_\lambda$ has the mountain pass geometry that is stated in the following lemma.

\begin{lemma}\label{lem5.2}
For $\lambda>0$ given, the following assertions hold:
\begin{itemize}
	\item [(i)] there exist two positive constants $\rho$ and $\delta$ such that $\widetilde{J}_\lambda (u) \geq \delta>0$ for all $u\in X$ with $\|u\|_0 =\rho$;
	\item [(ii)]there exists $e_\lambda\in X$ with $\|e_\lambda\|_0 > \rho$ satisfying $\widetilde{J}_\lambda(e_\lambda)<0$.
\end{itemize}
\end{lemma}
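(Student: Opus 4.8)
The plan is to verify the two geometric conditions of the mountain pass theorem for the truncated functional $\widetilde{J}_\lambda$ defined in \eqref{widetilde{J}_lambda}, using the modular--norm estimates of Proposition~\ref{norm-modular}, the continuous imbeddings of Proposition~\ref{critical.imb}, and the lower/upper bounds \eqref{Est.hat.M0} on $\widehat{M}_0$. First I would treat part (i). Starting from the definition of $\widetilde{J}_\lambda$, I use the trivial bound $\int_\Omega\frac{1}{q(x)}|u|^{q(x)}\,\diff x\le\frac{1}{q^-}\int_\Omega|u|^{q(x)}\,\diff x$, the lower bound $\widehat{M}_0(t)\ge m_0 t$ from \eqref{Est.hat.M0}, and the growth estimate on $F$ coming from $(\mathcal{F}_0)$ (integrating the bound on $f$ gives $|F(x,t)|\le C_1|t|+\frac{C_1}{\alpha^-}|t|^{\alpha(x)}$, hence $\int_\Omega|F(x,u)|\,\diff x\le C(|u|_{1}+\rho_{\alpha(\cdot)}(u))$, which is controlled by $\max\{|u|_{\alpha(\cdot)}^{\alpha^-},|u|_{\alpha(\cdot)}^{\alpha^+}\}$ plus a lower-order term). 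Combining these, for $\|u\|_0\le 1$ I get, via \eqref{norm_0-modular}, Proposition~\ref{norm-modular} and the imbeddings $X\hookrightarrow L^{\alpha(\cdot)}(\Omega)$, $X\hookrightarrow L^{q(\cdot)}(\Omega)$,
\[
\widetilde{J}_\lambda(u)\ \ge\ \min\{1,m_0\}\,\|u\|_0^{p^+}-\lambda C\,\|u\|_0^{\alpha^-}-C'\|u\|_0^{q^-}-\lambda C''\|u\|_0
\]
for small $\|u\|_0$; here I should be a little careful with the linear-in-$|t|$ piece of $F$, but since $r$-type exponents are absent in $(\mathcal F_0)$, one can absorb the $|u|_1\le C\|u\|_0$ term by noting it is dominated for small $\|u\|_0$ only if $p^+<1$, which fails — so instead I would sharpen the use of $(\mathcal F_0)$: actually under $(\mathcal F_3)$ (available in this subsection) one has $|F(x,t)|\le \e|t|^{p(x)}+C_\e|t|^{\alpha(x)}$ for every $\e>0$, which removes the linear term entirely. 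With that, since $p^+<\alpha^-$ and $p^+<q^-$ (both strict by $(\mathcal F_0)$ and $(\mathcal Q)$), choosing $\e$ small enough that $\e$ times the imbedding constant is below $\frac12\min\{1,m_0\}$ and then $\rho\in(0,1)$ small enough makes $\widetilde{J}_\lambda(u)\ge\delta>0$ on $\|u\|_0=\rho$.

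For part (ii), I would fix any $\varphi\in C_c^\infty(\Omega)\setminus\{0\}$ and examine $t\mapsto\widetilde{J}_\lambda(t\varphi)$ as $t\to+\infty$. The first three terms are estimated from above using $\widehat{M}_0(s)\le M(t_0)s$ from \eqref{Est.hat.M0} and Proposition~\ref{norm-modular}, giving a bound of order $t^{p^+}$ (times constants depending on $\varphi$) for $t\ge 1$. For the nonlinear term I invoke $(\mathcal{F}_4)$: the inequality $\theta F(x,t)\le f(x,t)t$ together with $F>0$ implies the standard growth-from-below $F(x,t)\ge c(x)|t|^\theta$ for $|t|\ge 1$ with $c(x)=\inf_{|s|=1}F(x,s)>0$, hence $\lambda\int_\Omega F(x,t\varphi)\,\diff x\ge \lambda c_0 t^\theta\int_{\{|\varphi|\ge 1\}}|\varphi|^\theta\,\diff x - C$ for $t$ large. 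Since $\theta>p^+$ by $(\mathcal{F}_4)$, this superlinear term dominates, so $\widetilde{J}_\lambda(t\varphi)\to-\infty$ as $t\to\infty$; taking $e_\lambda=t\varphi$ for $t$ large enough gives both $\|e_\lambda\|_0>\rho$ and $\widetilde{J}_\lambda(e_\lambda)<0$.

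The main obstacle, and the only place requiring genuine care, is part (i): one must make sure the nonlinear and critical contributions are of strictly higher order than the leading coercive term near the origin. The coercive part is only $\gtrsim\|u\|_0^{p^+}$ (not $\|u\|_0^{p^-}$) when $\|u\|_0\le 1$, so it is essential that $p^+<\alpha^-\le q^-$ strictly — which is exactly guaranteed by $(\mathcal{F}_0)$ and $(\mathcal{Q})$ — and that the potentially troublesome linear-in-$t$ term in the naive bound on $F$ be eliminated using the vanishing condition $(\mathcal{F}_3)$ rather than $(\mathcal{F}_0)$ alone. Once these exponent inequalities are in place, the rest is the routine polynomial estimate $A\rho^{p^+}-B\rho^{\alpha^-}-C\rho^{q^-}\ge\delta>0$ for $\rho$ small. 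Part (ii) is standard and presents no difficulty beyond correctly applying $(\mathcal{F}_4)$ to obtain the power-$\theta$ lower bound on $F$.
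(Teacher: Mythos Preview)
Your approach to part~(i) is essentially the paper's: use $(\mathcal{F}_3)$ together with $(\mathcal{F}_0)$ to obtain $|F(x,t)|\le \e|t|^{p(x)}+C(\e)|t|^{\alpha(x)}$, absorb the $\e$-term into the leading modular, and then exploit $p^+<\alpha^-$, $p^+<q^-$. There is, however, one genuine imprecision. When you write ``choosing $\e$ small enough that $\e$ times the imbedding constant is below $\tfrac12\min\{1,m_0\}$,'' you need to say \emph{which} inequality you are invoking. A norm-level imbedding $|u|_{p(\cdot)}\le C\|u\|_0$ is not enough: passing back to modulars gives $\int_\Omega|u|^{p(x)}\diff x\lesssim\|u\|_0^{p^-}$ for small $\|u\|_0$, which dominates the coercive term $\|u\|_0^{p^+}$ whenever $p^-<p^+$, regardless of how small $\e$ is. What makes the absorption work in the paper is the \emph{modular}-level Poincar\'e inequality \eqref{mu1}, $\mu_1\int_\Omega|u|^{p(x)}\diff x\le\int_\Omega|\nabla u|^{p(x)}\diff x$, which requires assumption $(\mathcal{P})$; with it one has $\lambda\e\int_\Omega|u|^{p(x)}\diff x\le \lambda\e\mu_1^{-1}p^+\int_\Omega\frac{1}{p(x)}|\nabla u|^{p(x)}\diff x$, and now choosing $\e=\frac{k_0\mu_1}{2\lambda p^+}$ absorbs this cleanly before passing to norms. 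You should make this dependence on \eqref{mu1} (and hence on $(\mathcal{P})$) explicit.

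For part~(ii) your argument is correct but takes a different route from the paper. You use $(\mathcal{F}_4)$ to extract the lower bound $F(x,t)\ge c(x)|t|^\theta$ and conclude from $\theta>p^+$; the paper instead simply drops the $F$-term (using $F\ge 0$ from $(\mathcal{F}_4)$) and relies on the critical term, obtaining $\widetilde{J}_\lambda(t\widetilde u_0)\le t^{p^+}(1+M(t_0))-\frac{t^{q^-}}{q^+}\int_\Omega|\widetilde u_0|^{q(x)}\diff x\to-\infty$ via $(\mathcal{Q})$. The paper's choice is slightly cleaner since it avoids the measurability and positivity considerations for $c(x)$, and it feeds directly into the later computation of $c_\lambda$ in Lemma~\ref{lem5.3}; your version would also work but gives a $\lambda$-dependent negative term, which is harmless here.
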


\begin{proof}
(i) From $(\mathcal{F}_0)$ and $(\mathcal{F}_3)$, for any $\epsilon>0$, there exists a constant $C(\epsilon)>0$ such that the following estimation
\begin{equation}\label{e5.40}
|F(x,t)| \leq \epsilon|t|^{p(x)} + C(\epsilon) |t|^{\alpha(x)}
\end{equation}
holds true for a.e. $x\in \Omega$ an all $t\in \R$. On the other hand, by Proposition \ref{critical.imb}, we find $C_8>1$ such that 
\begin{equation}\label{norm_0-modular2}
\max\left\{|u|_{\alpha(\cdot)}, |u|_{q(\cdot)}\right\} \leq C_8\|u\|_0, \quad \forall u\in X.
\end{equation}
By relations \eqref{Est.hat.M0} and \eqref{mu1} and \eqref{e5.40}, we have 
\begin{align*}
\notag \widetilde{J}_\lambda(u) & \geq k_0\int_\Omega \frac{1}{p(x)}\left[|\Delta u|^{p(x)}+|\nabla u|^{p(x)}\right]\, \diff x  - \lambda \int_\Omega \left(\epsilon|u|^{p(x)}+C(\epsilon)|u|^{\alpha(x)}\right)\, \diff x \\
\notag & \hspace{1cm}- \int_\Omega \frac{1}{q(x)}|u|^{q(x)}\, \diff x \\
\notag & \geq k_0\int_\Omega \frac{1}{p(x)}\left[|\Delta u|^{p(x)}+|\nabla u|^{p(x)}\right]\, \diff x- \frac{\lambda \epsilon p^+}{\mu_1} \int_\Omega \frac{1}{p(x)}|\nabla u|^{p(x)}\, \diff x\\
&\hspace{1cm} - \lambda C(\epsilon) \int_\Omega |u|^{\alpha(x)}\, \diff x - \frac{1}{q^-}\int_\Omega |u|^{q(x)}\, \diff x,\quad \forall u\in X,
\end{align*}
where $k_0: =\min\{1,m_0\}$. Hence, by choosing $\epsilon = \frac{k_0\mu_1}{2\lambda p^+}>0$, then invoking Proposition~\ref{norm-modular},  \eqref{norm_0-modular} and \eqref{norm_0-modular2}, the preceding estimation yields 
\begin{align*}
\notag\widetilde{J}_\lambda(u)& \geq \frac{k_0}{2}\int_\Omega \frac{1}{p(x)}\left[|\Delta u|^{p(x)}+|\nabla u|^{p(x)}\right]\, \diff x- \lambda C(\epsilon) \int_\Omega |u|^{\alpha(x)}\, \diff x - \frac{1}{q^-}\int_\Omega |u|^{q(x)}\, \diff x\\
\notag & \geq \frac{k_0}{2} \min \left\{\|u\|_0^{p^+},\|u\|_0^{p^-}\right\} - \lambda C(\epsilon)\max\left\{|u|_{\alpha(\cdot)}^{\alpha^+},|u|_{\alpha(\cdot)}^{\alpha^-}\right\} -\frac{1}{q^-}\max\left\{|u|_{q(\cdot)}^{q^+},|u|_{q(\cdot)}^{q^-}\right\} \\
& \geq \frac{k_0}{2}\min \left\{\|u\|_0^{p^+},\|u\|_0^{p^-}\right\} - \lambda C(\epsilon)C_8^{\alpha^+}\max\left\{\|u\|_0^{\alpha^+},\|u\|_0^{\alpha^-}\right\} - \frac{C_8^{q^+}}{q^-}\max\left\{\|u\|_0^{q^+},\|u\|_0^{q^-}\right\}. 
\end{align*}
Hence, for any $u\in X$ with $\|u\|_0 = \rho\in (0,1)$, it holds that
\begin{align}\label{eq5.42}
\notag \widetilde{J}_\lambda(u) \geq &\,\frac{k_0}{2}\rho^{p^+}-\lambda C(\epsilon)C_8^{\alpha^+}\rho^{\alpha^-}-\frac{C_8^{q^+}}{q^-}\rho^{q^-}\\
& = \left(\frac{k_0}{2}-\lambda C(\epsilon)C_8^{\alpha^+}\rho^{\alpha^--p^+}-\frac{C_8^{q^+}}{q^-}\rho^{q^--p^+}\right)\rho^{p^+}=:\delta.
\end{align}
Since $p^+ < \min \left\{\alpha^-, q^-\right\}$, it follows from \eqref{eq5.42} that for $\rho>0$ sufficiently small  we have 
$$\widetilde{J}_\lambda(u) \geq \delta >0,\quad \forall u \in X \ \text{with}\ \|u\|_0 = \rho.$$ 

(ii) Fix $\widetilde{u}_0 \in X$ such that $u>0$ in $\Omega$ and $\|\widetilde{u}_0\|_0 = 1$. Using \eqref{Est.hat.M0} and $(\mathcal{F}_4)$, for any $t>1$, we have
\begin{align}\label{eq5.43}
\notag \widetilde{J}_\lambda(t\widetilde{u}_0) & = \int_\Omega \frac{1}{p(x)}|\Delta (t\widetilde{u}_0)|^{p(x)}\, \diff x+\widehat{M}_0\left(\int_\Omega \frac{1}{p(x)}|\nabla (t\widetilde{u}_0)|^{p(x)}\, \diff x\right) \\
\notag & \hspace{1cm}-\lambda\int_\Omega F(x,t\widetilde{u}_0)\, \diff x-\int_\Omega \frac{1}{q(x)}|t\widetilde{u}_0|^{q(x)}\,\diff x \\
 & \leq t^{p^+}(1+ M(t_0)) - \frac{t^{q^-}}{q^+}\int_\Omega |\widetilde{u}_0|^{q(x)}\,\diff x.
\end{align}
Since $p^+<q^-$, it follows from \eqref{eq5.43} that there exits $t_3(\lambda)>\rho$ such that 
\begin{equation}\label{e5.44}
\widetilde{J}_\lambda(t\widetilde{u}_0)<0,\quad \forall t\geq t_3(\lambda). 
\end{equation}
Then, by setting $e_\lambda := t_3(\lambda)\widetilde{u}_0$ we have that $e_\lambda \in X$ satisfying $\|e_\lambda\|_0 >\rho$ and $\widetilde{J}_\lambda(e_\lambda) < 0$. This completes the proof of Lemma \ref{lem5.2}.
\end{proof}

For each $\lambda>0$, let $e_\lambda$ be as in the preceding lemma and define
\begin{equation}\label{c_lambda}
c_\lambda := \inf_{g \in G_\lambda}\max_{t\in [0,1]}\widetilde{J}_\lambda(g(t)),
\end{equation}
where  
$$G_\lambda := \left\{g \in C([0,1], X):~ g(0) = 0,~ g(1) = e_\lambda\right\}.$$ As a consequence of Lemma \ref{lem5.2} and \cite[Lemma 3.1]{Garcia} we have the following.
\begin{lemma}\label{le.c_lambda}
	The number $c_\lambda$ is positive and there exists a sequence $\{u_n\}\subset X$ such that 
	$$
	\widetilde{J}_\lambda(u_n) \to c_{\lambda}~ \text{ and } ~ \widetilde{J}'_\lambda(u_n) \to 0~ \text{ as } ~ n\to\infty.
	$$ 
\end{lemma}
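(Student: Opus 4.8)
The plan is to apply the mountain pass theorem in its general form, which automatically yields a Palais--Smale sequence at the minimax level once the geometric hypotheses of Lemma~\ref{lem5.2} are in place. First I would invoke Lemma~\ref{lem5.2}(i): the functional $\widetilde{J}_\lambda$ is of class $C^1$ (noted right after \eqref{widetilde{J}_lambda}), $\widetilde{J}_\lambda(0)=0$, and there exist $\rho,\delta>0$ with $\widetilde{J}_\lambda(u)\geq\delta>0$ whenever $\|u\|_0=\rho$. Next, by Lemma~\ref{lem5.2}(ii) there is $e_\lambda\in X$ with $\|e_\lambda\|_0>\rho$ and $\widetilde{J}_\lambda(e_\lambda)<0$. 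Consequently $0$ and $e_\lambda$ lie on opposite sides of the sphere $\{\|u\|_0=\rho\}$, so every path $g\in G_\lambda$ must cross that sphere; this gives $\max_{t\in[0,1]}\widetilde{J}_\lambda(g(t))\geq\delta$ for all $g\in G_\lambda$, whence $c_\lambda\geq\delta>0$. This proves the positivity assertion.

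For the existence of the Palais--Smale sequence I would appeal to the mountain pass theorem without the (PS) condition (the Ambrosetti--Rabinowitz minimax principle; here the excerpt cites \cite[Lemma 3.1]{Garcia}, which packages exactly this). Since $\widetilde{J}_\lambda\in C^1(X,\R)$, $X$ is a Banach space, the geometry just verified holds, and $c_\lambda$ is defined in \eqref{c_lambda} as the mountain pass minimax value over $G_\lambda$, that result produces a sequence $\{u_n\}\subset X$ with $\widetilde{J}_\lambda(u_n)\to c_\lambda$ and $\widetilde{J}_\lambda'(u_n)\to 0$ in $X^\ast$ as $n\to\infty$. Assembling these two pieces gives the full statement of the lemma.

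The only genuinely non-routine point is the verification of the mountain pass geometry, and that has already been done in Lemma~\ref{lem5.2}; here the role of the truncated Kirchhoff term is crucial, since the lower bound \eqref{Est.hat.M0} (namely $\widehat{M}_0(t)\geq m_0 t$) together with the Poincaré-type inequality \eqref{mu1} and the subcritical control of $F$ from $(\mathcal{F}_0)$ and $(\mathcal{F}_3)$ is what makes $\widetilde{J}_\lambda$ coercive near the origin along the sphere. For this lemma itself the argument is essentially a citation: no compactness of $\widetilde{J}_\lambda$ is claimed or needed at this stage (that is deferred to the analysis of the (PS) sequence using Theorem~\ref{Theo.ccp}), so there is no real obstacle to overcome beyond correctly recording that the minimax value is attained asymptotically by a (PS) sequence.
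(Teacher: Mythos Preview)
Your proposal is correct and matches the paper's approach exactly: the paper simply states that the lemma is ``a consequence of Lemma~\ref{lem5.2} and \cite[Lemma 3.1]{Garcia},'' and your write-up is precisely the standard unpacking of that citation (mountain pass geometry forces $c_\lambda\geq\delta>0$, and the minimax principle without the (PS) condition yields the (PS)$_{c_\lambda}$ sequence).
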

Furthermore, we have the following property for $c_\lambda$. This result, together with Theorem~\ref{Theo.ccp}, will help us to overcome the lack of compactness in the $p(\cdot)$-superlinear case of $f$.
\begin{lemma}\label{lem5.3}
It holds that
$$
\lim_{\lambda \to \infty}c_\lambda = 0,
$$
where $c_\lambda$ is given by \eqref{c_lambda}.
\end{lemma}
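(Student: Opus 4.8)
The plan is to estimate the mountain pass level $c_\lambda$ from above by a quantity that is independent of the particular geometry of $\widetilde{J}_\lambda$ and tends to $0$ as $\lambda\to\infty$. Since $c_\lambda>0$ by Lemma~\ref{le.c_lambda}, it suffices to produce, for each $\lambda$, a single path $g\in G_\lambda$ whose maximum value of $\widetilde{J}_\lambda$ along it is small. The natural choice is the ray $t\mapsto t\widetilde{u}_0$ for a fixed $\widetilde{u}_0\in X$ with $\widetilde{u}_0>0$ in $\Omega$ and $\|\widetilde{u}_0\|_0=1$, suitably reparametrized on $[0,1]$ so that it reaches $e_\lambda=t_3(\lambda)\widetilde u_0$. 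Thus $c_\lambda\le\max_{t\ge 0}\widetilde{J}_\lambda(t\widetilde u_0)$, and the task reduces to showing the right-hand side goes to $0$.

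Next I would bound $\widetilde J_\lambda(t\widetilde u_0)$ from above for $t\ge 0$. Using \eqref{Est.hat.M0}, dropping the (nonnegative) term $\int_\Omega\frac{1}{q(x)}|t\widetilde u_0|^{q(x)}\diff x$ is not allowed since we need an upper bound keeping it, but we \emph{can} drop the term $-\lambda\int_\Omega F(x,t\widetilde u_0)\diff x$ only if $F\ge 0$; by $(\mathcal F_4)$ indeed $F(x,t)\ge 0$, hence $-\lambda\int_\Omega F(x,t\widetilde u_0)\diff x\le 0$ and we may discard it. Therefore, for $t\ge 1$,
\begin{equation*}
\widetilde J_\lambda(t\widetilde u_0)\le (1+M(t_0))\,t^{p^+}-\frac{t^{q^-}}{q^+}\int_\Omega|\widetilde u_0|^{q(x)}\diff x=:h(t),
\end{equation*}
which is the estimate \eqref{eq5.43}, and $h$ is a fixed function (independent of $\lambda$) with $h(t)\to-\infty$ as $t\to\infty$ since $q^->p^+$. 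Hence $\max_{t\ge 1}\widetilde J_\lambda(t\widetilde u_0)\le\max_{t\ge 1}h(t)=:C_0<\infty$. This bounds $c_\lambda$ uniformly but does not yet give the decay. To get decay, I would instead exploit the mountain pass level's variational characterization combined with Lemma~\ref{lem5.2}(i): from \eqref{eq5.42} the level $\delta$ along the sphere $\|u\|_0=\rho$ can itself be made to depend on $\lambda$, but the cleaner route is the following. Define $\beta(t):=\widetilde J_\lambda(t\widetilde u_0)$ for $t\in[0,\infty)$; then $c_\lambda\le\max_{t\ge0}\beta(t)$. Write out $\beta(t)$ explicitly using \eqref{Est.hat.M0}:
\begin{equation*}
\beta(t)\le (1+M(t_0))\max\{t^{p^-},t^{p^+}\}-\lambda\int_\Omega F(x,t\widetilde u_0)\diff x.
\end{equation*}
Now pick $t_\lambda$ to be the maximizer. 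By $(\mathcal F_4)$ the map $t\mapsto t^{-\theta}F(x,t\widetilde u_0(x))$ is nondecreasing, so $F(x,t\widetilde u_0)\ge t^\theta F(x,\widetilde u_0)$ for $t\ge 1$ wherever $\widetilde u_0(x)\ge 1$; more simply, there are constants $c_*>0$ and $t_*\ge 1$ with $\int_\Omega F(x,t\widetilde u_0)\diff x\ge c_* t^\theta$ for $t\ge t_*$, and $\theta>p^+$. A straightforward calculus exercise on $g(t)=At^{p^+}-\lambda c_* t^\theta$ (with $A:=1+M(t_0)$) shows its maximum over $t\ge t_*$ is attained at an interior point for $\lambda$ large and equals $C(A,c_*,p^+,\theta)\,\lambda^{-p^+/(\theta-p^+)}\to 0$, while on the compact range $0\le t\le t_*$ one has $\beta(t)\le A\max\{t_*^{p^-},t_*^{p^+}\}$ minus $\lambda$ times a term bounded below by $c_1 t^{p}$...

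The main obstacle is handling the transition region $t\in[0,t_*]$: there $F(x,t\widetilde u_0)$ need not dominate a positive power uniformly, so $\beta(t)$ could a priori be of size $O(1)$ there. The fix is to choose the maximizer carefully: since $c_\lambda\le\max_t\beta(t)$ and we are free to restrict the path, note that for $\lambda$ large the maximizer $t_\lambda$ of $\beta$ must satisfy $t_\lambda\to 0$ (because the $-\lambda F$ term forces it down), so eventually $t_\lambda<1$ and we are in the regime where $\beta(t)\le A t^{p^-}-\lambda\int_\Omega F(x,t\widetilde u_0)\diff x$. Using $(\mathcal F_4)$ to get $F(x,s)\ge c_2|s|^{\theta}$ for $|s|\le 1$ is false in general; instead one uses only $F\ge 0$ together with a lower bound $\int_\Omega F(x,t\widetilde u_0)\diff x\ge \omega(t)$ for some continuous $\omega$ with $\omega>0$ on $(0,\infty)$ coming from the continuity of $F$ and positivity of $\widetilde u_0$, then observes that $\sup_{0\le t\le 1}\big(At^{p^-}-\lambda\omega(t)\big)\to 0$ as $\lambda\to\infty$ by an elementary argument (for any $\eta>0$, $At^{p^-}<\eta$ for $t<t_\eta$, while for $t\ge t_\eta$, $\lambda\omega(t)\ge\lambda\min_{[t_\eta,1]}\omega\to\infty$). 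Combining the two regimes gives $c_\lambda\le\max\{o(1),C\lambda^{-p^+/(\theta-p^+)}\}\to 0$, completing the proof; the care needed is precisely in making the lower bound $\omega(t)$ rigorous and in patching the estimates on $[0,1]$ and $[1,\infty)$ without circularity.
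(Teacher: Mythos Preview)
Your final argument is correct, though the write-up wanders before settling on it. Both you and the paper start from the same observation: with $\widetilde u_0\in X$, $\widetilde u_0>0$, $\|\widetilde u_0\|_0=1$, one has $0<c_\lambda\le\max_{t\ge 0}\widetilde J_\lambda(t\widetilde u_0)$, so it suffices to show this one-dimensional maximum tends to $0$.

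From that point the two arguments diverge. The paper lets $t_\lambda$ denote the maximizer of $t\mapsto\widetilde J_\lambda(t\widetilde u_0)$ and uses the Euler identity $\langle\widetilde J_\lambda'(t_\lambda\widetilde u_0),t_\lambda\widetilde u_0\rangle=0$. From that equation together with \eqref{Est.M0} and $(\mathcal F_4)$ one reads off that $\{t_\lambda\}$ is bounded (comparing $t_\lambda^{p^\pm}$ against $t_\lambda^{q^\pm}$), and then a short contradiction using $\lambda\int_\Omega f(x,t_\lambda\widetilde u_0)t_\lambda\widetilde u_0\,\diff x\to\infty$ forces $t_\lambda\to 0$, whence $c_\lambda\le C\max\{t_\lambda^{p^-},t_\lambda^{p^+}\}\to 0$. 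This is clean because the critical-point equation immediately isolates the competing powers.

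Your route avoids the Euler equation entirely: you bound $\beta(t):=\widetilde J_\lambda(t\widetilde u_0)\le A\max\{t^{p^-},t^{p^+}\}-\lambda\omega(t)$ with $\omega(t)=\int_\Omega F(x,t\widetilde u_0)\,\diff x$, and then treat $[0,1]$ and $[1,\infty)$ separately. On $[1,\infty)$ the Ambrosetti--Rabinowitz growth $\omega(t)\ge\omega(1)\,t^\theta$ gives a maximum that is eventually negative. On $[0,1]$ you use only that $\omega$ is continuous with $\omega>0$ on $(0,\infty)$ (which follows from $(\mathcal F_4)$ and $\widetilde u_0>0$) together with an $\eta$-$t_\eta$ splitting. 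This is more elementary and slightly more robust: it never needs the derivative of $\widetilde J_\lambda$ along the ray, only the sign information $F>0$. The paper's approach, on the other hand, is shorter and yields the auxiliary fact $t_\lambda\to 0$ directly, which you only obtain implicitly. Two minor clean-ups: the inequality $F(x,t\widetilde u_0)\ge t^\theta F(x,\widetilde u_0)$ for $t\ge 1$ holds for every $x$ with $\widetilde u_0(x)>0$ (not only where $\widetilde u_0(x)\ge 1$), and the false starts before your final paragraph should be pruned.
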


\begin{proof}
Let $\{\lambda_n\}$ be an abitrary sequence of real positive numbers such that $\lambda_n \to \infty$ as $n\to \infty$. By the proof of Lemma \ref{lem5.2}, for each $n \in \mathbb{N}$, there exists $t_{\lambda_n} > 0$ such that $\widetilde{J}_{\lambda_n} (t_{\lambda_n}\widetilde{u}_0) = \max_{t \geq 0}\widetilde{J}_{\lambda_n}(t\widetilde{u}_0)$. For this reason, it follows that $t_{\lambda_n}\frac{d}{dt} \widetilde{J}_{\lambda_n} (t\widetilde{u}_0) \Big|_{t=t_{\lambda_n}} = \langle \widetilde{J}_{\lambda_n}' (t_{\lambda_n}\widetilde{u}_0), t_{\lambda_n}\widetilde{u}_0\rangle = 0$, namely, 
\begin{align}\label{e5.46}
\notag & \int_\Omega |\Delta (t_{\lambda_n}\widetilde{u}_0)|^{p(x)}\, \diff x + M_0\left(\int_\Omega \frac{1}{p(x)}|\nabla (t_{\lambda_n}\widetilde{u}_0)|^{p(x)}\, \diff x\right)\int_\Omega |\nabla (t_{\lambda_n}\widetilde{u}_0)|^{p(x)}\, \diff x \\
 & \hspace{1cm} = {\lambda_n} \int_\Omega f(x,t_{\lambda_n}\widetilde{u}_0)t_{\lambda_n} \widetilde{u}_0\, \diff x + \int_\Omega |t_{\lambda_n} \widetilde{u}_0|^{q(x)}\, \diff x.
\end{align}
Thus, by $(\mathcal{F}_4)$ we obtain
\begin{align}\label{e5.47}
\notag \int_\Omega |\Delta (t_{\lambda_n}\widetilde{u}_0)|^{p(x)}\, \diff x & + M_0\left(\int_\Omega \frac{1}{p(x)}|\nabla (t_{\lambda_n}\widetilde{u}_0)|^{p(x)}\, \diff x\right)\int_\Omega |\nabla (t_{\lambda_n}\widetilde{u}_0)|^{p(x)}\, \diff x\\
&\geq \min\left\{t_{\lambda_n}^{q^+},t_{\lambda_n}^{q^-}\right\}\int_\Omega |\widetilde{u}_0|^{q(x)}\, \diff x.
\end{align}
On the other hand, taking into account \eqref{Est.hat.M0}, \eqref{norm_0-modular} and the fact that $\|\widetilde{u}_0\|_0 = 1$, we get
\begin{align}\label{e5.48}
\notag & \int_\Omega |\Delta (t_{\lambda_n}\widetilde{u}_0)|^{p(x)}\, \diff x + M_0\left(\int_\Omega \frac{1}{p(x)}|\nabla (t_{\lambda_n}\widetilde{u}_0)|^{p(x)}\, \diff x\right)\int_\Omega |\nabla (t_{\lambda_n}\widetilde{u}_0)|^{p(x)}\, \diff x \\
\notag & \hspace{1cm} \leq p^+\max\left\{1,M(t_0)\right\} \int_\Omega \frac{1}{p(x)}\left[|\Delta (t_{\lambda_n} \widetilde{u}_0)|^{p(x)}+|\nabla (t_{\lambda_n}\widetilde{u}_0)|^{p(x)}\right]\, \diff x\\
& \hspace{1cm} \leq p^+\max\left\{1,M(t_0)\right\} \max\left\{t_{\lambda_n}^{p^+},t_{\lambda_n}^{p^-}\right\}.
\end{align}
Using \eqref{e5.47} and \eqref{e5.48}, we deduce that the sequence $\{t_{\lambda_n}\}$ is bounded since $p^+ < q^-$. Up to a subsequence, we may assume that $t_{\lambda_n} \to \widetilde{t}_0$ as $n\to \infty$. Moreover, by \eqref{e5.46} and \eqref{e5.48} we have
\begin{equation}\label{eq5.49}
{\lambda_n} \int_\Omega f(x,t_{\lambda_n}\widetilde{u}_0)t_{\lambda_n} \widetilde{u}_0\, \diff x + \int_\Omega |t_{\lambda_n} \widetilde{u}_0|^{q(x)}\, \diff x \leq C, \quad \forall n\in \mathbb{N}.
\end{equation}
If $\widetilde{t}_0 >0$, then it follows from assumption $(\mathcal{F}_4)$ that 
$$
 {\lambda_n} \int_\Omega f(x,t_{\lambda_n}\widetilde{u}_0)t_{\lambda_n} \widetilde{u}_0\, \diff x + \int_\Omega |t_{\lambda_n} \widetilde{u}_0|^{q(x)}\, \diff x \to \infty, \quad n\to \infty,
$$
which contradicts \eqref{eq5.49}. So, we get $\widetilde{t}_0 = 0$, namely,
\begin{equation}\label{lim.t_lambda_n}
\lim_{n\to\infty}\, t_{\lambda_{n}}=0.
\end{equation}

For each $n\in\N$, we consider the path $g_\ast(t) = te_{\lambda_n}$ with  $t \in [0,1]$, where $e_{\lambda_n}$ is taken from the proof of Lemma~\ref{lem5.2}. Clearly, $g_\ast \in G_{\lambda_n}$ and note that, by applying \eqref{e5.44} for $\lambda=\lambda_n$,
$$\max_{t\geq 0} \widetilde{J}_{\lambda_n}(t\widetilde{u}_0)=\max_{t\in [0,t_3(\lambda_n)]} \widetilde{J}_{\lambda_n}(t\widetilde{u}_0)= \max_{t\in [0,1]} \widetilde{J}_{\lambda_n}(te_{\lambda_n})=\max_{t\in[0,1]}\widetilde{J}_{\lambda_n}(g_\ast(t)).$$
Thus, by $(\mathcal{F}_4)$, \eqref{Est.hat.M0} and \eqref{e5.48}, we have the following estimate
\begin{align}\label{e5.51}
\notag 0 < c_{\lambda_n} & = \inf_{g \in G_{\lambda_n}}\max_{t\in [0,1]}\widetilde{J}_{\lambda_n}(g(t)) \\
\notag & \leq \max_{t\in [0,1]} \widetilde{J}_{\lambda_n}(g_\ast(t))=\max_{t\geq 0} \widetilde{J}_{\lambda_n}(t\widetilde{u}_0)= \widetilde{J}_{\lambda_n}(t_{\lambda_n}\widetilde{u}_0) \\
\notag&\leq \max\left\{1,M(t_0)\right\} \int_\Omega \frac{1}{p(x)}\left[|\Delta (t_{\lambda_n} \widetilde{u}_0)|^{p(x)}+|\nabla (t_{\lambda_n}\widetilde{u}_0)|^{p(x)}\right]\, \diff x\\
& \leq \max\left\{1,M(t_0)\right\} \max\left\{t_{\lambda_n}^{p^+},t_{\lambda_n}^{p^-}\right\}.
\end{align}
From \eqref{lim.t_lambda_n} and \eqref{e5.51} we obtain  $\lim_{n \to \infty}c_{\lambda_n} = 0$ and thus, the desired conclusion follows.
\end{proof}

\begin{proof}[\textbf{Proof of Theorem \ref{Theo.sl}}]
By Lemma \ref{lem5.3}, there exists $\widetilde{\lambda}^*>0$ such that 
\begin{equation}\label{e5.52}
c_\lambda < \left(\frac{1}{\theta}-\frac{1}{q^-}\right)S^\frac{N}{2},\quad \forall \lambda \geq \widetilde{\lambda}^*,
\end{equation}
where $c_\lambda$ and $S$ are given by \eqref{c_lambda} and \eqref{S}, respectively. We will show that for each $\lambda \geq \widetilde{\lambda}^*$,   
the modified energy functional $\widetilde{J}_\lambda$ has a nontrivial critical point. To this end, let $\lambda \geq \widetilde{\lambda}^*$. By Lemma~\ref{le.c_lambda}, there exists a sequence $\{u_n\}\subset X$ such that
\begin{equation}\label{e5.53}
\widetilde{J}_\lambda(u_n) \to c_{\lambda}\ (>0)~ \text{ and } ~ \widetilde{J}'_\lambda(u_n) \to 0~ \text{ as } ~ n\to\infty.
\end{equation}
We claim that $\{u_n\}$ is bounded in $X$. 
 Indeed, using \eqref{e5.53}, \eqref{Est.M0}, \eqref{Est.hat.M0} and assumption $(\mathcal{F}_4)$, we have that for all $n\in \mathbb{N}$ large enough,
\begin{align*}
\notag & 1+ c_\lambda + \|u_n\|_0 \\
\notag & \geq \widetilde{J}_\lambda(u_n) - \frac{1}{\theta}\langle \widetilde{J}_\lambda'(u_n), u_n\rangle \\
\notag & \geq \left(\frac{1}{p^+}-\frac{1}{\theta}\right) \int_\Omega |\Delta u_n|^{p(x)}\,\diff x + \widehat{M}_0\left(\int_\Omega \frac{1}{p(x)}|\nabla u_n|^{p(x)}\, \diff x\right) \\
\notag & \hspace{1cm}-\frac{1}{\theta} M_0\bigg(\int_\Omega\frac{1}{p(x)}|\nabla u_n|^{p(x)}\,\diff x\bigg)\int_{\Omega}|\nabla u_n|^{p(x)}\diff x + \left(\frac{1}{\theta} - \frac{1}{q^-}\right)\int_{\Omega}|u_n|^{q(x)}\diff x \\
\notag & \hspace{1cm}+\lambda \int_\Omega \left(\frac{1}{\theta}f(x,u_n)u_n - F(x,u_n)\right)\, \diff x \\
\notag & \geq \left(\frac{1}{p^+}-\frac{1}{\theta}\right) \int_\Omega |\Delta u_n|^{p(x)}\,\diff x + \left(\frac{m_0}{p^+}-\frac{M(t_0)}{\theta}\right)\int_\Omega |\nabla u_n|^{p(x)}\diff x\\
& \geq \widetilde{k}_0 \int_\Omega \frac{1}{p(x)}\left[|\Delta u_n|^{p(x)}+|\nabla u_n|^{p(x)}\right]\,\diff x,
\end{align*}
where $\widetilde{k}_0 :=p^- \min\left\{\left(\frac{1}{p^+}-\frac{1}{\theta}\right), \left(\frac{m_0}{p^+} -\frac{M(t_0)}{\theta} \right) \right\} >0$ (see \eqref{Est.Mt0}). From this and \eqref{norm_0-modular} we deduce that for all $n\in \mathbb{N}$ large enough,
\begin{equation*}
1+ c_\lambda + \|u_n\|_0 \geq  \widetilde{k}_0 \left(\|u_n\|^{p^-}_0-1\right),
\end{equation*}
which yields the boundedness of $\{u_n\}$ in $X$ due to $p^->1$. By the reflexivity of $X$ and Theorem~\ref{Theo.ccp}, up to a subsequence we have
\begin{eqnarray}
u_n(x) &\to& u^\lambda(x)  \quad \text{a.e.} \ \ x\in\Omega,\label{PL.PS2.a.e}\\
u_n &\rightharpoonup& u^\lambda  \quad \text{in} \  X,\label{PL.PS2.w-conv}\\
|\Delta u_n|^{p(\cdot)} &\overset{\ast }{\rightharpoonup }&\mu \geq |\Delta u^\lambda|^{p(\cdot)} + \sum_{i\in I} \mu_i \delta_{x_i} \ \text{in}\  \mathcal{M}(\overline{\Omega}),\label{PL.PS2.mu}\\
|u_n|^{q(\cdot)}&\overset{\ast }{\rightharpoonup }&\nu=|u^\lambda|^{q(\cdot)} + \sum_{i\in I}\nu_i\delta_{x_i} \ \text{in}\ \mathcal{M}(\overline{\Omega}),\label{PL.PS2.nu}\\
S \nu_i^{\frac{1}{p_2^\ast(x_i)}} &\leq& \mu_i^{\frac{1}{p(x_i)}}, \ \forall i\in I.\label{PL.PS2.mu-nu}
\end{eqnarray}
As before, we will show that $I = \emptyset$. Indeed, suppose on the contrary that there exists $i\in I$. By repeating arguments used in the proof Lemma~\ref{Le.PS1} with $u=u^\lambda$ we obtain 
\begin{equation}\label{est.mu_nu2}
\nu_i=\mu_i \geq S^\frac{N}{2}.
\end{equation}
On the other hand we deduce from \eqref{e5.53}, $(\mathcal{F}_4)$, \eqref{Est.M0} and \eqref{Est.hat.M0} that
\begin{align}\label{e5.60}
\notag c_\lambda + o_n(1) & = \widetilde{J}_\lambda(u_n) - \frac{1}{\theta}\langle \widetilde{J}_\lambda'(u_n), u_n \rangle \\
\notag & \geq \left(\frac{1}{p^+}-\frac{1}{\theta}\right) \int_\Omega |\Delta u_n|^{p(x)}\,\diff x + \left(\frac{m_0}{p^+}-\frac{M(t_0)}{\theta}\right)\int_\Omega |\nabla u_n|^{p(x)}\diff x \\
 & \hspace{1cm} + \left(\frac{1}{\theta} - \frac{1}{q^-}\right)\int_{\Omega}|u_n|^{q(x)}\diff x. 
\end{align}
Hence,
\begin{align*}
 c_\lambda + o_n(1) \geq \left(\frac{1}{\theta} - \frac{1}{q^-}\right)\int_{\Omega}|u_n|^{q(x)}\diff x. 
\end{align*}
Now, passing to the limit as $n\to \infty$ in the last inequality and utilizing  \eqref{PL.PS2.nu} and \eqref{est.mu_nu2} we obtain
$$
c_\lambda \geq \left(\frac{1}{\theta} - \frac{1}{q^-}\right)\nu_i\geq \left(\frac{1}{\theta} - \frac{1}{q^-}\right)S^\frac{N}{2},
$$ 
which is in a contradiction with \eqref{e5.52}. Thus, we have proved that $I = \emptyset$. Again,  arguing as in the proof Lemma~\ref{Le.PS1} we infer that $u_n \to u^\lambda$ in $X$ as $n\to \infty$. Hence, by \eqref{e5.53} we get that $u^\lambda$ is a critical point of the modified energy functional $\widetilde{J}_\lambda$. Moreover, $u^\lambda$ is nontrivial since $\widetilde{J}_\lambda(u^\lambda)=c_\lambda>0$.

In order to finish the proof of Theorem \ref{Theo.sl}, we will show that there exists $\lambda^\ast \geq \widetilde{\lambda}^*$ such that for each $\lambda \geq \lambda^\ast$, the above $u^\lambda$ is also a nontrivial solution to problem \eqref{e1.1}. Indeed, by \eqref{e5.60} again we have 
\begin{align*}
c_{\lambda} + o_n(1) &\geq \left(\frac{1}{p^+}-\frac{1}{\theta}\right) \int_\Omega |\Delta u_n|^{p(x)}\,\diff x + \left(\frac{m_0}{p^+}-\frac{M(t_0)}{\theta}\right)\int_\Omega |\nabla u_n|^{p(x)}\diff x\\
&\geq \widetilde{k}_0 \int_\Omega \frac{1}{p(x)}\left[|\Delta u_n|^{p(x)}+ |\nabla u_n|^{p(x)}\right]\diff x,
\end{align*}
where $\widetilde{k}_0 =p^- \min\left\{\left(\frac{1}{p^+}-\frac{1}{\theta}\right), \left(\frac{m_0}{p^+} -\frac{M(t_0)}{\theta} \right) \right\} >0$. Passing to the limit as $n\to \infty$ in the last inequality we arrive at
\begin{equation}
\notag c_{\lambda} \geq  \widetilde{k}_0 \int_\Omega \frac{1}{p(x)}\left[|\Delta u^\lambda|^{p(x)}+ |\nabla u^\lambda|^{p(x)}\right]\diff x
\end{equation}
and thus,
\begin{equation}\label{e5.61}
 \int_\Omega \frac{1}{p(x)}\left[|\Delta u^\lambda|^{p(x)}+ |\nabla u^\lambda|^{p(x)}\right]\diff x\leq   \widetilde{k}_0^{-1}c_{\lambda}
\end{equation}
From this and Lemma~\ref{lem5.3}, we find $\lambda^\ast \geq \widetilde{\lambda}^*$ such that 
\begin{equation}
\notag \int_\Omega \frac{1}{p(x)}\left[|\Delta u^\lambda|^{p(x)}+ |\nabla u^\lambda|^{p(x)}\right]\diff x\leq t_0,\quad \forall   \lambda \geq \lambda^\ast.
\end{equation}
This means that $u^\lambda$ is also a nontrivial solution to problem \eqref{e1.1} provided $\lambda \geq \lambda^\ast$. Moreover, \eqref{e5.61} also implies that $\lim_{\lambda \to \infty}\|u^\lambda\|_0 =0$ in view of Lemma~\ref{lem5.3} and the relation \eqref{norm_0-modular}. By the equivalence of two norms $\|\cdot\|_0$ and $\|\cdot\|$ in $X$, we complete the proof of Theorem \ref{Theo.sl}.
\end{proof}

\vspace{0.5cm}
\noindent{\bf Acknowledgment}\\

The second author was supported by University of Economics Ho Chi Minh City, Vietnam.

\medskip

\end{document}